\newcommand{\real}{{\mathbb{R}}}
\newcommand{\argmin}{\operatorname{argmin}}
\newcommand{\Tr}{\mathrm{Tr}}
\newtheorem{theorem}{Theorem}[section]
\newtheorem{definition}[theorem]{Definition}
\newtheorem{lemma}[theorem]{Lemma}
\newtheorem{corollary}[theorem]{Corollary}
\newcommand{\oprocendsymbol}{\hbox{$\bullet$}}
\newcommand{\oprocend}{\relax\ifmmode\else\unskip\hfill\fi\oprocendsymbol}
\begin{document}

\title{\rule{\textwidth}{0.4mm} {\textsc{Logarithmic Regret in Adaptive Control of Noisy Linear Quadratic Regulator Systems Using Hints}}\\
\rule{\textwidth}{0.4mm}}
\date{}
\maketitle
	
\vspace{-2cm}
\begin{flushright}
{\footnotesize {{\bf Mohammad Akbari}}\footnote{ Department of Mathematics and Statistics at Queen's University, \texttt{13mav1@queensu.ca}. 
}\hspace{1cm}}\\
{\footnotesize {{\bf Bahman Gharesifard}}\footnote{ Department of Electrical and Computer Engineering at University of California, Los Angeles, \texttt{gharesifard@ucla.edu}. 
}\hspace{1cm}}\\
{\footnotesize {{\bf Tamas Linder}}\footnote{ Department of Mathematics and Statistics at Queen's University, \texttt{tamas.linder@queensu.ca}. 
}\hspace{1cm}}
\end{flushright}  
\sloppy

\maketitle

%\title{Logarithmic Regret for Adaptive Control of Linear Quadratic Gaussian Systems using Hints}
%%\title{Adaptive Control of Linear Quadratic Regulators with Hints}
%
%\author{\name Mohammad Akbari \email 13mav1@queensu.ca \\
%       \addr Department of Mathematics and Statistics\\
%       Queen's University, Kingston, ON, Canada
%       \AND
%       \name Bahman Gharesifard \email  gharesifard@ucla.edu\\
%       \addr Department of Electrical and Computer Engineering\\
%       University of California, Los Angeles, CA, USA
%       \AND
%       \name Tamas Linder \email  tamas.linder@queensu.ca\\
%       \addr Department of Mathematics and Statistics\\
%       Queen's University, Kingston, ON, Canada}

%\editor{}

%\maketitle

\begin{abstract}%   <- trailing '%' for backward compatibility of .sty file
The problem of regret minimization for online adaptive control of linear-quadratic systems is studied. In this problem, the true system transition parameters (matrices $A$ and $B$) are unknown, and the objective is to design and analyze algorithms that generate control policies with sublinear regret. Recent studies show that when the system parameters are fully unknown, there exists a choice of these parameters such that any algorithm that only uses data from the past system trajectory at best achieves a square root of time horizon regret bound, providing a hard fundamental limit on the achievable regret in general. However, it is also known that (poly)-logarithmic regret is achievable when only matrix $A$ or only matrix $B$ is unknown. We present a result, encompassing both scenarios, showing that (poly)-logarithmic regret is achievable when both of these matrices are unknown, but a hint is periodically given to the controller. 
\end{abstract}

%\begin{keywords}
  
%\end{keywords}
\section{Introduction}
We consider the class of linear-quadratic-regulator (LQR) systems where vector-valued control states and control actions are coupled in a linear dynamical system with quadratic cost functions. The transition dynamics of the system can be written as 
\[
x_{t+1}=A x_t+B u_t+w_t,
\]
where $x_t\in \real^n$ is the system state, $u_t\in\real^m$ is the control action at time $t\geq 1$, $\{w_t\}_{t\geq 1}$ is an i.i.d Gaussian noise sequence, and $A$ and $B$ are real matrices of appropriate dimensions. At each time $t\geq 1$, there is a quadratic cost $c_t(x_t, u_t)=x_t^\top Q_t x_t+u_t^\top R_t u_t$ associated with the system. It is well-known that the optimal controller which minimizes the expected value of the total costs over time is a linear feedback of the control state that can be derived by solving an algebraic Riccati equation~\cite{DB:2005}. This model, that represents many practical engineering problems~\cite{VVB-LC-GB-VW-VW:96,JML-LFP:12,FR-SC-FQ:13,ZKO-PB-JS:07,VVD-EAL:03}, is well-studied in optimal control theory~\cite{DB:2005,DPB-SES:96}.

A much more challenging problem, which is a standard model in adaptive control theory, is the scenario in which the dynamics of the control system (matrices $A$ and $B$) are \emph{unknown}. Most of the classical literature on this problem have focused on asymptotic results~\cite{TLL-CZW:82, HFC-LG:87, HFC-JFZ:90, MCC-PRK:98, SB-MCC:06}; however, with the advances of machine learning methods in strategic decision making settings~\cite{DS-AH:16}, robotics~\cite{TPL-JJH-AP:15}, and biology~\cite{MM-MSK-AH-SV:18}, this problem has been revisited from a learning-theoretic perspective using new tools from online learning and reinforcement learning~\cite{BR:19}. Here, by \emph{adaptive}, in accordance with the usage in e.g.~\cite{TLL-CZW:82, HFC-LG:87}, we mean the situation where the system parameters are fixed but unknown, whereas the dynamics is time-varying.

%The scenario that the dynamics of the control system (matrices $A$ and $B$) is \emph{unknown}, the subject of study here, is much more challenging and is a standard model in adaptive control theory. Most of the classical literature on adaptive control has focused on the asymptotic results~\cite{TLL-CZW:82, HFC-LG:87, HFC-JFZ:90, MCC-PRK:98,SB-MCC:06}; however, with the success of machine learning methods in strategic decision making settings~\cite{DS-AH:16}, robotics~\cite{TPL-JJH-AP:15}, and biology~\cite{MM-MSK-AH-SV:18}, this problem has been revisited from a learning-theoretic perspective using new tools from online learning and reinforcement learning~\cite{BR:19}.

A standard notion for studying the non-asymptotic performance of algorithms for the adaptive control of LQR systems is the so-called \emph{regret}, which is defined as the difference between the accumulated cost of the controller policy generated by the algorithm and the optimal accumulated cost achievable by a controller that knows the costs and dynamics of the system. The objective here is to design algorithms that achieve sublinear regret. %\barr{Although a sublinear regret demonstrates that the algorithm is asymptotically optimal, the explicit regret bound can be used for non-asymptotic performance comparisons between different algorithms.} Recently, there has been a lot of interest in designing algorithms that are computationally efficient and achieve a low regret. 
To the best of the authors' knowledge, the first result on regret analysis of adaptive control of LQR systems can be found in~\cite{YA-CS:11}. In this work, a so-called "optimism in the face of uncertainty" approach is used to design an online algorithm that achieves a regret bound of $\mathcal{O}(\sqrt{T})$. Subsequently, the dependency on dimensionality was improved by~\cite{ME-AJ-BR:12}. A new algorithm using semi-definite programming relaxation is designed by~\cite{AC-TK-YM:19}, and~\cite{HM-ST-BR:19}. The algorithm employs an $\epsilon$-greedy exploration approach to improve computational efficiency. In another approach to regret minimization for online LQR, the Thompson sampling (TS) method is used~\cite{YO-MG-RJ:17,MA-AL:17,MA-AL:18,TK-SL-KA-AA-BH:22}. In particular, \cite{TK-SL-KA-AA-BH:22} proposed an efficient TS algorithm that attains an $\mathcal{O}(\sqrt{T})$ regret bound without assuming a known stabilizing controller. %\margin{Mention that they prove this is achievable actually, and was is new with respect to~\cite{YA-CS:11} in that sense} 
In a related setting of online control, the linear transition dynamics is assumed to be known but the cost functions are unknown and time-varying~\cite{AC-AH-TK-NL-YM-KT:18, NA-BB-EH-SK-KS:19, NA-EH-KS:19, MA-BG-TL:20-pmlr}. Here the best known regret bound is logarithmic.

%Before stating the contributions of our work here, we review some facts regarding regret minimization in the context of online optimization. The notion of regret is widely used in online optimization, following the original work of~\cite{MZ:03}. It is known that the best regret bound for convex cost functions is $\mathcal{O}(\sqrt{T})$, and $\mathcal{O}(\log(T))$ for strongly convex cost functions, and there is a fundamental limit for the case of linear cost functions; in this case no algorithm can have a regret bound better than $\Omega(\sqrt{T})$~\cite{EH:16}. 
In the converse direction, \cite{MS-DF:20} proved that for any adaptive LQR problem with unknown system parameters $A$ and $B$, and for any algorithm, there is a choice of the system parameters for which the algorithm must suffer a regret at least $\Omega(\sqrt{T})$, providing a general fundamental limit. Moreover, they demonstrated that the square root regret bound with an optimal dependence on the system dimension is achievable. In the positive direction, \cite{AC-AC-TK:2020} showed that a (poly)-logarithmic regret bound is achievable if one makes the extra assumption that either $B$ is known or $A$ is known (along with a minor additional assumption). These results, however, leave open the question whether one can achieve a (poly)-logarithmic regret with milder assumptions. The main objective of this work is to investigate if (poly)-logarithmic regret can be achieved when both of these matrices are unknown, but some information about the system is given to the controller as a \emph{hint}.

%To the best of authors' knowledge, a first attempt to design and study the non-asymptotic performance of algorithms for adaptive control of LQR systems using the tools from learning theory was by \cite{YA-CS:11}. In their study, a so-called \emph{regret} is defined as the difference between the accumulated cost of the controller policy generated by algorithm and the optimal accumulated cost which is achievable by a controller who knows the costs and dynamics of the system. The objective here is to design algorithms that achieves a sublinear regret. They designed a new algorithm and achieved a $\mathcal{O}(\sqrt{T})$ regret bound. This work has initiated major developments in studying the regret minimization for adaptive control problems, including the works of~\cite{ME-AJ-BR:12, MKSF-AT-GM:17, YAY-NL-CS:19, SD-HM-NM-BR-ST:18, HM-ST-BR:19, AC-TK-YM:19}.

\textbf{Contribution:} 
We consider adaptive linear quadratic regulator control in a scenario where the true system parameters of the transition dynamics (matrices $A$ and $B$ are \emph{unknown}, but a hint about the matrix $B$ (or $A$) is given to the controller periodically. This hint, which can be viewed as a noisy directional information pointing toward $B$ (or $A$), will help the controller to achieve logarithmic regret, even though it does not know the true system parameters $A$ and $B$. This extra directional information is in the same spirit as the notion of hint by~\cite{OD-AF-NH-PJ:17} for online optimization. Our algorithm, which uses a regularized least squared error estimate, is adopted from the work of~\cite{AC-AC-TK:2020}. However, the analysis of our algorithm with the hint is naturally more complicated. We also prove that the results of~\cite{AC-AC-TK:2020}, where one of the system parameters, $A$ or $B$, is known, can be obtained from our setting.

\textbf{Mathematical Notation:}
We denote the set of real numbers by $\mathbb{R}$. We use $\|\cdot\|$ and $\|\cdot\|_F$ to denote the 2-norm and the Frobenius norm, respectively. For a matrix $A$, we use $A^\top$ and $\Tr(A)$ to denote the transpose and trace of $A$, respectively. We let $I_n$ denote the $n\times n$ identity matrix. The Gaussian distribution with mean vector $s\in\real^d$ and covariance matrix $R\in\real^{d\times d}$ is denoted by $\mathcal{N}(s, R)$. For a matrix $A$, $\rho(A)$ denotes its spectral radius. We use the notation $A\succeq 0$ ($A\succ 0$) to indicate that $A$ is positive semi-definite (positive definite). We also use $A\succeq B$ ($A\succ B$) to indicate that $A-B$ is positive semi-definite (positive definite). The indicator function of event $\mathcal{E}$ is denoted by $\mathbf{1}\{\mathcal{E}\}$. We use $\mathrm{poly}(z)$ to denote a polynomial function of variable $z$.

\section{Background and Problem Statement}
Here, we review the problem of the adaptive control of discrete-time LQR systems. We describe the LQR problem first.
\subsection{Discrete-Time Linear Quadratic Regulator Control}

The discrete-time control of a LQR system is a classical problem in optimal control theory. This problem is modelled as follows. Let $x_t\in \real^n$ be the system state and let $u_t\in\real^m$ be the control action at time $t\geq 1$ with initial state $x_1$. The system states evolve over time according to the difference equation
\begin{equation}\label{eq102}
x_{t+1}=A_* x_{t}+B_*u_{t}+w_t,
\end{equation}
where $A_*\in\real^{n\times n}$, $B_*\in\real^{n\times m}$ are the state-state and state-action real matrices, respectively, and $\{w_t\}_{t\geq 1}$ is an i.i.d Gaussian noise sequence ($ w_t\sim\mathcal{N}(0,\sigma^2 I_n)$). At each time step $t$, there is a cost of the form $x_t^\top Q x_t+u_t^\top R u_t$, where $Q\in\real^{n\times n}$ and $R\in\real^{m\times m}$ are positive definite matrices. The objective is to find the optimal controller that minimizes the infinite horizon cost
\begin{equation}
J(\{u_t\}_{t\geq 1})=\lim_{T\rightarrow\infty}\mathbb{E}\Big[\frac{1}{T}\sum_{t=1}^T x_t^\top Q x_t+u_t^\top R u_t\Big],
\end{equation}
whenever the limit exists.

It can be proved (see for instance~\cite{DPB-SES:96}) that the optimal controller is a linear feedback of the state; i.e., $u^*_t=K_* x_t$, where $K_*\in\real^{m\times n}$ is the so-called optimal feedback gain matrix given by
\begin{equation}
K_*=-(B_*^\top P_* B_*+R)^{-1}B_*^\top P_* A_*\label{dare},
\end{equation}
where $P_*\in \real^{n\times n}$ is the solution to the algebraic Riccati equation
\begin{equation}
P_*=A_*^\top P_* A_*-A_*^\top P_* B_* (B_*^\top P_* B_* + R)^{-1}B_*^\top P_* A_* + Q,
\end{equation}
whenever the assumptions that $(A_*,B_*)$ is stabilizable and $(A_*, Q^{1/2})$ is detectable, are satisfied. Furthermore, the linear feedback gain $K_*$ is stabilizing (i.e., $(A_*+B_* K_*)$ is stable: $\rho(A_*+B_* K_*)< 1$), and the optimal infinite horizon cost can be shown to be
\begin{equation}
J_*=\min_{\{u_t\}_{t\geq 1}} J = \sigma^2 \Tr(P_*).
\end{equation}
We use the shorthand notation $K_*=\mathrm{dare}(A_*,B_*,Q,R)$ for the controller given by \eqref{dare}.

\subsection{Adaptive Control}
In adaptive control, the system parameters $(A_*,B_*)$ are not known, and the controller uses the past history of data $\{x_t, u_t\}_{t\geq 1}$ to update its policy. In this setting, although the optimal controller is potentially not achievable, we are interested in designing a controller that performs well in comparison to the optimal infinite horizon cost. It is by now customary to define a notion of {\it regret} as the difference between the cumulative cost over time when the controller uses policy $u_t=\pi_t(x_t)$ and the optimal infinite horizon cost, given by
\begin{equation}
\mathcal{R}_T(\pi)=\sum_{t=1}^T (x_t^\top Q x_t+u_t^\top R u_t)-TJ_*,
\end{equation}
where $\pi=(\pi_1, \pi_2,\ldots,\pi_T)$ and $\pi_t$ depends on the history of the data $\{x_s, u_s\}_{s=1}^{t-1}$. 

The objective here is to design an algorithm that generates a policy $\pi$ with a low regret.

\section{Main Result}
%Let us start this section by describing existing results related to the problem we have just described. To motivate our approach, rather than discussing these results in a historical order, we find it beneficial to start with the state-of-art. In a recent work~\cite{MS-DF:20}, it has been shown that, under the general setting considered above, for \emph{any} algorithm, there exists a problem instance for which the algorithm suffers regret of at least $\Omega(\sqrt{T})$. This result demonstrates a bottleneck in obtaining logarithmic regret rates. On the other hand, it is shown that when either $B_*$ is known or $A_*$ is known and $K_* K_*^\top\succ 0$, there exists an algorithm that can achieve a $\mathcal{O}(\log^2 T)$ regret. One hence naturally wonders about the required additional information for obtaining logarithmic rates when both $A_*$ and $B_*$ are unknown. At this point, we recall that such bottlenecks for achieving logarithmic regrets are well-understood in the context of online optimization; in particular, the by now classical work~\cite{OS:15}, demonstrates that no algorithm achieves a logarithmic regret rates for online optimization with linear objective functions. In a follow-up work~\cite{OD-AF-NH-PJ:17}, however, it was proved that with some additional information, in the form of ``hints'', one can obtain logarithmic regret rates. Our objective in this paper is to explore a similar approach. In particular, 
We investigate the setting where, $B_*$ and $A_*$ are not known, but a hint about one of them is provided periodically, allowing the controller to generate better estimates. This hint, which includes some noisy information about the direction towards $B_*$ or $A_*$, will help the controller to achieve logarithmic regret. We present our algorithm with hint on $B_*$ in Section~\ref{sec:Bhint}, and a similar algorithm with hint on $A_*$ in Section~\ref{sec:Ahint}. We now specify our assumptions. 

%\begin{assumption}
We assume that the matrices $Q$ and $R$ are known to the controller and their eigenvalues are bounded by constants $\alpha_0$, and $\alpha_1>0$, i.e.,
\begin{align*}
\alpha_0 I_n \preceq Q,R \preceq \alpha_1 I_m.
\end{align*}
We also assume that the controller knows a stabilizing feedback gain $K_0$ with finite cost such that $J(K_0)\leq \nu$ for some $\nu>0$ and hence $J_*\leq \nu$.
%\end{assumption}
We also define $\phi$ as
\begin{equation*}
\phi=\max(\|A_*\|_F, \ \|B_*\|_F).
\end{equation*}
All these assumptions are standard and widely used in this setting, see~\cite{AC-AC-TK:2020}.

Next, we recall the notion of strong stability of feedback gain which is necessary in the computation in our results.

\begin{definition}
A feedback gain $K$ is $(k,\ell)$-strongly stabilizing for the system $(A,B)$ if $\|K\|\leq k$ and there exist $n\times n$ real matrices $H\succ 0$ and $L$ such that $A+BK=HLH^{-1}$ and $\|L\|\leq 1-\ell$ and $\|H\|\|H^{-1}\|\leq k$.
\end{definition}

\section{Hint for $B_*$}\label{sec:Bhint}
In this section, we present our (randomized) algorithm for the setting where a hint for $B_*$ is given to the controller. The algorithm starts with a warm-up stage where the controller uses the policy $u_t=K_0 x_t+ \eta_t$ for $1\leq t< \tau_1$, where $K_0$ is a $(k,\ell)$ strongly stabilizing feedback gain (with $k$ and $\ell$ determined by the assumption $J(K_0)\leq \nu$, see Lemma~\ref{Lem41}), $\eta_t$ is i.i.d. Gaussian perturbation ($ \eta_t\sim\mathcal{N}(0,\sigma^2 I)$), and $\tau_1$ is the length of the warm-up. The perturbation $\eta_t$ ensures with high probability the persistency of excitation of the controller~\cite{JCW-PR-IM-BLMD:05}. This notion is classically being used for the purpose of identification, see~\cite{RC:87,MG-JBM:86}. %\margin{You need to describe where this data comes from. What is the length of warm-up?} 

The data $\{x_t, u_t\}_{t=1}^{\tau_1-1}$ collected after the warm-up period is used to obtain an estimate of $(A_*,B_*)$ using regularized least squares. Let us call this estimate $(\widehat{A}_{\tau_1},\widehat{B}_{\tau_1})$. After this step, an external ``hint'', to be made precise shortly, is given to the controller that improves the estimate of $(A_*, B_*)$. This new estimate will be denoted by $(A_{\tau_1},B_{\tau_1})$. The controller uses this estimate to find a feedback gain $K_{\tau_1}$ obtained by solving the discrete algebraic Riccati equation, and implements the control input $u_t=K_{\tau_1} x_t$ for $\tau_1 \leq t <\tau_2$ with the condition that the system state remains bounded by a given constant. Otherwise, the algorithm aborts and the controller uses the policy $u_t=K_0 x_t$ until $t=T$. This process will be repeated for each time period of length $\tau_i$, and after period $\tau_i$, the controller achieves a better feedback gain $K_{\tau_i}$. We will show that the generated feedback gain results in a logarithmic regret bound. 

We now provide a description of the notion of hint and how it is used by the controller; the idea of hint used here is similar to the one in~\cite{OD-AF-NH-PJ:17}.
We assume that there is an external hint for $B_*$ given to the controller periodically. In particular, we assume that after the warm-up, the controller has the estimate $\widehat{B}_{\tau_1}$ of $B_*$. The hint is given as $\gamma_1(B_*-\widehat{B}_{\tau_1})+E_1$, where $\gamma_1 \in (0,1)$ and $E_1\in\real^{n\times m}$ are not known to the controller. The controller updates its estimate as 
\begin{equation}\label{Hhint}
B_{\tau_1}=\widehat{B}_{\tau_1}+\gamma_1(B_*-\widehat{B}_{\tau_1})+E_1.
\end{equation}
Using the new estimate $B_{\tau_1}$, the controller updates $A_{\tau_1}$ as a new estimate of $A_*$ and uses the algebraic Riccati equation to compute a new feedback gain $K_{\tau_1}$. Then the controller uses the policy $u_t=K_{\tau_1}x_t$ to control the system. We assume that at each round of estimation $\tau_i$, after making the estimate $\widehat{A}_{\tau_i}$ and $\widehat{B}_{\tau_i}$, the controller receives the hint $\gamma_i(B_*-\widehat{B}_{\tau_i})+E_i$ and revises $B_{\tau_i}=\widehat{B}_{\tau_i}+\gamma_i(B_*-\widehat{B}_{\tau_i})+E_i$. The conditions on $\gamma_i$ and $E_i$ will be provided in the main result.
%We assume that there is an external hint of $B_*$ that is given to the \barr{controller} at the time steps that the \barr{controller} obtains an estimation of $A_*$ and $B_*$. Assume after the warm-up, the learner has the estimation $\widehat{B}_{\tau_1}$ of $B_*$. The hint is given as $\kappa_1(B_*-\widehat{B}_{\tau_1})$, where $0<\kappa_1<1$ is not known to the learner. The learner updates its estimate by $B_{\tau_1}=\widehat{B}_{\tau_1}+\kappa_1(B_*-\widehat{B}_{\tau_1})$. This makes the estimation get closer to $B_*$ by a factor of $(1-\kappa_1)$. At the next round, the learner wishes to get closer to $B_*$, and it can be achieved by giving the learner the hint as $\kappa_2(B_*-B_{\tau_1})$ which results in $B_{\tau_2}=B_{\tau_1}+\kappa_2(B_*-B_{\tau_1})=(1-\kappa_1)(1-\kappa_2) \widehat{B}_{\tau_1}+(1-(1-\kappa_1)(1-\kappa_2)) B_*$, however, the learner uses new data as the form of $\widehat{B}_{\tau_2}$, and hence at each time $t$, the update estimate is given by $B_{\tau_i}=\prod_{j=1}^i(1-\kappa_j)\widehat{B}_{\tau_i}+(1-\prod_{j=1}^i(1-\kappa_j)) B_*$.

\begin{algorithm}
	\caption{\textbf{Online Adaptive Control with Hint for $B_*$}}
	\label{alg}
\begin{algorithmic}[1]
  \REQUIRE a stabilizing controller $K_0$, time horizon $T$, time window parameter $r$, $\tau_1$, $k$, $x_b$, $\lambda$
  
  \STATE \textbf{Initialize} $n_T=\lfloor\log_r(T/\tau_1)\rfloor \qquad \tau_{n_{T+1}}=T+1$ 
  \STATE set $\tau_i=\tau_1 r^{2(i-1)}$  for all $i=1,\ldots,n_T$
  \FOR{each $t=1,\ldots,\tau_1-1$}
  	\STATE receive $x_t$
  	\STATE use controller $u_t=K_0 x_t+\eta_t$ 
  \ENDFOR	
  \FOR{each $i=1,2,\ldots,n_T$}
  	\STATE $(\widehat{A}_{\tau_i} \ \widehat{B}_{\tau_i})=\argmin_{(A \ B)}\sum_{t=1}^{\tau_i-1}\|x_{t+1}-Ax_t-Bu_t\|^2+\lambda\|(A \ B)\|_F^2$
  	\STATE receive hint and update $B_{\tau_i}=\widehat{B}_{\tau_i}-\gamma_i(\widehat{B}_{\tau_i}-B_*)+E_i$
  	\STATE update $A_{\tau_i}=\argmin_{A}\sum_{t=1}^{\tau_i-1}\|x_{t+1}-Ax_{t}-B_{\tau_i}u_t\|^2+\lambda\|A\|_F^2$
  	\STATE $K_{\tau_i}=\mathrm{dare}(A_{\tau_i},B_{\tau_i},Q,R)$
  \FOR{each $t=\tau_i,\ldots,\tau_{i+1}-1$}
  	\IF{$\|x_t\|^2>x_b$ or $\|K_{\tau_i}\|>k$ then} 
  		\STATE abort and play $u_t=K_0 x_t$ until $t=T$
  	\ELSE
  	\STATE play $u_t=K_{\tau_i}x_t$
  	\ENDIF
  \ENDFOR
  \ENDFOR	
\end{algorithmic}
\end{algorithm}

Our main result in this section shows that if partial information in this form about the matrix $B_*$ is provided to the controller periodically, it can achieve a logarithmic regret.

\begin{theorem}\label{mthm}
In Algorithm~\ref{alg}, let
\begin{align*}
k=\sqrt{\frac{\nu+\epsilon_0^2 C_0}{\alpha_0\sigma^2}}, \tau_1=\left\lceil\frac{240\lambda(1+\phi^2)((1+k^2)/\min\{p,1\}+1)(n+m)}{\epsilon_0^2\sigma^2}\right\rceil\\
x_b=135 n k^2 \sigma^2 \max\Big\{(1+\phi)^2k^6,4k^6\Big\}\log(4T), \lambda=(1+k)^2 x_b, p=\frac{r^2}{2+k^2},
\end{align*}
and assume $0\leq 1-\gamma_1\leq \frac{1}{r^2}$, and $\gamma_i$ satisfies $(1-\gamma_{i+1})\leq \frac{1}{r^2}(1-\gamma_i)$, and $\|E_i\|^2_F\leq \frac{1-\gamma_i}{\tau_i}$. Then for $T\geq \mathrm{poly}(\alpha_0, \alpha_1, \phi, \nu, m, n , r)$ we have \[\mathbb{E}[\mathcal{R}_T]\leq \mathrm{poly} (\alpha_0, \alpha_1, \phi, \nu, m, n , r)\log^2(T).\] In particular,
\begingroup
\allowdisplaybreaks
\begin{align}
\mathbb{E}[\mathcal{R}_T]\leq\,& \frac{135}{\log(r)}\Big((r-1)C_0 (240(1+k^2)(1+\phi^2)\big(\frac{1+k^2}{\min\{p,1\}}+1\big)(n+m)+\sigma^2)\nonumber\\
\,&+4 \alpha_1 k^6\sigma^2\Big)nk^2\max\{(1+\phi)^2k^6,4k^6\}\log^2(T)\nonumber\\
\,&+135\frac{240(1+k^2)(1+\phi^2)((1+k^2)/\min\{p,1\}+1)(n+m)+\sigma^2}{\epsilon_0^2}\nonumber\\
\,&\qquad nk^2\max\{(1+\phi)^2k^6,4k^6\}(1+\phi^2)\nu\log(4T) \nonumber\\
\,&+\frac{270}{\log(r)}\Big((r-1)C_0 (240(1+k^2)(1+\phi^2)(\frac{1+k^2}{\min\{p,1\}}+1)(n+m)+\sigma^2)\nonumber\\
\,&\qquad +4 \alpha_1 k^6\sigma^2\Big)nk^2\max\{(1+\phi)^2k^6,4k^6\}\log(T)\nonumber\\
\,&+(\nu+270\alpha_1 nk^4\sigma^2\max\{(1+\phi)^2k^6,4k^6\}\log(4T))T^{-1}\nonumber\\
\,&+1080\alpha_1 (1+8\phi^2)nk^{10}\sigma^2\max\{(1+\phi)^2k^6,4k^6\}\log(4T) T^{-2}.\nonumber
\end{align}
\endgroup
\end{theorem}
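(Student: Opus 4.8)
The plan is to bound the expected regret by decomposing it according to the epoch structure of Algorithm~\ref{alg} and controlling, epoch by epoch, the suboptimality of the feedback gain $K_{\tau_i}$ relative to $K_*$. First I would write $\mathcal{R}_T$ as the sum of (i) the cost incurred during the warm-up interval $1\le t<\tau_1$, and (ii) the cost incurred over each subsequent interval $\tau_i\le t<\tau_{i+1}$ while playing $u_t=K_{\tau_i}x_t$, each further split according to whether the abort condition $\{\|x_t\|^2>x_b \text{ or } \|K_{\tau_i}\|>k\}$ is triggered. Since $\tau_1$ depends only on the problem constants and not on $T$, the warm-up contributes a lower-order term; the dominant contribution comes from (ii) on the non-abort event. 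The strong stability supplied by Lemma~\ref{Lem41} is what lets me treat the closed loop $A_*+B_*K_{\tau_i}$ uniformly.

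On the non-abort event the standard LQR cost-difference estimate gives $J(K_{\tau_i})-J_*\le C\|K_{\tau_i}-K_*\|^2$ for a constant $C$ depending on the problem parameters, valid once $K_{\tau_i}$ is close enough to $K_*$ (this is why the statement requires $T\ge\mathrm{poly}(\cdots)$); combined with the Lipschitz dependence of the $\mathrm{dare}$ map on its matrix arguments, this yields $J(K_{\tau_i})-J_*\le C'\big(\|A_{\tau_i}-A_*\|_F^2+\|B_{\tau_i}-B_*\|_F^2\big)$. Multiplying by the epoch length $\tau_{i+1}-\tau_i=(r^2-1)\tau_i$ and summing over the $n_T=\lfloor\log_r(T/\tau_1)\rfloor=O(\log T)$ epochs reduces everything to bounding the per-epoch estimation error $\|A_{\tau_i}-A_*\|_F^2+\|B_{\tau_i}-B_*\|_F^2$.

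The heart of the argument is showing that this estimation error is of order $\log(T)/\tau_i$, which is where the hint enters. For $B$, the update \eqref{Hhint} gives $B_{\tau_i}-B_*=(1-\gamma_i)(\widehat{B}_{\tau_i}-B_*)+E_i$; since the raw least-squares error $\|\widehat{B}_{\tau_i}-B_*\|_F$ does not decay in the two-unknown regime, the essential point is that the hint multiplies it by $(1-\gamma_i)$, which the hypotheses force to decay geometrically, $(1-\gamma_i)\le r^{-2(i-1)}=\tau_1/\tau_i$, while $\|E_i\|_F^2\le(1-\gamma_i)/\tau_i$. Together with a uniform a~priori bound on $\|\widehat{B}_{\tau_i}-B_*\|_F$ coming from the regularized least-squares step and the boundedness enforced by $x_b$, this gives $\|B_{\tau_i}-B_*\|_F^2=\tilde{O}(1/\tau_i)$. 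For $A$, the refined estimate re-solves least squares with $B_{\tau_i}$ held fixed, so that $x_{t+1}-B_{\tau_i}u_t=A_*x_t+(B_*-B_{\tau_i})u_t+w_t$. I would apply a self-normalized martingale concentration bound to the $w_t$-driven sum and lower-bound $\lambda_{\min}\big(\lambda I+\sum_t x_tx_t^\top\big)$ by a persistence-of-excitation estimate, so that the natural state noise supplies the excitation needed to identify $A$ without any additional exploration after the warm-up; the residual bias $(B_*-B_{\tau_i})u_t$ is controlled by the $B$-error just established. The $\log(4T)$ factors carried by $x_b$ and $\lambda$ are exactly what make the boundedness of the states and the concentration hold with probability $1-O(1/T^2)$, and they produce the per-epoch rate $O(\log T/\tau_i)$.

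Finally I would assemble the pieces: the per-epoch regret is $(r^2-1)\tau_i\cdot C'\cdot O(\log T/\tau_i)=O(\log T)$, and summing over $O(\log T)$ epochs yields the leading $O(\log^2 T)$ term, with the explicit constants of the statement obtained by carrying $k$, $\tau_1$, $x_b$, $\lambda$, $p$ through the bounds rather than absorbing them; the abort event contributes only the $T^{-1}$ and $T^{-2}$ terms, since its probability is $O(1/T^2)$ while the cost on it is bounded through $K_0$ and the quadratic cost structure. I expect the main obstacle to be the coupled estimation step of paragraph three: tracking how the hint-improved but still imperfect $B_{\tau_i}$ propagates into the least-squares estimate of $A$, verifying that the delicately tuned conditions on $\gamma_i$ and $E_i$ matched to the epoch growth rate $r^2$ force the combined error down to the $\tilde{O}(1/\tau_i)$ rate, and keeping all explicit constants aligned so that the precise inequality in the statement holds and not merely its $\mathrm{poly}\cdot\log^2 T$ form.
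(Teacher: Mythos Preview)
Your high-level decomposition (warm-up, per-epoch cost on the good event, abort event) and the way you reduce to per-epoch estimation error via Lemma~\ref{lemm} match the paper exactly; so do the $T^{-1}$, $T^{-2}$ abort terms. Two points of divergence are worth flagging.

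\textbf{The key algebraic device you are missing.} The paper does \emph{not} analyze the $B$- and $A$-errors separately as you propose. Instead, Lemma~\ref{Prop1} shows that the two-stage procedure (joint least squares for $(\widehat A,\widehat B)$, then hint for $B$, then least squares for $A$ with $B_{\tau_i}$ fixed) is algebraically identical to a single regularized least squares with a modified design matrix
\[
\widehat W_{\tau_i}=W_{\tau_i}+\begin{pmatrix}0&0\\0&\frac{\gamma_i}{1-\gamma_i}Y_{\tau_i}\end{pmatrix},
\]
where $Y_{\tau_i}$ is the Schur complement of $W_{\tau_i}$. The hint thus injects a large block into the $uu^\top$ corner, which is precisely the direction that lacks excitation once the exploration noise $\eta_t$ is switched off. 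Lemma~\ref{lem107} then bounds $\Tr(\Delta_{\tau_i}\widehat W_{\tau_i}\Delta_{\tau_i}^\top)$ by a single self-normalized martingale term plus the hint-error term $\frac{3}{1-\gamma_i}\Tr(E_iY_{\tau_i}E_i^\top)$, and the induction in Lemma~\ref{lem210} shows $\widehat W_{\tau_i}\succeq c\,\tau_i I$. Your route---bounding $\|B_{\tau_i}-B_*\|$ via $(1-\gamma_i)\|\widehat B_{\tau_i}-B_*\|+\|E_i\|$ and then propagating the bias $(B_*-B_{\tau_i})u_t$ through the $A$-regression---is plausible for the $\mathrm{poly}\cdot\log^2T$ conclusion, but it requires a separate uniform bound on $\|\widehat B_{\tau_i}-B_*\|$ and a bound on $\|(\sum u_sx_s^\top)V_{\tau_i}^{-1}\|$ across epochs with changing $K_{\tau_j}$; the paper sidesteps both issues with the $\widehat W_{\tau_i}$ identity, and the explicit constants in the stated inequality are computed through that identity, so your approach will not reproduce them as written.

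\textbf{The induction and a small slip.} The circularity you allude to (strong stability of $K_{\tau_i}$ $\Rightarrow$ bounded states $\Rightarrow$ $\log\det W_{\tau_i}\le(n+m)\log T$ $\Rightarrow$ small $\|\Delta_{\tau_i}\|$ $\Rightarrow$ strong stability) is resolved in the paper by an explicit induction on $i$ inside Lemma~\ref{lem210}; you should plan for that rather than treating the non-abort event as given. Also, $\tau_1$ is not independent of $T$: it depends on $\lambda=(1+k)^2x_b$ and $x_b$ carries a $\log(4T)$ factor, so $\tau_1=O(\log T)$. This does not hurt the final bound (the warm-up cost $J_3\le\tau_1(1+\phi^2)\nu$ is still lower order), but it is the source of the $\log(4T)$ term on the fourth line of the explicit bound.
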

%\bah{
%Note that the parameters $\tau_1$, $x_b$, etc.~do not necessarily need to be chosen exactly as given in Theorem~\ref{mthm}. These parameters are set to achieve a poly-logarithmic regret bound in the proof of Lemma~\ref{lem210}. One can achieve similar bounds by slightly different parameters.}

For the special case of $\gamma_i=1$ and $E_i=0$, we recover the following known result as a special case of our main theorem.
\begin{corollary}\cite[Theorem~1]{AC-AC-TK:2020}
There exists an algorithm that given matrix $B_*$ as an input, has an expected regret of $
\mathbb{E}[\mathcal{R}_T]\leq \mathrm{poly} (\alpha_0, \alpha_1, \phi, \nu, m, n , r)\log^2(T)$.
\end{corollary}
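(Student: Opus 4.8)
The plan is to split the horizon into the warm-up window $[1,\tau_1)$ and the $n_T=\lfloor\log_r(T/\tau_1)\rfloor$ epochs $[\tau_i,\tau_{i+1})$, and to reduce the per-epoch regret to the estimation quality of the gain $K_{\tau_i}$. The key reduction is the standard LQR sensitivity fact that the excess cost of a stabilizing gain over $K_*$ is second order in $\|K-K_*\|$. I would make this precise by a value-function decomposition: with $V_*(x)=x^\top P_* x$ and the Riccati identity for $P_*$, the per-step regret $x_t^\top Q x_t+u_t^\top R u_t-J_*$ (for $u_t=K_{\tau_i}x_t$) equals a telescoping difference $V_*(x_t)-V_*(x_{t+1})$, a noise term $w_t^\top P_* w_t$ whose mean $\sigma^2\Tr(P_*)=J_*$ cancels $-J_*$, a martingale term $2w_t^\top P_*(A_*+B_*K_{\tau_i})x_t$ of zero conditional mean, and the quadratic form $\big((K_{\tau_i}-K_*)x_t\big)^\top(R+B_*^\top P_* B_*)\big((K_{\tau_i}-K_*)x_t\big)$. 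Conditioning on $\mathcal{F}_{\tau_i}$, at which $K_{\tau_i}$ is measurable, and using that on the non-abort event $K_{\tau_i}$ is stabilizing so $\mathbb{E}[\|x_t\|^2\mid\mathcal{F}_{\tau_i}]=O(1)$ uniformly, the epoch-$i$ regret is bounded by $(\tau_{i+1}-\tau_i)\,\mathbb{E}\|K_{\tau_i}-K_*\|^2$ up to constants carried by $P_*,R,\phi$.

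The core is then to bound $\mathbb{E}\|K_{\tau_i}-K_*\|^2$. A DARE perturbation (Lipschitz) bound reduces this to $\mathbb{E}\big[\|A_{\tau_i}-A_*\|^2+\|B_{\tau_i}-B_*\|^2\big]$. For the regularized least squares I would use a self-normalized martingale concentration bound giving, with high probability, $\|\widehat A_{\tau_i}-A_*\|^2+\|\widehat B_{\tau_i}-B_*\|^2\lesssim(\lambda\phi^2+\sigma^2(n+m)\log(T/\delta))/\lambda_{\min}(V_{\tau_i})$ with $V_{\tau_i}=\lambda I+\sum_{t<\tau_i}z_tz_t^\top$ and $z_t=(x_t;u_t)$. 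The step that makes logarithmic rather than $\sqrt{T}$ regret possible after active exploration stops is a passive persistency-of-excitation estimate $\lambda_{\min}(V_{\tau_i})\gtrsim\sigma^2\tau_i$ driven purely by the isotropic noise $w_t$; the choice of $\tau_1$ and the warm-up perturbation $\eta_t$ seed this excitation, and with $\lambda=(1+k)^2x_b=O(\log T)$ it gives $\|\widehat A_{\tau_i}-A_*\|^2,\|\widehat B_{\tau_i}-B_*\|^2\lesssim\log(T)/\tau_i$. I then incorporate the hint: from \eqref{Hhint}, $B_{\tau_i}-B_*=(1-\gamma_i)(\widehat B_{\tau_i}-B_*)+E_i$, so the hypotheses $1-\gamma_i\le r^{-2i}$ and $\|E_i\|_F^2\le(1-\gamma_i)/\tau_i$ force $\|B_{\tau_i}-B_*\|^2\lesssim r^{-4i}\log(T)/\tau_i+r^{-2i}/\tau_i$, which is negligible beside the bare $A$-error. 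Since $A_{\tau_i}$ is re-estimated with $B_{\tau_i}$ fixed, I would bound $\|A_{\tau_i}-A_*\|$ by the known-$B$ regression error plus the propagated contribution of $\|(B_{\tau_i}-B_*)u_t\|$, yielding $\mathbb{E}\|A_{\tau_i}-A_*\|^2\lesssim\log(T)/\tau_i$ and hence $\mathbb{E}\|K_{\tau_i}-K_*\|^2\lesssim\log(T)/\tau_i$.

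Assembling, the epoch-$i$ regret is $\lesssim(\tau_{i+1}-\tau_i)\log(T)/\tau_i=(r^2-1)\log(T)$, and summing over the $n_T=O(\log_r T)$ epochs yields the $\tfrac{r-1}{\log r}\log^2(T)$ shape of the stated bound; the warm-up contributes $O(\tau_1\nu)$, which with $\tau_1\sim\lambda(n+m)/(\epsilon_0^2\sigma^2)$ reproduces the $\nu\log(4T)/\epsilon_0^2$ term. It remains to control the abort mechanism: I would show that on a good event the constraints $\|x_t\|^2\le x_b$ and $\|K_{\tau_i}\|\le k$ hold jointly with probability $1-O(1/T)$, using sub-Gaussian tail bounds for $\|x_t\|$ under a strongly stabilizing closed loop (Lemma~\ref{Lem41} supplies the strong-stability constants $k,\ell$ from $J(K_0)\le\nu$) and the DARE perturbation bound for the event $\|K_{\tau_i}\|>k$. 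On the abort event the fallback $K_0$ has per-step cost $\le\nu$, so its expected regret is at most $\nu T$ times the abort probability, which the choice $x_b\propto\log(4T)$ drives down to the displayed $T^{-1}$ and $T^{-2}$ terms.

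I expect the main obstacle to be the passive persistency-of-excitation step, namely establishing $\lambda_{\min}(V_{\tau_i})\gtrsim\sigma^2\tau_i$ from the process noise alone once the exploratory perturbation has ceased: the regressors $z_t$ are generated by the closed loop $u_t=K_{\tau_{i-1}}x_t$, so they are neither independent nor independent of the data-dependent gain, and one must lower-bound the smallest eigenvalue of a dependent empirical Gram matrix while conditioning on the non-abort event. A secondary difficulty is propagating the hint-controlled $B$-error into the re-estimated $A_{\tau_i}$ without degrading the $1/\tau_i$ rate, and keeping the coupled constants $k,\lambda,x_b,p,\phi$ consistent so that the final algebra reproduces the explicit coefficients in the statement.
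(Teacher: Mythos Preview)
The paper's proof of this corollary is one line: set $\gamma_i=1$ and $E_i=0$ in Theorem~\ref{mthm}, so that the hint update \eqref{Hhint} collapses to $B_{\tau_i}=B_*$ and Algorithm~\ref{alg} reduces to the known-$B_*$ algorithm of \cite{AC-AC-TK:2020}. You have instead sketched a from-scratch argument that largely re-derives the machinery behind Theorem~\ref{mthm} itself; that is a legitimate route in principle, but it misses the intended trivial specialization.

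More importantly, your sketch contains a genuine gap at precisely the point you flag as the ``main obstacle''. You claim a passive persistency-of-excitation bound $\lambda_{\min}(V_{\tau_i})\gtrsim\sigma^2\tau_i$ for the full $(n{+}m)$-dimensional Gram matrix $V_{\tau_i}=\lambda I+\sum_{t<\tau_i}z_tz_t^\top$ with $z_t=(x_t;u_t)$. This is not merely technically delicate---it is false once the exploration noise $\eta_t$ is turned off. For $t\ge\tau_1$ the controller plays $u_t=K_{\tau_j}x_t$, so $z_t=\binom{I}{K_{\tau_j}}x_t$ lies in an $n$-dimensional subspace and $\sum_{t\ge\tau_1}z_tz_t^\top$ has rank at most $n<n+m$; the smallest eigenvalue of $V_{\tau_i}$ therefore stays of order $\lambda+\tau_1$ and does \emph{not} grow with $\tau_i$. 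This rank degeneracy is exactly why \cite{MS-DF:20} obtains an $\Omega(\sqrt T)$ lower bound when both $A_*$ and $B_*$ are unknown, and why the paper works not with $W_{\tau_i}$ but with the hint-augmented matrix $\widehat W_{\tau_i}$ of \eqref{what}: the extra block $\frac{\gamma_i}{1-\gamma_i}Y_{\tau_i}$ is what restores growth in the $u$-directions (Lemma~\ref{lem210}, statement~4, proved via Lemma~\ref{lem2}). In the known-$B_*$ corollary the issue disappears for a different reason: one only regresses $A$ on $x_t$, so the relevant Gram matrix is the $n\times n$ block $\sum x_sx_s^\top$, which \emph{does} grow like $\sigma^2\tau_i$ from process noise alone (Lemma~\ref{thm20}). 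Your later sentence about re-estimating $A_{\tau_i}$ with $B_{\tau_i}$ fixed is the right idea, but it cannot be grafted onto the preceding step that bounds the joint $(\widehat A,\widehat B)$ error via the full Gram matrix.
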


The notion of hint introduced above can be simplified to a form where a sequence of estimates $B_{\tau_i}$ is given to the controller such that $\|B_{\tau_i}-B_*\|\leq \frac{\epsilon_0\tau_1}{\tau_i}$. Specifically, for $B_{\tau_i}$ satisfying this bound, we can find $\gamma_i$ and $E_i$ that satisfy the equation $B_{\tau_i}=\widehat{B}_{\tau_i}-\gamma_i(\widehat{B}_{\tau_i}-B_*)+E_i$, where $\widehat{B}_{\tau_i}$ is an estimate of $B_*$ that can be obtained from Line $7$ of Algorithm~\ref{alg} for a sequence of $\{u_t\}_{t\geq 1}$. Using Lemma~\ref{lem210}, we have that $\|\widehat{B}_{\tau_i}-B_*\|\leq \epsilon_0$. Since $\|B_{\tau_i}-B_*\|\leq \frac{\epsilon_0\tau_1}{\tau_i}= \epsilon_0 r^{-2i}$, we have that $1-\gamma_i\leq r^{-2i}$ and we can choose $E_i$ such that $\|E_i\| \leq \frac{1-\gamma_i}{\tau_i}$. Now we can see that $\gamma_i(\widehat{B}_{\tau_i}-B_*)+E_i$ is a hint for the controller, and it achieves a logarithmic regret using Theorem~\ref{mthm}. Thus, the given sequence of estimates $B_{\tau_i}$ is sufficient for a logarithmic regret.

Note that in Algorithm~\ref{alg} we make an initial estimate $\widehat{B}_{\tau_i}$ on Line 7 using a least-square error estimate before receiving the hint. A natural question is whether this estimate is necessary to achieve a logarithmic regret. In particular, one can consider an alternative algorithm that uses an arbitrary estimate $\widehat{B}_{\tau_i}$, receives the hint on the direction towards $B_*$ (Line 8 of Algorithm~\ref{alg}), and updates the estimate $A_{\tau_i}$ using Line 9 of Algorithm~\ref{alg}. Our simulation studies suggest that this algorithm may lead to a similar regret bound. We prove this for the scalar case in Appendix B; however, the analysis in the non-scalar case appears to require a new set of tools as the current proof does not apply. We leave investigating this for the future.

The strong assumptions on the hint ($\gamma_i$ and $E_i$) make the estimate $B_{\tau_i}$ converge to $B_*$ at a speed of $1/{\tau_i}$. This assumption is used to obtain a convergence rate for the sequence $(A_{\tau_i}, B_{\tau_i})$ in Lemma~\ref{lem210}. Note that without the hint, to get better estimates of $A_*$ and $B_*$ over time, the controller needs persistency of excitation after the warm-up stage. This can be obtained by adding a random perturbation to the control. This random perturbation, which results in exploration, adds a cost that scales proportional to the covariance. The trade-off between exploration and exploitation yields a square-root regret~\cite{MS-DF:20}. To obtain a logarithmic regret, the covariance of the random perturbation needs to converge to zero as $1/T$; otherwise, the added cost by the perturbation grows larger than $\log(T)$ and imposes a larger regret; however, the data received after using this perturbation is not informative enough to improve the estimates. Hence, we need to use some extra information that allows for better estimates of $A_*$ and $B_*$, and the hint in Algorithm~\ref{alg} is sufficient for this purpose. Finding other notions of hint that allow for sufficiently good estimates for logarithmic regret is left for future work.
%\begin{corollary}\cite[Theorem~2]{AC-AC-TK:2020}
%Suppose that the optimal policy of the system satisfies $K_*K_*^\top\succeq \mu_* I$. There exists an efficient algorithm that, given matrix $A_*$ as input, has expected regret $\mathbb{E}[\mathcal{R}_T]\leq \mathrm{poly} (\mu_*^{-1},\alpha_0, \alpha_1, \phi, \nu, m, n , r)\log^2(T)$.
%\end{corollary}

The proof of the theorem is organized as follows. We start by reviewing some results from the literature that play major role in proving Theorem~\ref{mthm}. Next, we state our results on how the estimated parameters $(A_{\tau_i} \ B_{\tau_i})$ are related to the history of observed data in Proposition~\ref{Prop1}. We employ Lemma~\ref{thm1} to give a bound for the error of the estimated parameters in Lemma~\ref{lem107}. We use Lemma~\ref{thm20} to obtain a lower bound for the matrix of the observed data that is stated in Lemma~\ref{lem1}. We use these results and Lemma~\ref{thm1} to find a suitable event with high probability on which the difference between $(A_{\tau_i} \ B_{\tau_i})$ and $(A_* \ B_*)$ is small, which allows us to use Lemma~\ref{lemm} to bound the regret. Then we define an event on which the system noise and controller perturbation are bounded and as a result, we have stabilizing controller and bounded states. The rest of the proof bounds the expected regret by conditioning on this event and its complement.

The following two lemmas play an important role in proving our main result. 

\begin{lemma}[\cite{HM-ST-BR:19}]\label{lemm}
There are explicit constant $C_0>0,\epsilon_0=\mathrm{poly}(\alpha_0,\alpha_1,\phi, \nu, n, m)$ such that, for any $\epsilon\in(0,\epsilon_0)$ and matrices $A\in\real^{n\times n}$ and $B\in\real^{n\times m}$ such that $\|A-A_*\|\leq \epsilon$ and $\|B-B_*\|\leq \epsilon$, the policy $K=\mathrm{dare}(A,B,Q,R)$ satisfies 
\begin{equation}
J(K)-J_*\leq C_0 \epsilon^2 \qquad and \qquad \|K-K_*\|\leq C_0 \epsilon.
\end{equation}
\end{lemma}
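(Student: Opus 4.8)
The plan is to reduce the lemma to two quantitative perturbation facts about the map $\mathrm{dare}(\cdot,\cdot,Q,R)$ and the closed-loop cost $J(\cdot)$ (both evaluated on the \emph{true} system $(A_*,B_*)$): first, that the controller map is locally Lipschitz at $(A_*,B_*)$, yielding the linear bound $\|K-K_*\|\leq C_0\epsilon$; and second, that $J$, viewed as a function of the feedback gain, obeys an \emph{exact} quadratic cost-difference identity that upgrades the linear controller bound to the quadratic bound $J(K)-J_*\leq C_0\epsilon^2$. The gap between a linear controller error and a quadratic cost error is the whole content of the lemma, and it reflects the first-order stationarity of $K_*$ as the minimizer of $J$ over stabilizing gains.

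For the first fact, I would treat the stabilizing solution $P$ of the algebraic Riccati equation as an implicit function of $(A,B)$. With the Riccati operator $\mathcal{F}(P,A,B)=A^\top P A-A^\top P B(B^\top P B+R)^{-1}B^\top P A+Q-P$, we have $\mathcal{F}(P_*,A_*,B_*)=0$, and a standard computation shows that the partial Fréchet derivative in $P$ at the stabilizing solution equals the Stein operator $\Delta\mapsto(A_*+B_*K_*)^\top\Delta(A_*+B_*K_*)-\Delta$. Since $\rho(A_*+B_*K_*)<1$, this operator is boundedly invertible, so the implicit function theorem yields a $C^1$ map $(A,B)\mapsto P(A,B)$ on a neighborhood of $(A_*,B_*)$; composing with the smooth map $(A,B,P)\mapsto-(B^\top P B+R)^{-1}B^\top P A$ shows that $\mathrm{dare}(A,B,Q,R)$ is Lipschitz there. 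Restricting to the ball $\|A-A_*\|,\|B-B_*\|\leq\epsilon<\epsilon_0$ and tracking the Lipschitz constant produces $\|K-K_*\|\leq C_0\epsilon$.

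For the second fact, I would invoke the exact LQR cost-difference identity. For any stabilizing gain $K$, the infinite-horizon cost on the true system is $J(K)=\sigma^2\Tr(P_K)$, where $P_K$ solves the Lyapunov equation $P_K=(A_*+B_*K)^\top P_K(A_*+B_*K)+Q+K^\top R K$. The Riccati equation for $P_*$ rewrites as $P_*=(A_*+B_*K_*)^\top P_*(A_*+B_*K_*)+Q+K_*^\top R K_*$, which exhibits $K_*$ as the minimizer of the quadratic $K\mapsto Q+K^\top R K+(A_*+B_*K)^\top P_*(A_*+B_*K)$ with curvature $R+B_*^\top P_* B_*$. Completing the square and telescoping against the stationary state covariance $\Sigma_K=\sigma^2\sum_{t\geq 0}(A_*+B_*K)^t\big((A_*+B_*K)^\top\big)^t$ gives the identity $J(K)-J_*=\Tr\big(\Sigma_K\,(K-K_*)^\top(R+B_*^\top P_* B_*)(K-K_*)\big)$. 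Because $\|\Sigma_K\|$ stays uniformly bounded for $K$ near $K_*$ (the closed-loop keeps a uniform spectral-radius margin by continuity) and $R+B_*^\top P_* B_*$ is bounded, this yields $J(K)-J_*\leq C\|K-K_*\|^2\leq CC_0^2\epsilon^2$, which after renaming constants is the claimed $J(K)-J_*\leq C_0\epsilon^2$.

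The main obstacle, and the reason the statement insists on \emph{explicit} constants with $\epsilon_0=\mathrm{poly}(\alpha_0,\alpha_1,\phi,\nu,n,m)$, is making every estimate above quantitative and uniform over the $\epsilon_0$-ball. Concretely, one must bound $\|P_*\|$ and $\|K_*\|$ (via $J_*=\sigma^2\Tr(P_*)\leq\nu$, the eigenvalue bounds $\alpha_0 I\preceq Q,R\preceq\alpha_1 I$, and $\phi=\max(\|A_*\|_F,\|B_*\|_F)$), then show the stability of $A_*+B_*K_*$ survives the perturbation with a uniform decay margin so that both $\|\Sigma_K\|$ and the norm of the inverse Stein operator remain controlled, and finally choose $\epsilon_0$ polynomially small so the implicit function theorem and the stability margin hold simultaneously across the whole ball. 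This quantitative bookkeeping, rather than any single conceptual step, is the delicate part; the conceptual skeleton (a Lipschitz certainty-equivalent controller composed with a first-order-stationary, hence locally quadratic, cost) is standard.
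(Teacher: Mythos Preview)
The paper does not give its own proof of this lemma; it is quoted verbatim as a known result from Mania, Tu, and Recht~\cite{HM-ST-BR:19}, and is used as a black box throughout. So there is no in-paper argument to compare against.

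That said, your sketch is a faithful outline of the argument in the cited reference. The two pillars you identify---local Lipschitz continuity of $(A,B)\mapsto\mathrm{dare}(A,B,Q,R)$ at $(A_*,B_*)$, and the exact cost-difference identity $J(K)-J_*=\Tr\big(\Sigma_K(K-K_*)^\top(R+B_*^\top P_*B_*)(K-K_*)\big)$ coming from the first-order optimality of $K_*$---are precisely the ingredients used there. Your derivation of the identity via the Lyapunov equation for $P_K-P_*$ is correct. The one place where the cited paper differs in emphasis is the Lipschitz bound on $K$: rather than invoking the implicit function theorem abstractly, they carry out an explicit operator-perturbation argument on the Riccati fixed point (bounding the Stein-operator inverse via the closed-loop stability margin, which in turn is controlled by $\|P_*\|\leq\nu/\sigma^2$), precisely so that $\epsilon_0$ and $C_0$ come out as explicit polynomials in $\alpha_0,\alpha_1,\phi,\nu,n,m$. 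You correctly flag this quantitative bookkeeping as the delicate part; as a proof plan your proposal is sound.
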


This lemma is used to bound the regret, when the estimated matrices $(A_{\tau_i} \ B_{\tau_i})$ are in a small neighbourhood of the true pair $(A_* \ B_*)$. The next lemma is used to get a bound on the difference of $(A_{\tau_i} \ B_{\tau_i})$ and $(A_* \ B_*)$.

\begin{lemma}[\cite{YA-CS:11}]\label{thm1}
Let $\{\mathcal{F}_t\}_{t=0}^\infty$ be a filtration and let $\{\xi_t\}_{t=1}^\infty$ be a real-valued martingale difference sequence adapted to this filtration such that $\xi_t$ is $g$-sub-Gaussian conditioned on $\mathcal{F}_{t-1}$, that is, 
\begin{equation}
\mathbb{E}[e^{\lambda \xi_t}| \mathcal{F}_{t-1}]\leq e^{\lambda^2 g^2/2}, 
\end{equation} 
for all $t\geq 1$. Further, let $\{u_t\}_{t=1}^\infty$ be an $\real^{n}$-valued stochastic process adapted to $\{\mathcal{F}_{t-1}\}_{t=1}^\infty$, let $V\in\real^{n \times n}$ be a positive-definite matrix, and for $t\geq 1$ define 
\begin{equation}
U_t=\sum_{s=1}^{t-1}\xi_s u_s, \qquad V_t=V+\sum_{s=1}^{t-1} u_s u_s^\top.
\end{equation}
Then, for any $\delta\in(0,1)$, we have that with probability at least $1-\delta$, 
\begin{equation}
U_t^\top V_t^{-1} U_t \leq 2 g^2 \log\Big(\frac{1}{\delta}\frac{\det(V_t)}{\det(V)}\Big), \qquad  t\geq 1.
\end{equation}
\end{lemma}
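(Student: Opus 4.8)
The plan is to prove this self-normalized tail bound by the \emph{method of mixtures} (pseudo-maximization), the standard route to such inequalities. The starting point is a one-parameter family of exponential supermartingales indexed by a fixed direction $\lambda\in\real^n$. For each $s$ set $D_s^\lambda=\exp\big(\lambda^\top u_s\,\xi_s-\tfrac{g^2}{2}(\lambda^\top u_s)^2\big)$. Because $u_s$ is $\mathcal{F}_{s-1}$-measurable while $\xi_s$ is conditionally $g$-sub-Gaussian, substituting $\nu=\lambda^\top u_s$ into the hypothesis $\mathbb{E}[e^{\nu\xi_s}\mid\mathcal{F}_{s-1}]\le e^{\nu^2 g^2/2}$ gives $\mathbb{E}[D_s^\lambda\mid\mathcal{F}_{s-1}]\le1$. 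Hence the running products $M_t^\lambda=\prod_{s=1}^{t-1}D_s^\lambda=\exp\big(\lambda^\top U_t-\tfrac{g^2}{2}\lambda^\top(\sum_{s=1}^{t-1}u_s u_s^\top)\lambda\big)$ form a nonnegative supermartingale with $M_1^\lambda=1$ and $\mathbb{E}[M_t^\lambda]\le1$ for every fixed $\lambda$.

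Next I would remove the dependence on a single $\lambda$ by averaging. Equip $\lambda$ with the Gaussian prior $\mathcal{N}(0,(g^2V)^{-1})$, with density $h$, and define the mixture $M_t=\int_{\real^n}M_t^\lambda\,h(\lambda)\,d\lambda$. Since the integrand is nonnegative, Tonelli together with linearity of conditional expectation shows $M_t$ is again a nonnegative supermartingale with $M_1=1$. The advantage is that the Gaussian integral is explicit: completing the square in $\lambda$ and using $g^2\sum_{s=1}^{t-1}u_su_s^\top+g^2V=g^2V_t$ collapses the mixture to $M_t=(\det V/\det V_t)^{1/2}\exp\big(\tfrac{1}{2g^2}U_t^\top V_t^{-1}U_t\big)$, exactly the self-normalized statistic weighted by a determinant ratio. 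This is the step where the prior covariance must be tuned to $(g^2V)^{-1}$, so that the resulting quadratic form and normalizing determinant assemble into $V_t$ and $V$ respectively.

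Finally I would turn supermartingale control into the stated uniform-in-$t$ guarantee. As $M_t$ is a nonnegative supermartingale with $\mathbb{E}[M_1]=1$, Ville's maximal inequality yields $\mathbb{P}(\sup_{t\ge1}M_t\ge1/\delta)\le\delta$; on the complement, $M_t<1/\delta$ holds for all $t$ \emph{simultaneously}. Taking logarithms of $(\det V/\det V_t)^{1/2}\exp(\tfrac{1}{2g^2}U_t^\top V_t^{-1}U_t)<1/\delta$ and rearranging gives $U_t^\top V_t^{-1}U_t\le 2g^2\log(1/\delta)+g^2\log(\det V_t/\det V)$; since $V_t\succeq V\succ0$ forces $\det V_t/\det V\ge1$, this is at most $2g^2\log\big(\tfrac1\delta\det V_t/\det V\big)$, the claimed bound. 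I expect the principal obstacle to be securing \emph{uniformity over all $t\ge1$}: a union bound over $t$ is hopeless, so the argument must be organized around a single supermartingale and one application of the maximal inequality, with additional care needed to justify the measurability and integrability in the mixture step and to keep the filtration indexing straight (that $u_s$ is predictable while $\xi_s$ is only adapted).
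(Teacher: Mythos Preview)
The paper does not supply its own proof of this lemma; it is quoted verbatim as a cited result from Abbasi-Yadkori and Szepesv{\'a}ri, so there is nothing in the present paper to compare against. Your proposal is correct and is precisely the method-of-mixtures argument used in the original reference: build the one-parameter exponential supermartingale $M_t^\lambda$, integrate against a Gaussian prior with precision $g^2V$ to obtain the closed-form mixture $M_t=(\det V/\det V_t)^{1/2}\exp\bigl(\tfrac{1}{2g^2}U_t^\top V_t^{-1}U_t\bigr)$, and then apply Ville's maximal inequality once to get the bound uniformly in $t$. Your final step, bounding $2g^2\log(1/\delta)+g^2\log(\det V_t/\det V)$ by $2g^2\log\bigl(\tfrac{1}{\delta}\det V_t/\det V\bigr)$ via $\det V_t\ge\det V$, is exactly how the slightly loose constant in the stated form arises; the sharper bound you derive along the way is in fact the form the original reference states.
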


Throughout, we denote 
\begin{align}\label{Delta}
\Delta_t=(A_t \ B_t)-(A_* \ B_*)
\end{align}
and $z_s=(x_s^\top \ u_s^\top)^\top$. We also define $W_t\in\real^{(n+m)\times(n+m)}$ for $t\geq 2$ by
\begin{align}
W_{t}=\,&\sum_{s=1}^{t-1}z_s z_s^\top+\lambda I_{n+m} \nonumber\\ =\,&\begin{pmatrix}
\sum_{s=1}^{t-1}x_s x_s^\top \,& \sum_{s=1}^{t-1}x_s u_s^\top \\[10pt]
\sum_{s=1}^{t-1}u_s x_s^\top \,& \sum_{s=1}^{t-1}u_s u_s^\top 
\end{pmatrix}+ \lambda I_{n+m},\label{eq:wt}
\end{align}
and 
\begin{align}
V_{t}=\,&\sum_{s=1}^{t-1}x_s x_s^\top + \lambda I_n \label{eq11},\\
Y_{t}=\,&\sum_{s=1}^{t-1}u_s u_s^\top + \lambda I_m - (\sum_{s=1}^{t-1}u_s x_s^\top) V_{t}^{-1}(\sum_{s=1}^{t-1}x_s u_s^\top). \label{eq:yt}
\end{align}
Finally, we let 
\begin{align}
\widehat{W}_{\tau_i}=\begin{pmatrix}
\sum_{s=1}^{\tau_i-1}x_s x_s^\top \,& \sum_{s=1}^{\tau_i-1} x_s u_s^\top\\[10pt]
\sum_{s=1}^{\tau_i-1}u_s x_s^\top \,& \sum_{s=1}^{\tau_i-1} u_s u_s^\top + \frac{\gamma_i}{1-\gamma_i} Y_{\tau_i}
\end{pmatrix}+ \lambda I_{n+m}.\label{what}
\end{align}

%We also define the scalar $s_i=\prod_{j=1}^i(1-\kappa_j)$.
The next lemma states the relation of $(A_{\tau_i} \ B_{\tau_i})$ with the history of observed data $\{x_s,u_s\}_{s=1}^{\tau_i-1}$ collected as $\widehat{W}_{\tau_i}$. 

\begin{lemma}\label{Prop1}
Let $\{x_t, u_t\}_{t=1}^T$, the sequence of states and actions of the system~\eqref{eq102}, and $(A_{\tau_i} \ B_{\tau_i})$ be generated by Algorithm~\ref{alg}. We have that
\begin{align}\label{eq106}
(A_{\tau_i} \ B_{\tau_i})=(A_* \ B_*)-\lambda (A_* \ B_*)\widehat{W}^{-1}_{\tau_i}+\big(\sum_{s=1}^{\tau_i-1}w_s (x_s^\top \ u_s^\top)\big)\widehat{W}^{-1}_{\tau_i}+\big(0 \ \frac{1}{1-\gamma_i}E_i Y_{\tau_i}\big)\widehat{W}^{-1}_{\tau_i},
\end{align}
where $\widehat{W}_{\tau_i}$ is given in~\eqref{what}.
%where
%\begin{align*}
%\widehat{W}_{\tau_i}=\begin{pmatrix}
%\sum_{s=1}^{\tau_i-1}x_s x_s^\top \,& \sum_{s=1}^{\tau_i-1} x_s u_s^\top\\[10pt]
%\sum_{s=1}^{\tau_i-1}u_s x_s^\top \,& \sum_{s=1}^{\tau_i-1} u_s u_s^\top + \frac{1-(1-\kappa)^i}{(1-\kappa)^i} Y_{\tau_i}
%\end{pmatrix}+ \lambda I
%\end{align*}
%and $Y_{\tau_i}=\sum_{s=1}^{\tau_i-1}u_s u_s^\top + \lambda I - (\sum_{s=1}^{\tau_i-1}u_s x_s^\top) V_{\tau_i}^{-1}(\sum_{s=1}^{\tau_i-1}x_s u_s^\top)$.
\end{lemma}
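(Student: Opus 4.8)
The plan is to avoid computing $\widehat{W}_{\tau_i}^{-1}$ explicitly and instead verify the algebraically equivalent identity obtained by right-multiplying the claim by $\widehat{W}_{\tau_i}$, namely
\[
\Delta_{\tau_i}\widehat{W}_{\tau_i} = -\lambda (A_* \ B_*) + \Big(\sum_{s=1}^{\tau_i-1}w_s (x_s^\top \ u_s^\top)\Big) + \Big(0 \ \tfrac{1}{1-\gamma_i}E_i Y_{\tau_i}\Big),
\]
where $\Delta_{\tau_i}=(A_{\tau_i} \ B_{\tau_i})-(A_* \ B_*)$. Since $\widehat{W}_{\tau_i}$ is block-structured, I would check this column-by-column, treating the width-$n$ and the width-$m$ blocks separately, and invert only once at the end: the Schur complement of $\widehat{W}_{\tau_i}$ with respect to $V_{\tau_i}$ equals $\tfrac{1}{1-\gamma_i}Y_{\tau_i}\succ0$, so $\widehat{W}_{\tau_i}$ is invertible and left-multiplication by $\widehat{W}_{\tau_i}^{-1}$ recovers~\eqref{eq106}.

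First I would write the first-order optimality conditions for the two regularized least-squares problems of Algorithm~\ref{alg}. Setting the gradient of the Line-9 objective to zero and substituting the dynamics $x_{s+1}=(A_* \ B_*)z_s+w_s$ yields, using $V_{\tau_i}=\sum_{s=1}^{\tau_i-1}x_s x_s^\top+\lambda I_n$,
\[
(A_{\tau_i}-A_*)V_{\tau_i} = -\lambda A_* - (B_{\tau_i}-B_*)\sum_{s=1}^{\tau_i-1}u_s x_s^\top + \sum_{s=1}^{\tau_i-1}w_s x_s^\top.
\]
Adding $(B_{\tau_i}-B_*)\sum_{s=1}^{\tau_i-1}u_s x_s^\top$ to both sides gives exactly the width-$n$ block-column $-\lambda A_*+\sum_{s=1}^{\tau_i-1}w_s x_s^\top$ of the target identity, so this block falls out immediately.

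The width-$m$ block-column is where the computation concentrates. Solving the displayed equation for $A_{\tau_i}-A_*$, substituting into the width-$m$ column $(A_{\tau_i}-A_*)\sum_{s=1}^{\tau_i-1}x_s u_s^\top + (B_{\tau_i}-B_*)\big(\sum_{s=1}^{\tau_i-1}u_s u_s^\top + \lambda I_m + \tfrac{\gamma_i}{1-\gamma_i}Y_{\tau_i}\big)$, and recognizing the Schur complement $Y_{\tau_i}=\sum_{s=1}^{\tau_i-1}u_s u_s^\top+\lambda I_m-(\sum_{s=1}^{\tau_i-1}u_s x_s^\top)V_{\tau_i}^{-1}(\sum_{s=1}^{\tau_i-1}x_s u_s^\top)$, the coefficient of $B_{\tau_i}-B_*$ collapses to $Y_{\tau_i}+\tfrac{\gamma_i}{1-\gamma_i}Y_{\tau_i}=\tfrac{1}{1-\gamma_i}Y_{\tau_i}$. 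This cancellation is precisely why the extra block $\tfrac{\gamma_i}{1-\gamma_i}Y_{\tau_i}$ is built into the definition~\eqref{what} of $\widehat{W}_{\tau_i}$.

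The main obstacle is closing the remaining identity, which forces me to expose $\widehat{B}_{\tau_i}-B_*$. I would obtain it from the Line-7 solution $(\widehat{A}_{\tau_i} \ \widehat{B}_{\tau_i})=(A_* \ B_*)-\lambda (A_* \ B_*)W_{\tau_i}^{-1}+(\sum_{s=1}^{\tau_i-1}w_s z_s^\top)W_{\tau_i}^{-1}$ by reading off the width-$m$ block-column of $W_{\tau_i}^{-1}$ through the standard block-inverse (Schur) formula, whose bottom block is $Y_{\tau_i}^{-1}$ and whose top block is $-V_{\tau_i}^{-1}(\sum_{s=1}^{\tau_i-1}x_s u_s^\top)Y_{\tau_i}^{-1}$. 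Right-multiplying the resulting expression for $\widehat{B}_{\tau_i}-B_*$ by $Y_{\tau_i}$ clears every inverse, and substituting the hint relation $B_{\tau_i}-B_*=(1-\gamma_i)(\widehat{B}_{\tau_i}-B_*)+E_i$ (equivalently $\tfrac{1}{1-\gamma_i}(B_{\tau_i}-B_*)=(\widehat{B}_{\tau_i}-B_*)+\tfrac{1}{1-\gamma_i}E_i$) produces exactly $-\lambda B_*+\sum_{s=1}^{\tau_i-1}w_s u_s^\top+\tfrac{1}{1-\gamma_i}E_i Y_{\tau_i}$, with all $A_*$- and noise-cross-terms cancelling in pairs. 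Verifying this termwise cancellation is the only delicate bookkeeping; once it is done, left-multiplying the assembled identity by $\widehat{W}_{\tau_i}^{-1}$ yields~\eqref{eq106}.
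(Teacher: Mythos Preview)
Your proposal is correct. The route, however, is organized differently from the paper's, and the comparison is worth noting.

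The paper proceeds \emph{forward}: it writes $(\widehat{A}_{\tau_i}\ \widehat{B}_{\tau_i})$ via the block inverse~\eqref{Winv} of $W_{\tau_i}$, plugs the hint to obtain $B_{\tau_i}$, substitutes into the Line-9 closed form for $A_{\tau_i}$, and assembles $(A_{\tau_i}\ B_{\tau_i})=(\text{data})\,Z_{\tau_i}$ for an explicit $2\times 2$ block matrix $Z_{\tau_i}$. Only at the end does it identify $Z_{\tau_i}=\widehat{W}_{\tau_i}^{-1}$, by computing the block inverse of $\widehat{W}_{\tau_i}$ and observing that its Schur complement is $\widehat{Y}_{\tau_i}=\tfrac{1}{1-\gamma_i}Y_{\tau_i}$, so that $\widehat{Y}_{\tau_i}^{-1}=(1-\gamma_i)Y_{\tau_i}^{-1}$ matches the entries of $Z_{\tau_i}$. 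You instead work \emph{backward}: you multiply the claimed identity through by $\widehat{W}_{\tau_i}$ and verify it block-column by block-column, never writing $\widehat{W}_{\tau_i}^{-1}$ explicitly. The same Schur-complement collapse $Y_{\tau_i}+\tfrac{\gamma_i}{1-\gamma_i}Y_{\tau_i}=\tfrac{1}{1-\gamma_i}Y_{\tau_i}$ shows up, but as the coefficient of $B_{\tau_i}-B_*$ in your width-$m$ column rather than as the bottom-right entry of a block inverse. Your approach is a bit leaner (one block inverse instead of two, and no auxiliary matrix $Z_{\tau_i}$); the paper's forward computation has the advantage of \emph{discovering} the form of $\widehat{W}_{\tau_i}$ rather than having to guess it in advance, which is presumably how~\eqref{what} was found in the first place.
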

\begin{proof}
Let $(\widehat{A}_{\tau_i},\widehat{B}_{\tau_i})$ be the estimates from Line 7 of the Algorithm~\ref{alg}. We have the following equality,
\begin{align}
(\widehat{A}_{\tau_i} \ \widehat{B}_{\tau_i})=\,&\argmin_{(A \ B)} \sum_{s=1}^{\tau_i-1}\|x_{s+1}-(A \ B)(x_s^\top \ u_s^\top)^\top\|^2+\lambda\|(A \ B)\|_F^2 \nonumber\\
=\,&\sum_{s=1}^{\tau_i-1}(x_{s+1}x_s^\top \ x_{s+1}u_s^\top)W^{-1}_{\tau_i},\label{eq101}
\end{align}
where $W_{\tau_i}$ is given in~\eqref{eq:wt}. 
Using \eqref{eq11}, \eqref{eq:yt}, and block matrix inverse formula~\cite[Theorem 2.1]{TTL-SHS:02}, we can write the inverse of $W_{\tau_i}$ as
%In order to write the inverse of $W_{\tau_i}$ as a block matrix for further computation, we define
%\begin{align*}
%V_{\tau_i}=\,&\sum_{s=1}^{\tau_i-1}x_s x_s^\top + \lambda I_n,\\
%Y_{\tau_i}=\,&\sum_{s=1}^{\tau_i-1}u_s u_s^\top + \lambda I_m - (\sum_{s=1}^{\tau_i-1}u_s x_s^\top) V_{\tau_i}^{-1}(\sum_{s=1}^{\tau_i-1}x_s u_s^\top),
%\end{align*}
%and using block matrix inverse formula~\cite{TTL-SHS:02}, we obtain
\begin{equation}\label{Winv}
W_{\tau_i}^{-1}=\begin{pmatrix}V_{\tau_i}^{-1} + V_{\tau_i}^{-1}(\sum_{s=1}^{\tau_i-1}x_s u_s^\top) Y_{\tau_i}^{-1}(\sum_{s=1}^{\tau_i-1}u_s x_s^\top) V_{\tau_i}^{-1} \,& - V_{\tau_i}^{-1}(\sum_{s=1}^{\tau_i-1}x_s u_s^\top) Y_{\tau_i}^{-1} \\[10pt]
- Y_{\tau_i}^{-1}(\sum_{s=1}^{\tau_i-1}u_s x_s^\top) V_{\tau_i}^{-1} \,& Y_{\tau_i}^{-1}
\end{pmatrix}.
\end{equation}
Using \eqref{eq101}, \eqref{Winv}, and \eqref{Hhint}, we have that
\begin{align}
B_{\tau_i}=\,&(1-\gamma_i)\widehat{B}_{\tau_i}+\gamma_i B_*+E_i\nonumber\\
=\,&-(1-\gamma_i)(\sum_{s=1}^{\tau_i-1}x_{s+1}x_s^\top) V_{\tau_i}^{-1}(\sum_{s=1}^{\tau_i-1}x_s u_s^\top) Y_{\tau_i}^{-1}+(1-\gamma_i)(\sum_{s=1}^{\tau_i-1}x_{s+1}u_s^\top)Y_{\tau_i}^{-1}\nonumber\\
\,&+\gamma_i B_*+E_i\label{Btau}.
\end{align} 
Now for $A_{\tau_i}$, we have
\begin{align}
A_{\tau_i}=\,&\argmin_{A}\sum_{s=1}^{\tau_i-1}\|x_{s+1}-Ax_{s}-B_{\tau_i}u_s\|^2+\lambda\|A\|_F^2\nonumber\\
=\,&\Big(\sum_{s=1}^{\tau_i-1}(x_{s+1}-B_{\tau_i}u_s)x_s^\top\Big)\Big(\sum_{s=1}^{\tau_i-1}x_s x_s^\top+\lambda I_n\Big)^{-1}\nonumber\\
=\,&(\sum_{s=1}^{\tau_i-1}x_{s+1}x_s^\top)V_{\tau_i}^{-1}
-B_{\tau_i}(\sum_{s=1}^{\tau_i-1}u_s x_s^\top)V_{\tau_i}^{-1}\nonumber\\
=\,&(\sum_{s=1}^{\tau_i-1}x_{s+1}x_s^\top)V_{\tau_i}^{-1}+(1-\gamma_i) (\sum_{s=1}^{\tau_i-1}x_{s+1}x_s^\top) V_{\tau_i}^{-1}(\sum_{s=1}^{\tau_i-1}x_s u_s^\top) Y_{\tau_i}^{-1}(\sum_{s=1}^{\tau_i-1}u_s x_s^\top)V_{\tau_i}^{-1}\nonumber\\
\,&-(1-\gamma_i)(\sum_{s=1}^{\tau_i-1}x_{s+1}u_s^\top)Y_{\tau_i}^{-1}(\sum_{s=1}^{\tau_i-1}u_s x_s^\top)V_{\tau_i}^{-1}-(\gamma_i B_*+E_i)(\sum_{s=1}^{\tau_i-1}u_s x_s^\top)V_{\tau_i}^{-1},\label{Atau}
\end{align}
where the first equality follows by the Line 9 of the Algorithm~\ref{alg}, the third equality follows by~\eqref{eq11}, and the last equality follows by plugging in $B_{\tau_i}$ from~\eqref{Btau}. Putting~\eqref{Atau} and~\eqref{Btau} together, we have
\begin{equation}
(A_{\tau_i} \ \ B_{\tau_i})=\Big(\sum_{s=1}^{\tau_i-1}x_{s+1}(x_s^\top \ u_s^\top)\Big)Z_{\tau_i}+\Big(-(\gamma_i B_*+E_i)(\sum_{s=1}^{\tau_i-1}u_s x_s^\top)V_{\tau_i}^{-1} \ \  \gamma_i B_*+E_i\Big),
\end{equation}
where
\begin{equation}
Z_{\tau_i}=\begin{pmatrix}V_{\tau_i}^{-1} + (1-\gamma_i) V_{\tau_i}^{-1}(\sum_{s=1}^{\tau_i-1}x_s u_s^\top) Y_{\tau_i}^{-1}(\sum_{s=1}^{\tau_i-1}u_s x_s^\top) V_{\tau_i}^{-1} \,& - (1-\gamma_i) V_{\tau_i}^{-1}(\sum_{s=1}^{\tau_i-1}x_s u_s^\top) Y_{\tau_i}^{-1} \\[10pt]
- (1-\gamma_i) Y_{\tau_i}^{-1}(\sum_{s=1}^{\tau_i-1}u_s x_s^\top) V_{\tau_i}^{-1} \,& (1-\gamma_i) Y_{\tau_i}^{-1}
\end{pmatrix}.
\end{equation}
After some simplifications, we obtain
\begin{align*}
(A_{\tau_i} \ B_{\tau_i})=\big(\sum_{s=1}^{\tau_i-1}x_{s+1}x_s^\top \ \  \sum_{s=1}^{\tau_i-1}x_{s+1}u_s^\top+\frac{1}{1-\gamma_i}(\gamma_i B_*+E_i) Y_{\tau_i}\big)Z_{\tau_i}.
\end{align*}
Now using \eqref{eq102}, we have
\begin{align}
(A_{\tau_i} \ B_{\tau_i})=\,&\big(\sum_{s=1}^{\tau_i-1}(A_* x_s + B_* u_s + w_s)x_s^\top \ \ \sum_{s=1}^{\tau_i-1}(A_* x_s + B_* u_s + w_s)u_s^\top+\frac{1}{1-\gamma_i}(\gamma_i B_*+E_i) Y_{\tau_i}\big)Z_{\tau_i}\nonumber\\
=\,&(A_* \ B_*)\begin{pmatrix}
\sum_{s=1}^{\tau_i-1}x_s x_s^\top \,& \sum_{s=1}^{\tau_i-1} x_s u_s^\top\\[10pt]
\sum_{s=1}^{\tau_i-1}u_s x_s^\top \,& \sum_{s=1}^{\tau_i-1} u_s u_s^\top + \frac{\gamma_i}{1-\gamma_i} Y_{\tau_i}
\end{pmatrix}Z_{\tau_i}+\big(\sum_{s=1}^{\tau_i-1}w_s (x_s^\top \ u_s^\top)\big)Z_{\tau_i}\nonumber\\[10pt]
\,&+\big(0 \ \frac{1}{1-\gamma_i}E_i Y_{\tau_i}\big)Z_{\tau_i}\label{eq113}.
\end{align}
For further simplifications, let $\widehat{W}_{\tau_i}$ be given by~\eqref{what}, and $\widehat{Y}_{\tau_i}$, the Schur complement of matrix $\widehat{W}_{\tau_i}$, given by
\begin{align}
\widehat{Y}_{\tau_i}=\,&\sum_{s=1}^{\tau_i-1} u_s u_s^\top +\lambda I_m+ \frac{\gamma_i}{1-\gamma_i} Y_{\tau_i}-(\sum_{s=1}^{\tau_i-1}u_s x_s^\top) V_{\tau_i}^{-1}(\sum_{s=1}^{\tau_i-1}x_s u_s^\top)\nonumber\\
=\,&(1+\frac{\gamma_i}{1-\gamma_i})Y_{\tau_i}\nonumber\\
=\,&\frac{1}{1-\gamma_i}Y_{\tau_i}.\label{eq:yhat}
\end{align}
%\begin{equation}
%\widehat{W}_{\tau_i}=\begin{pmatrix}
%\sum_{s=1}^{\tau_i-1}x_s x_s^\top \,& \sum_{s=1}^{\tau_i-1} x_s u_s^\top\\[10pt]
%\sum_{s=1}^{\tau_i-1}u_s x_s^\top \,& \sum_{s=1}^{\tau_i-1} u_s u_s^\top + \frac{\gamma_i}{1-\gamma_i} Y_{\tau_i}
%\end{pmatrix}+ \lambda I_{n+m},
%\end{equation}
Now by computing the inverse of the $\widehat{W}_{\tau_i}$, we obtain
\begin{equation}
\widehat{W}_{\tau_i}^{-1}=\begin{pmatrix}V_{\tau_i}^{-1} + V_{\tau_i}^{-1}(\sum_{s=1}^{\tau_i-1}x_s u_s^\top) \widehat{Y}_{\tau_i}^{-1}(\sum_{s=1}^{\tau_i-1}u_s x_s^\top) V_{\tau_i}^{-1} \,& - V_{\tau_i}^{-1}(\sum_{s=1}^{\tau_i-1}x_s u_s^\top) \widehat{Y}_{\tau_i}^{-1} \\[10pt]
- \widehat{Y}_{\tau_i}^{-1}(\sum_{s=1}^{\tau_i-1}u_s x_s^\top) V_{\tau_i}^{-1} \,& \widehat{Y}_{\tau_i}^{-1}
\end{pmatrix},
\end{equation}
where $V_{\tau_i}$ is given by \eqref{eq11}.
Using this and $\widehat{Y}_{\tau_i}^{-1}=(1-\gamma_i){Y}_{\tau_i}^{-1}$ by~\eqref{eq:yhat}, we obtain
\begin{equation}\label{zwhat}
Z_{\tau_i}=\widehat{W}_{\tau_i}^{-1}.
\end{equation}
Plugging \eqref{zwhat} into \eqref{eq113}, we conclude~\eqref{eq106}.
\end{proof}

\begin{lemma}\label{lem107}
Let $\{x_t, u_t\}_{t=1}^T$ be the sequence of states and actions of the system~\eqref{eq102}, and let $(A_{\tau_i} , B_{\tau_i})$ be the corresponding pair generated by Algorithm~\ref{alg}. Then we have, with probability $1-\delta$,
\begin{align*}
\Tr(\Delta_{\tau_i}\widehat{W}_{\tau_i}\Delta_{\tau_i}^\top)\leq 6 n \sigma^2 \log\Big(\frac{n}{\delta}\frac{\det(W_{\tau_i})}{\det(\lambda I)}\Big)+3\lambda\|(A_\star \ B_\star)\|_F^2+\frac{3}{1-\gamma_i}\Tr(E_i Y_{\tau_i}E_i^\top),
\end{align*}
where $\Delta_{\tau_i}$ is defined by~\eqref{Delta}.
\end{lemma}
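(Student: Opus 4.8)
The plan is to start from the closed-form expression for $(A_{\tau_i} \ B_{\tau_i})$ established in Lemma~\ref{Prop1} and read off
\[
\Delta_{\tau_i} = \underbrace{-\lambda (A_* \ B_*)\widehat{W}_{\tau_i}^{-1}}_{=:T_1} + \underbrace{\Big(\sum_{s=1}^{\tau_i-1}w_s z_s^\top\Big)\widehat{W}_{\tau_i}^{-1}}_{=:T_2} + \underbrace{\Big(0 \ \tfrac{1}{1-\gamma_i}E_i Y_{\tau_i}\Big)\widehat{W}_{\tau_i}^{-1}}_{=:T_3},
\]
where $z_s = (x_s^\top \ u_s^\top)^\top$. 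Since $\widehat{W}_{\tau_i}\succ 0$, the map $(X,Y)\mapsto\Tr(X\widehat{W}_{\tau_i}Y^\top)$ is an inner product, so $X\mapsto\Tr(X\widehat{W}_{\tau_i}X^\top)$ is the square of a norm; applying the triangle inequality followed by $(a+b+c)^2\leq 3(a^2+b^2+c^2)$ gives $\Tr(\Delta_{\tau_i}\widehat{W}_{\tau_i}\Delta_{\tau_i}^\top)\leq 3\sum_{k=1}^3\Tr(T_k\widehat{W}_{\tau_i}T_k^\top)$. The three resulting terms will produce the three terms in the claimed bound, and the bulk of the work lies in the noise term $T_2$.

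For $T_1$, the factors $\widehat{W}_{\tau_i}^{-1}\widehat{W}_{\tau_i}\widehat{W}_{\tau_i}^{-1}$ collapse to $\widehat{W}_{\tau_i}^{-1}$, so $\Tr(T_1\widehat{W}_{\tau_i}T_1^\top)=\lambda^2\Tr\big((A_* \ B_*)\widehat{W}_{\tau_i}^{-1}(A_* \ B_*)^\top\big)$. Because $\widehat{W}_{\tau_i}\succeq\lambda I$ we have $\widehat{W}_{\tau_i}^{-1}\preceq\lambda^{-1}I$, and this term is at most $\lambda\|(A_* \ B_*)\|_F^2$, which after the factor $3$ matches the middle term. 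For $T_3$, the left factor has a zero in its first block, so only the lower-right block of $\widehat{W}_{\tau_i}^{-1}$---namely the inverse Schur complement $\widehat{Y}_{\tau_i}^{-1}=(1-\gamma_i)Y_{\tau_i}^{-1}$ from~\eqref{eq:yhat}---survives. After the same collapse one gets $\Tr(T_3\widehat{W}_{\tau_i}T_3^\top)=\tfrac{1}{(1-\gamma_i)^2}\Tr\big(E_iY_{\tau_i}\widehat{Y}_{\tau_i}^{-1}Y_{\tau_i}E_i^\top\big)=\tfrac{1}{1-\gamma_i}\Tr(E_iY_{\tau_i}E_i^\top)$, giving the last term.

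The main obstacle is the noise term. Collapsing the factors as before, $\Tr(T_2\widehat{W}_{\tau_i}T_2^\top)=\Tr(U_{\tau_i}\widehat{W}_{\tau_i}^{-1}U_{\tau_i}^\top)$ with $U_{\tau_i}=\sum_{s=1}^{\tau_i-1}w_s z_s^\top$. Comparing~\eqref{what} and~\eqref{eq:wt}, the difference $\widehat{W}_{\tau_i}-W_{\tau_i}$ is block-diagonal with lower block $\tfrac{\gamma_i}{1-\gamma_i}Y_{\tau_i}\succeq 0$, hence $\widehat{W}_{\tau_i}^{-1}\preceq W_{\tau_i}^{-1}$ and $\Tr(U_{\tau_i}\widehat{W}_{\tau_i}^{-1}U_{\tau_i}^\top)\leq\Tr(U_{\tau_i}W_{\tau_i}^{-1}U_{\tau_i}^\top)$. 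Writing $w_s=\sum_{j=1}^n w_s^{(j)}e_j$ in the standard basis of $\real^n$ and setting $U_{\tau_i}^{(j)}=\sum_{s=1}^{\tau_i-1}w_s^{(j)}z_s$, the right-hand side factors as $\sum_{j=1}^n (U_{\tau_i}^{(j)})^\top W_{\tau_i}^{-1}U_{\tau_i}^{(j)}$.

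For each fixed $j$ I would invoke Lemma~\ref{thm1} with the filtration $\mathcal{F}_{s-1}=\sigma(x_1,w_1,\dots,w_{s-1},\eta_1,\dots,\eta_s)$, noting that $z_s$ is $\mathcal{F}_{s-1}$-measurable (it is built from $x_s$ and, in the warm-up, the perturbation $\eta_s$) while the scalar $w_s^{(j)}\sim\mathcal{N}(0,\sigma^2)$ is $\sigma$-sub-Gaussian and independent of $\mathcal{F}_{s-1}$. With $V=\lambda I_{n+m}$, so that the lemma's $V_{\tau_i}$ equals $W_{\tau_i}$, this yields $(U_{\tau_i}^{(j)})^\top W_{\tau_i}^{-1}U_{\tau_i}^{(j)}\leq 2\sigma^2\log\big(\tfrac{1}{\delta'}\det(W_{\tau_i})/\det(\lambda I)\big)$ with probability $1-\delta'$. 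Taking $\delta'=\delta/n$ and a union bound over $j=1,\dots,n$ gives $3\Tr(T_2\widehat{W}_{\tau_i}T_2^\top)\leq 6n\sigma^2\log\big(\tfrac{n}{\delta}\det(W_{\tau_i})/\det(\lambda I)\big)$ with probability $1-\delta$, the first term of the bound. Combining the three estimates completes the proof; the only delicate points are the coordinate-wise predictability and sub-Gaussianity bookkeeping needed to apply Lemma~\ref{thm1} and the correct placement of the union-bound factor $n$ inside the logarithm.
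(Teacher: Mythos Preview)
Your proposal is correct and follows essentially the same approach as the paper: both start from the decomposition of $\Delta_{\tau_i}$ given by Lemma~\ref{Prop1}, apply the inequality $(a+b+c)^2\le 3(a^2+b^2+c^2)$ in the $\widehat{W}_{\tau_i}$-weighted norm, simplify each of the three terms via $\widehat{W}_{\tau_i}\succeq\lambda I$, the Schur-complement identity $\widehat{Y}_{\tau_i}^{-1}=(1-\gamma_i)Y_{\tau_i}^{-1}$, and $\widehat{W}_{\tau_i}\succeq W_{\tau_i}$, and then bound the noise term coordinatewise via Lemma~\ref{thm1} with $\delta'=\delta/n$ and a union bound. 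Your write-up is in fact more explicit than the paper's on the $T_3$ block computation and the filtration needed for Lemma~\ref{thm1}, but the argument is the same.
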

\begin{proof}
The proof strategy is similar to \cite[Lemma 6]{AC-AC-TK:2020}. Let
\[
U_{\tau_i}=\sum_{s=1}^{\tau_i-1}w_s (x_s^\top \ \ u_s^\top).
\]
Using Lemma~\ref{Prop1} and by multiplying both side of~\eqref{eq106} by $\widehat{W}_{\tau_i}$ and then multiplying each side by the transpose of corresponding side of~\eqref{eq106}, we have that
\begin{equation}
\Tr(\Delta_{\tau_{i}}\widehat{W}_{\tau_i}\Delta_{\tau_{i}}^\top)\leq 3\Tr(U_{\tau_i} \widehat{W}^{-1}_{\tau_i} U_{\tau_i}^\top)+3\lambda^2\Tr((A_{\star} \ \ B_{\star})\widehat{W}^{-1}_{\tau_i}(A_{\star} \ \ B_{\star})^\top)+\frac{3}{1-\gamma_i}\Tr(E_i Y_{\tau_i}E_i^\top).
\end{equation}
Note that 
\begin{equation}
\widehat{W}_{\tau_i}-{W}_{\tau_i}=\begin{pmatrix}
0 \,& 0\\[10pt]
0 \,& \frac{\gamma_i}{1-\gamma_i} Y_{\tau_i}
\end{pmatrix}\succeq 0,
\end{equation}
since $Y_{\tau_i}$ is the Schur complement of $W_{\tau_i}\succeq \lambda I$ and is positive definite, and $0<\gamma_i<1$. Hence, we have $\widehat{W}_{\tau_i}\succeq{W}_{\tau_i}$ and therefore $\Tr(U_{\tau_i} \widehat{W}^{-1}_{\tau_i} U_{\tau_i}^\top)\leq \Tr(U_{\tau_i} {W}^{-1}_{\tau_i} U_{\tau_i}^\top)$. Thus
\begin{align}
\Tr(\Delta_{\tau_{i}}\widehat{W}_{\tau_i}\Delta_{\tau_{i}}^\top)\leq\,& 3\Tr(U_{\tau_i} W^{-1}_{\tau_i} U_{\tau_i}^\top)+3\lambda^2\Tr((A_{\star} \ B_{\star})\widehat{W}^{-1}_{\tau_i}(A_{\star} \ B_{\star})^\top)+\frac{3}{1-\gamma_i}\Tr(E_i Y_{\tau_i}E_i^\top)\nonumber\\
\leq\,&3\Tr(U_{\tau_i} W^{-1}_{\tau_i} U_{\tau_i}^\top)+3\lambda\|(A_\star \ B_\star)\|_F^2+\frac{3}{1-\gamma_i}\Tr(E_i Y_{\tau_i}E_i^\top)\label{eq202},
\end{align}
where we have used $\widehat{W}_{\tau_i}\succeq \lambda I$ in the last inequality. In order to bound the first term, we use Theorem~\ref{thm1}. Let $U_t(i)=\sum_{s=1}^{t-1}w_s(i)(x_s^\top \ u_s^\top)$ for all $i=1,\ldots,n$. For each $i$, by Theorem~\ref{thm1}, with probability $1-\delta/n$, we have
\begin{align}\label{eq201}
U_t(i)W^{-1}_t U^\top_t(i)\leq 2\sigma^2 \log\Big(\frac{n}{\delta}\frac{\det(W_t)}{\det(\lambda I_{n+m})}\Big).
\end{align}
From this, by applying the union bound over the events for which~\eqref{eq201} is satisfied, we have, with probability $1-\delta$,
\begin{align*}
\Tr(U_t W^{-1}_t U^\top_t)=\sum_{i=1}^n U_t(i)W^{-1}_t U^\top_t(i)\leq  2n\sigma^2 \log\Big(\frac{n}{\delta}\frac{\det(W_t)}{\det(\lambda I_{n+m})}\Big),
\end{align*}
for all $i=1,\ldots,n$. The result now follows by plugging this into~\eqref{eq202}.
\end{proof}

We recall the following result.

\begin{lemma}\cite[Theorem 20]{AC-AC-TK:2020} \label{thm20} Let $\{z_t\}_{t=1}^\infty$ be a sequence of random vectors adapted to a filtration $\{\mathcal{F}_{t}\}_{t=0}^\infty$. Suppose that the $z_t$ are conditionally Gaussian given $\mathcal{F}_{t-1}$ and that $\mathbb{E}[z_t z_t^\top|\mathcal{F}_{t-1}]\succeq \sigma^2 I$ for some fixed $\sigma^2>0$. Then for $t\geq 200n \log\frac{12}{\delta}$ we have, with probability at least $1-\delta$,
\begin{equation}
\sum_{s=1}^t z_s z_s^\top \succeq \frac{t\sigma^2}{40}I.
\end{equation}
\end{lemma}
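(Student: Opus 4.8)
The plan is to reduce the spectral lower bound to a one-dimensional anti-concentration estimate along each fixed direction and then to make the estimate uniform over the unit sphere by a covering argument. Writing $M_t=\sum_{s=1}^t z_s z_s^\top$, the claim $M_t\succeq \frac{t\sigma^2}{40}I$ is equivalent to $\min_{\|v\|=1}\sum_{s=1}^t (v^\top z_s)^2\ge \frac{t\sigma^2}{40}$, so I would first fix a unit vector $v$ and analyze the scalar adapted process $\{v^\top z_s\}$, and only at the end convert a per-direction bound into a bound on the smallest eigenvalue.

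First, I would establish a per-step small-ball estimate. Conditioned on $\mathcal{F}_{s-1}$, the scalar $v^\top z_s$ is Gaussian with some mean $m_s$ and variance $\tau_s^2$, and the hypothesis $\mathbb{E}[z_s z_s^\top\mid\mathcal{F}_{s-1}]\succeq \sigma^2 I$ forces $m_s^2+\tau_s^2=\mathbb{E}[(v^\top z_s)^2\mid\mathcal{F}_{s-1}]\ge \sigma^2$. The elementary fact I would use is that the square of a Gaussian exceeds a constant fraction of its second moment with at least constant probability, \emph{uniformly in the mean}: there are universal constants $c,q\in(0,1)$ with $\Pr[(v^\top z_s)^2\ge c\sigma^2\mid\mathcal{F}_{s-1}]\ge q$. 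When the variance dominates the second moment this is just Gaussian anti-concentration; when the mean dominates, $v^\top z_s$ concentrates tightly around a point of magnitude at least a constant times $\sigma$, so its square is large with high probability.

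Second, I would count the ``good'' steps by a martingale concentration. Setting $\chi_s=\mathbf{1}\{(v^\top z_s)^2\ge c\sigma^2\}$, we have $\mathbb{E}[\chi_s\mid\mathcal{F}_{s-1}]\ge q$, and a standard multiplicative Chernoff bound for adapted $\{0,1\}$-sequences (via the usual exponential supermartingale) gives $\Pr\big[\sum_{s=1}^t\chi_s\le \tfrac{q}{2}t\big]\le e^{-qt/8}$. On the complementary event, $\sum_{s=1}^t (v^\top z_s)^2\ge c\sigma^2\sum_s\chi_s\ge \tfrac{cq}{2}\sigma^2 t$, so the single-direction lower bound of order $\sigma^2 t$ holds with probability at least $1-e^{-qt/8}$. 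I would then take a $\tfrac14$-net $\mathcal{N}$ of the sphere $S^{n-1}$ with $|\mathcal{N}|\le 9^n$ and union-bound over $\mathcal{N}$; the requirement $t\ge 200n\log\frac{12}{\delta}$ is exactly what forces $|\mathcal{N}|\,e^{-qt/8}\le\delta$ once the universal constants are fixed, which is where the dimension factor $n$, the $\log\frac{12}{\delta}$ threshold, and ultimately the constant $\tfrac{1}{40}$ come from.

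The step I expect to be the main obstacle is the final transfer from the net to the true minimum eigenvalue. Writing $M_t=M_t^{1/2}M_t^{1/2}$, for an arbitrary unit vector $u$ and its nearest net point $v$ one gets $\|M_t^{1/2}u\|\ge\|M_t^{1/2}v\|-\|M_t^{1/2}\|\,\|u-v\|$, and the discretization term carries $\|M_t^{1/2}\|=\sqrt{\lambda_{\max}(M_t)}$, which the hypothesis — a lower bound on the conditional second moment only — does not control. I would handle this either by a one-sided net argument tailored to the smallest eigenvalue, or by first proving an auxiliary high-probability upper bound on $\lambda_{\max}(M_t)$ through concentration of the conditionally Gaussian quadratic forms (for instance via $\mathrm{Tr}(M_t)=\sum_s\|z_s\|^2$), and then absorbing the covering error into a slightly smaller constant. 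Closing this covering step without degrading the $\Theta(\sigma^2 t)$ scaling — as opposed to the small-ball and Chernoff steps, which are routine — is the crux of the argument.
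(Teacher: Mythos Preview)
The paper does not provide its own proof of this lemma; it is quoted verbatim from \cite[Theorem~20]{AC-AC-TK:2020} and used as a black box. So there is no in-paper argument to compare against, and your task is really to reconstruct the proof from the cited source.

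Your proposal is precisely the small-ball method that underlies the result in \cite{AC-AC-TK:2020} (itself adapted from the block-martingale small-ball framework of Simchowitz, Mania, Tu, Jordan, and Recht). The per-direction anti-concentration step and the adapted Chernoff count are correct as stated: for a conditionally Gaussian scalar with second moment at least $\sigma^2$, one obtains $\Pr[(v^\top z_s)^2\ge c\sigma^2\mid\mathcal{F}_{s-1}]\ge q$ for universal $c,q$, and then a supermartingale Chernoff bound gives the claimed exponential tail in $t$ for a single direction.

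You have correctly isolated the only nontrivial issue, and your second proposed fix will \emph{not} work here: the hypothesis gives no upper bound on the conditional covariance, so $\lambda_{\max}(M_t)$ and $\mathrm{Tr}(M_t)$ are genuinely uncontrolled, and any attempt to bound them with high probability from the stated assumptions fails. The resolution used in the cited proof is your first suggestion, but carried out carefully: one does not net the sphere for the quadratic form $v\mapsto v^\top M_t v$ (which would bring in $\lambda_{\max}$), but instead nets at the level of the indicator variables. Concretely, one observes that $v\mapsto \mathbf{1}\{|v^\top z_s|\ge \sqrt{c}\,\sigma\}$ is controlled uniformly over a net because the conditionally Gaussian law of $v^\top z_s$ varies Lipschitzly in $v$ only through $(v^\top\mu_s,\,v^\top\Sigma_s v)$; one then lower-bounds $\lambda_{\min}(M_t)$ directly by $c\sigma^2$ times the minimum over the net of $\sum_s\chi_s(v)$, using that the small-ball events themselves, not the quadratic values, are what transfer across the net. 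This one-sided argument never touches $\lambda_{\max}(M_t)$ and closes the gap without degrading the $\Theta(\sigma^2 t)$ scaling. With that refinement your outline is complete and matches the source.
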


The next result will be used in proving Lemma~\ref{lem1}.
\begin{lemma}\label{lem2}
Let $K\in\real^{m\times n}$ be such that $\|K\|\leq k$ and let $p>0$. Then 
\begin{equation}
\begin{pmatrix}
I_n \,& K^\top\\
K \,& KK^\top + pI_m
\end{pmatrix}\succeq \frac{1}{(1+k^2)/p+1}I_{n+m}.
\end{equation}
\end{lemma}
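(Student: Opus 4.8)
The plan is to reduce the matrix inequality to a scalar quadratic-form estimate and exploit the special block structure, which makes the left-hand side a perfect square plus a diagonal term. Writing $v=(x^\top \ y^\top)^\top$ with $x\in\real^n$ and $y\in\real^m$, the claim $M\succeq cI_{n+m}$, with $M$ the matrix in the statement and $c=\frac{1}{(1+k^2)/p+1}=\frac{p}{1+k^2+p}$, is equivalent to $v^\top M v\ge c(\|x\|^2+\|y\|^2)$ for all such $v$. First I would expand the quadratic form and observe that the cross terms assemble into a square:
\[
v^\top M v=\|x\|^2+2y^\top Kx+\|K^\top y\|^2+p\|y\|^2=\|x+K^\top y\|^2+p\|y\|^2.
\]
Equivalently, this reflects the decomposition $M=\binom{I_n}{K}\begin{pmatrix}I_n & K^\top\end{pmatrix}+p\,\mathrm{diag}(0,I_m)$, which already shows $M\succeq 0$; the content of the lemma is the quantitative lower bound $c>0$.

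The core step is to bound $\|x+K^\top y\|^2+p\|y\|^2$ from below by $c(\|x\|^2+\|y\|^2)$. Since $c<1$, for fixed $y$ the function $x\mapsto \|x+K^\top y\|^2-c\|x\|^2$ is strictly convex, and completing the square gives its minimum value $-\frac{c}{1-c}\|K^\top y\|^2$. Using $\|K^\top y\|\le\|K\|\,\|y\|\le k\|y\|$, I would obtain
\[
v^\top M v-c\|x\|^2\ge\Big(p-\tfrac{ck^2}{1-c}\Big)\|y\|^2,
\]
so it remains to verify $p-\frac{ck^2}{1-c}\ge c$, i.e.\ $(p-c)(1-c)\ge ck^2$. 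Substituting $c=\frac{p}{1+k^2+p}$ reduces this to $(k^2+p)(1+k^2)\ge k^2(1+k^2+p)$, which simplifies to $p\ge 0$ and therefore holds. Alternatively, one may apply the weighted inequality $\|a-b\|^2\le\frac1\theta\|a\|^2+\frac1{1-\theta}\|b\|^2$ with $a=x+K^\top y$, $b=K^\top y$, and $\theta=c$, yielding coefficient $c$ on $\|x\|^2$ and coefficient $\frac{p}{1+k^2}\ge c$ on $\|y\|^2$.

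I do not expect a genuine obstacle here: the statement is a fixed-size linear matrix inequality, and the only nontrivial idea is recognizing the perfect-square structure of the off-diagonal blocks, after which the estimate is an exact one-variable optimization. The main thing to be careful about is that the constant is tight, so the $\|y\|^2$ term must be handled without slack—this is precisely why completing the square (or the weighted Young inequality with the matching exponent $\theta=c$), rather than the cruder $\|a-b\|^2\le 2\|a\|^2+2\|b\|^2$, is needed to land exactly on $c=\frac{p}{1+k^2+p}$.
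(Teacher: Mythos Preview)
Your proof is correct and is essentially the same argument as the paper's, just phrased via quadratic forms rather than matrix algebra: the paper verifies that the Schur complement of $M-cI$ with respect to the $(1-c)I_n$ block, namely $KK^\top+(p-c)I_m-\tfrac{1}{1-c}KK^\top$, is positive semidefinite, which is exactly what your minimization over $x$ computes. The final algebraic check $(p-c)(1-c)\ge ck^2$ is the same in both presentations.
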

\begin{proof}
Since $\|K\|\leq k$, we have $KK^\top\preceq k^2 I_m$, and hence 
\begin{align*}
KK^\top \,&\preceq k^2 I_m +\frac{p}{1+k^2+p}I_m\\
\,&= \frac{(1+k^2)(p+k^2)}{1+k^2+p}I_m.
\end{align*}
Multiplying both sides by $\frac{-p}{1+k^2}$, we obtain
\begin{align}
\frac{-p(p+k^2)}{1+k^2+p}I_m\,&\preceq \frac{-p}{1+k^2}KK^\top\nonumber\\
\,&=KK^\top-KK^\top(1-\frac{p}{p+1+k^2})^{-1}\label{eqkk},
\end{align}
now rearranging~\eqref{eqkk} gives
\begin{align*}
KK^\top+(p-\frac{p}{1+k^2+p})I_m-KK^\top(1-\frac{p}{p+1+k^2})^{-1}\succeq 0.
\end{align*}
Note that the left-hand side is the Schur complement of 
\begin{equation}
\begin{pmatrix}
(1-\frac{p}{p+1+k^2})I_n \,& K^\top\\
K \,& KK^\top + (p-\frac{p}{1+k^2+p})I_m
\end{pmatrix}.
\end{equation}
Given now the fact that $(1-\frac{p}{p+1+k^2})I_n$ is positive-definite, the claim follows.
\end{proof}

We state a useful result next.
\begin{lemma}\label{lem1}
Let $\{x_t,u_t\}_{t=1}^{\tau_1}$ be the sequence of states and actions of the system~\eqref{eq102}, and let $\tau_1\geq 200n \log\frac{12}{\delta}$. Then we have, with probability $1-\delta$,
\begin{equation}
W_{\tau_1}\succeq \frac{\tau_1 \sigma^2}{40(2+k^2)}I_{n+m}.
\end{equation} 
\end{lemma}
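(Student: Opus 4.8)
The plan is to deduce the bound from Lemma~\ref{thm20} after establishing a uniform lower bound on the one-step conditional second moment of the augmented data vectors $z_s=(x_s^\top\ u_s^\top)^\top$. The essential observation is that every index appearing in the sum defining $W_{\tau_1}$, namely $s=1,\dots,\tau_1-1$, falls inside the warm-up phase, where the controller plays $u_s=K_0 x_s+\eta_s$ with $\eta_s\sim\mathcal{N}(0,\sigma^2 I_m)$ and $\|K_0\|\le k$. Taking $\mathcal{F}_{s-1}$ to be the $\sigma$-algebra generated by $w_1,\dots,w_{s-2}$ and $\eta_1,\dots,\eta_{s-1}$ (so that $x_{s-1},u_{s-1}$ are measurable but $w_{s-1}$ and $\eta_s$ are not), the state $x_s=A_*x_{s-1}+B_*u_{s-1}+w_{s-1}$ and the input $u_s=K_0x_s+\eta_s$ are both affine images of the independent Gaussian pair $(w_{s-1},\eta_s)$; hence $z_s$ is conditionally Gaussian given $\mathcal{F}_{s-1}$, which is exactly the hypothesis needed to invoke Lemma~\ref{thm20}.

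First I would compute the conditional covariance. Writing $z_s$ as its $\mathcal{F}_{s-1}$-measurable mean plus the fluctuation $(w_{s-1}^\top\ (K_0 w_{s-1}+\eta_s)^\top)^\top$, the cross terms between the independent $w_{s-1}$ and $\eta_s$ vanish and one obtains
\begin{equation*}
\mathbb{E}[z_s z_s^\top\mid\mathcal{F}_{s-1}]\succeq \mathrm{Cov}(z_s\mid\mathcal{F}_{s-1})=\sigma^2\begin{pmatrix} I_n & K_0^\top\\ K_0 & K_0K_0^\top+I_m\end{pmatrix},
\end{equation*}
where the inequality simply drops the positive-semidefinite outer product of the conditional mean. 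The matrix on the right is precisely the object controlled by Lemma~\ref{lem2}: applying that lemma with $K=K_0$ and $p=1$ yields $\mathbb{E}[z_s z_s^\top\mid\mathcal{F}_{s-1}]\succeq \frac{\sigma^2}{2+k^2}I_{n+m}$ for every warm-up index $s$.

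With the effective conditional variance identified as $\sigma^2/(2+k^2)$, I would apply Lemma~\ref{thm20} to the $(n+m)$-dimensional sequence $\{z_s\}$, which gives, with probability at least $1-\delta$,
\begin{equation*}
\sum_{s=1}^{\tau_1-1} z_s z_s^\top\succeq \frac{(\tau_1-1)\sigma^2}{40(2+k^2)}I_{n+m}
\end{equation*}
once $\tau_1-1$ exceeds the stated threshold. Finally, since $W_{\tau_1}=\sum_{s=1}^{\tau_1-1}z_s z_s^\top+\lambda I_{n+m}$ and $\lambda$ is far larger than $\sigma^2/(40(2+k^2))$, the $\lambda I_{n+m}$ term absorbs the off-by-one gap between $\tau_1-1$ and $\tau_1$, producing the claimed $\frac{\tau_1\sigma^2}{40(2+k^2)}I_{n+m}$.

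The only genuinely delicate point is the bookkeeping of the filtration: one must condition at the step where $x_s$ still carries the fresh innovation $w_{s-1}$, for otherwise (conditioning on $x_s$ itself) the top-left block degenerates to the rank-one $x_sx_s^\top$ and no positive-definite floor survives. Everything else is mechanical, modulo a minor dimensional caveat: Lemma~\ref{thm20} is here applied to vectors of dimension $n+m$, so the threshold is most honestly $200(n+m)\log(12/\delta)$, and the stated $200n\log(12/\delta)$ should be read with this understanding.
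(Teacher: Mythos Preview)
Your proposal is correct and follows essentially the same route as the paper: verify conditional Gaussianity of $z_s$ during warm-up, lower bound $\mathbb{E}[z_sz_s^\top\mid\mathcal{F}_{s-1}]$ by $\sigma^2\begin{pmatrix}I_n & K_0^\top\\ K_0 & K_0K_0^\top+I_m\end{pmatrix}$, invoke Lemma~\ref{lem2} with $p=1$, and then Lemma~\ref{thm20}. Your explicit discussion of the filtration and the $n$ versus $n+m$ threshold is in fact more careful than the paper's own write-up, which silently uses the same steps.
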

\begin{proof}
Let $z_t=(x_t \ u_t)$, for $t=1,\ldots,\tau_1-1$, and consider $u_t=K_0 x_t+\eta_t$, where $K_0$ is a stabilizing feedback gain, and $\eta_t\sim \mathcal{N}(0,\sigma^2I_m)$. Given that $J(K_0)\leq \nu$, by Lemma~\ref{Lem41} in Appendix A., $K_0$ is $(k,\ell)$-strongly stable, with $k=\frac{\nu}{\alpha_0\sigma^2}$ and $\ell=\frac{1}{2k^2}$. Now we can write
\begin{align*}
\mathbb{E}[z_t z_t^\top| \mathcal{F}_{t-1}]=\,&\begin{pmatrix}
\mathbb{E}[x_s x_s^\top| \mathcal{F}_{t-1}] \,& \mathbb{E}[x_s u_s^\top| \mathcal{F}_{t-1}]\\[6pt]
\mathbb{E}[u_s x_s^\top| \mathcal{F}_{t-1}] \,& \mathbb{E}[u_s u_s^\top| \mathcal{F}_{t-1}]
\end{pmatrix}\\[6pt]
=\,&\begin{pmatrix}
\mathbb{E}[x_s x_s^\top| \mathcal{F}_{t-1}] \,& \mathbb{E}[x_s x_s^\top| \mathcal{F}_{t-1}]K_0^\top\\[6pt]
K_0\mathbb{E}[x_s x_s^\top| \mathcal{F}_{t-1}] \,& K_0\mathbb{E}[x_s x_s^\top| \mathcal{F}_{t-1}]K_0^\top+\mathbb{E}[\eta_t\eta_t^\top| \mathcal{F}_{t-1}]
\end{pmatrix}\\[6pt]
=\,&\begin{pmatrix}
I_n \\
K_0 
\end{pmatrix}\mathbb{E}[x_s x_s^\top| \mathcal{F}_{t-1}] \begin{pmatrix}
I_n \,& K_0 
\end{pmatrix}+\begin{pmatrix}
0 \,& 0\\
0 \,& \mathbb{E}[\eta_t\eta_t^\top| \mathcal{F}_{t-1}]
\end{pmatrix}\\
\succeq\,& \sigma^2\begin{pmatrix}
I_n \,& K_0^\top\\
K_0 \,& K_0 K_0^\top+I_m
\end{pmatrix},
\end{align*}
where we have used the facts that $\mathbb{E}[x_s x_s^\top| \mathcal{F}_{t-1}]\succeq \mathbb{E}[w_s w_s^\top| \mathcal{F}_{t-1}]=\sigma^2 I_n$ and $\mathbb{E}[\eta_t\eta_t^\top| \mathcal{F}_{t-1}]=\sigma^2 I_m$. Now using $\|K_0\|\leq k$ and applying Lemma~\ref{lem2}, we have that
\begin{align*}
\mathbb{E}[z_t z_t^\top| \mathcal{F}_{t-1}]\succeq\,&\frac{\sigma^2}{2+k^2} I_{n+m}.
\end{align*}
Given that $W_{\tau_1}=\lambda I_{n+m} +\sum_{s=1}^{\tau_1 -1}z_s z_s^\top$, and using Lemma~\ref{thm20}, with probability $1-\delta$, we have
\begin{equation}
W_{\tau_1}\succeq \lambda I_{n+m} + \frac{(\tau_1 -1 )\sigma^2}{40(2+k^2)}I_{n+m}\succeq \frac{\tau_1 \sigma^2}{40(2+k^2)}I_{n+m},
\end{equation}
as claimed.
\end{proof}

In order to find the bound on the expected value of the regret, we consider an event with high probability on which the growth of the regret is small. For this event, the estimated parameters of the system are close to the true system parameters, and the feedback gains generated by the algorithm are strongly stable. We give the specifics of this next.

We define the following events:
\begin{subequations}
\begin{align}
\mathcal{E}_x=\,&\Big\{\sum_{t=\tau_i}^{\tau_{i+1}-1}x_s x_s^\top\succeq \frac{(\tau_{i+1}-\tau_i)\sigma^2}{40}I_n, \ \mathrm{for} \ i=1, \ldots, n_T\Big\},\label{eve:test1}\\
\mathcal{E}_W=\,&\Big\{ W_{\tau_1}\succeq \frac{\tau_1 \sigma^2}{40(2+k^2)}I_{n+m} \Big\},\label{eve:test2}\\
\mathcal{E}_w =\,& \Big\{\max_{1\leq t \leq T}\|w_t\|\leq \sigma \sqrt{15n \log 4T}\Big\},\label{eve:test3}\\
\mathcal{E}_\eta =\,& \Big\{\max_{1\leq t \leq T}\|\eta_t\|\leq \sigma \sqrt{15n \log 4T}\Big\},\label{eve:test4}\\
\mathcal{E}_\Delta=\,&\Big\{\Tr(\Delta_{\tau_i} \widehat{W}_{\tau_i} \Delta_{\tau_i}^\top)\leq 6 n \sigma^2 \log\Big(4T^3\frac{\det(W_{\tau_i})}{\det(\lambda I)}\Big)+3\lambda\|(A_\star,B_\star)\|_F^2\nonumber\\ &\qquad \qquad \qquad \qquad +\frac{3}{1-\gamma_i}\Tr(E_i Y_{\tau_i}E_i^\top), \quad \ \mathrm{for} \ i=1,\ldots, n_T\Big\}\label{eve:test5}.
\end{align}
\end{subequations}

\begin{lemma}\label{lem:245}
Let $\mathcal{E}=\mathcal{E}_x\cap \mathcal{E}_W \cap \mathcal{E}_w \cap \mathcal{E}_\eta \cap \mathcal{E}_\Delta$, and $\tau_1 \geq 600 n \log 48 T$. Then $\mathbb{P}(\mathcal{E})\geq 1-T^{-2}$.
\end{lemma}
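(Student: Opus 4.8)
The plan is to bound the probability of the complement $\mathcal{E}^c=\mathcal{E}_x^c\cup\mathcal{E}_W^c\cup\mathcal{E}_w^c\cup\mathcal{E}_\eta^c\cup\mathcal{E}_\Delta^c$ by a union bound, establishing $\mathbb{P}(\mathcal{E}^c)\leq\sum_j\mathbb{P}(\mathcal{E}_j^c)\leq T^{-2}$. Each of the five failure probabilities is controlled by one of the concentration results already in hand, with the confidence parameter $\delta$ chosen so that the powers of $T$ baked into the event definitions in \eqref{eve:test3}--\eqref{eve:test5} reproduce exactly the desired $T^{-2}$ budget once the union bound is applied.

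For $\mathcal{E}_W$ I would invoke Lemma~\ref{lem1} with $\delta=\tfrac{1}{4T^2}$: its hypothesis $\tau_1\geq 200n\log\tfrac{12}{\delta}=200n\log(48T^2)$ follows from the standing assumption $\tau_1\geq 600n\log 48T$, since $600\log 48T=200\log(48T)^3=200\log(48^3T^3)\geq 200\log(48T^2)$, giving $\mathbb{P}(\mathcal{E}_W^c)\leq\tfrac{1}{4T^2}$, the dominant contribution. For $\mathcal{E}_x$ I would apply Lemma~\ref{thm20} separately on each window $\{\tau_i,\dots,\tau_{i+1}-1\}$, using that $x_{t+1}\mid\mathcal{F}_t$ is Gaussian with conditional covariance at least $\sigma^2 I_n$ (the noise $w_t$ alone contributes $\sigma^2I_n$, regardless of which branch of the algorithm is active), so $\mathbb{E}[x_t x_t^\top\mid\mathcal{F}_{t-1}]\succeq\sigma^2 I_n$ holds throughout. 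Taking $\delta=\tfrac{1}{4T^3}$ per window, verifying $\tau_{i+1}-\tau_i\geq 200n\log\tfrac{12}{\delta}$ from the schedule $\tau_i=\tau_1 r^{2(i-1)}$, and applying a union bound over the $n_T$ windows yields $\mathbb{P}(\mathcal{E}_x^c)\leq\tfrac{n_T}{4T^3}$. The event $\mathcal{E}_\Delta$ is handled identically from Lemma~\ref{lem107}: matching its bound to \eqref{eve:test5} forces $\tfrac{n}{\delta}=4T^3$, i.e. $\delta=\tfrac{n}{4T^3}$, and a union bound over the $n_T$ estimation rounds gives $\mathbb{P}(\mathcal{E}_\Delta^c)\leq\tfrac{n\,n_T}{4T^3}$.

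For the norm-bound events $\mathcal{E}_w,\mathcal{E}_\eta$ I would use a Laurent--Massart type chi-squared tail for $\|w_t\|^2/\sigma^2\sim\chi^2_n$: setting the deviation parameter to $3\log 4T$, one checks $n+2\sqrt{3n\log 4T}+6\log 4T\leq 15n\log 4T$ whenever $\log 4T\geq 1$, so each step fails with probability at most $(4T)^{-3}$, and a union bound over $t=1,\dots,T$ gives $\mathbb{P}(\mathcal{E}_w^c),\mathbb{P}(\mathcal{E}_\eta^c)\leq\tfrac{1}{64T^2}$. Summing the five pieces and using $n_T\leq\log_r(T/\tau_1)$ so that the $T^{-3}$ terms are $o(T^{-2})$, the total is at most $\big(\tfrac14+\tfrac{2}{64}+o(1)\big)T^{-2}\leq T^{-2}$ for $T$ beyond the stated polynomial threshold, which is the claim.

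The main obstacle I anticipate is bookkeeping rather than any single hard estimate: one must pick the per-event $\delta$'s so that the $\log$-arguments in \eqref{eve:test3}--\eqref{eve:test5} reproduce precisely the constants ($4T$, $4T^3$, $15$, $600$, $48$) while keeping the five pieces summing below $T^{-2}$, and simultaneously confirm that \emph{every} window is long enough for Lemma~\ref{thm20}. The most delicate sub-point is the uniform window-length check for $\mathcal{E}_x$ at the first (shortest) window $i=1$, where $\tau_2-\tau_1=\tau_1(r^2-1)$; here the factor $r^2-1$ and the precise constant in the lower bound on $\tau_1$ must be reconciled, and it is also where one confirms that the conditional covariance lower bound $\mathbb{E}[x_t x_t^\top\mid\mathcal{F}_{t-1}]\succeq\sigma^2 I_n$ survives the abort branch of the algorithm.
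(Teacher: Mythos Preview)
Your proposal is correct and follows essentially the same union-bound strategy as the paper, with appropriate per-event choices of $\delta$ matched to the constants in \eqref{eve:test1}--\eqref{eve:test5}. The only minor deviations are that the paper invokes the ready-made Gaussian tail bound of Lemma~\ref{lem13} for $\mathcal{E}_w,\mathcal{E}_\eta$ rather than redoing a Laurent--Massart computation, and it exploits the fact that the self-normalized bound underlying Lemma~\ref{lem107} (via Lemma~\ref{thm1}) already holds uniformly in $t$, so no additional union bound over the $n_T$ rounds is needed for $\mathcal{E}_\Delta$; your extra $n_T$ factor is harmless but unnecessary.
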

\begin{proof}
Let $\delta=\frac{1}{4}T^{-3}$. Using Lemma~\ref{lem1}, we have that for $\tau_1 \geq 600 n \log 48 T$, $\mathbb{P}(\mathcal{E}_W)\geq 1-\frac{1}{4}T^{-3}$. Note that $\tau_{i+1}-\tau_i\geq \tau_1$, and by using Lemma~\ref{thm20} $n_T$ times and taking a union bound, since $n_{T+1}\leq T$, we obtain $\mathbb{P}(\mathcal{E}_x \cap \mathcal{E}_W)\geq 1-\frac{1}{4}T^{-2}$.

Next, by Lemma~\ref{lem13} in Appendix A.~and letting $\delta = \frac{1}{4}T^{-2}$ with probability $1-\frac{1}{4}T^{-2}$, we have that $\max_{1\leq t \leq T}\|w_t\|\leq \sigma \sqrt{15n \log 4T}$. Similarly, $\max_{1\leq t \leq T}\|\eta_t\|\leq \sigma \sqrt{15n \log 4T}$.

For the last part, we use Lemma~\ref{lem107} by selecting $\delta=\frac{1}{4}T^{-2}$. By taking a union bound, we obtain the result.

\end{proof}

\begin{lemma}\label{lem201}
Let $\{x_t, u_t\}_{t=1}^T$ be the sequence of states and actions of the system~\eqref{eq102} generated by Algorithm~\ref{alg}. Then on the event $\mathcal{E}_w\cap\mathcal{E}_\eta$, we have $\|x_t\| \leq \sigma k^3(1+\phi)\sqrt{60n \log 4T}$ and $\|u_t\|\leq \sigma k^4(2+\phi)\sqrt{60n \log 4T}$, for all $1\leq t \leq \tau_1-1$.
\end{lemma}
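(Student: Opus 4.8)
The plan is to unroll the closed-loop dynamics during the warm-up stage and exploit the strong stability of $K_0$ to sum a geometric series. First I would substitute the warm-up controller $u_t = K_0 x_t + \eta_t$ into \eqref{eq102}, obtaining the closed-loop recursion
\[
x_{t+1} = (A_* + B_* K_0)\,x_t + B_* \eta_t + w_t.
\]
Writing $\tilde{A} = A_* + B_* K_0$ and recalling that $K_0$ is $(k,\ell)$-strongly stabilizing with $\ell = \frac{1}{2k^2}$ (Lemma~\ref{Lem41}), there exist $H \succ 0$ and $L$ with $\tilde{A} = H L H^{-1}$, $\|L\| \le 1-\ell$, and $\|H\|\|H^{-1}\| \le k$. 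Hence $\|\tilde{A}^j\| = \|H L^j H^{-1}\| \le k(1-\ell)^j$, and summing the geometric series gives $\sum_{j=0}^{\infty}\|\tilde{A}^j\| \le k/\ell = 2k^3$.

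Next, unrolling the recursion from the initial state (taking $x_1 = 0$, as is standard in this setting) yields
\[
x_t = \sum_{s=1}^{t-1} \tilde{A}^{\,t-1-s}\bigl(B_* \eta_s + w_s\bigr).
\]
On the event $\mathcal{E}_w \cap \mathcal{E}_\eta$ we have $\|w_s\|, \|\eta_s\| \le \sigma\sqrt{15 n \log 4T}$, and $\|B_*\| \le \|B_*\|_F \le \phi$, so that $\|B_* \eta_s + w_s\| \le (1+\phi)\,\sigma\sqrt{15 n \log 4T}$. Combining this with the geometric-sum bound gives
\[
\|x_t\| \le 2k^3 (1+\phi)\,\sigma\sqrt{15 n \log 4T} = \sigma k^3 (1+\phi)\sqrt{60 n \log 4T},
\]
where I use $2\sqrt{15} = \sqrt{60}$; this is the first claimed bound. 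For the control input I would then use $u_t = K_0 x_t + \eta_t$ together with $\|K_0\| \le k$ to get $\|u_t\| \le k\|x_t\| + \|\eta_t\|$. Substituting the bound on $\|x_t\|$ and rewriting $\sigma\sqrt{15 n \log 4T} = \tfrac12 \sigma\sqrt{60 n \log 4T}$ yields
\[
\|u_t\| \le \sigma\sqrt{60 n \log 4T}\,\bigl(k^4(1+\phi) + \tfrac12\bigr),
\]
and since $k \ge 1$ (so $k^4 \ge \tfrac12$) one has $k^4(1+\phi)+\tfrac12 \le k^4(2+\phi)$, giving the second claimed bound.

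The hard part will be the strong-stability decomposition: the essential step is recognizing $\tilde{A} = HLH^{-1}$ as a contraction in the $H$-weighted norm, which is exactly what makes $\sum_j \|\tilde{A}^j\|$ summable with total $2k^3$; once this is in hand the remainder is bookkeeping. The two points requiring care are the treatment of the initial condition $x_1$ (here taken to be zero, and otherwise contributing a geometrically vanishing transient that is dominated by the noise terms) and the verification that $k \ge 1$ so that the additive perturbation $\eta_t$ can be absorbed cleanly into the bound on $\|u_t\|$.
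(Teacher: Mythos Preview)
Your proposal is correct and follows essentially the same route as the paper. The only difference is packaging: the paper invokes Lemma~\ref{lem104} (which encapsulates the unrolling and geometric-series bound $\|x_t\|\le k(1-\ell)^{t-s_0}\|x_{s_0}\|+\frac{k}{\ell}\max_s\|\tilde w_s\|$) and then applies it with $x_1=0$, whereas you inline that argument directly via the $HLH^{-1}$ decomposition; the remaining steps for $\|u_t\|$ are identical.
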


\begin{proof}
For $t=1,\ldots,\tau_1-1$, the controller uses $u_t=K_0x_t+\eta_t$. By plugging this into~\eqref{eq102}, and defining $\tilde{w}_t=w_t+B_*\eta_t$, we have
\begin{equation}
x_{t+1}=A_*x_t+B_*K_0x_t+B_*\eta_t+w_t=(A_*+B_*K_0)x_t+\tilde{w}_t.
\end{equation} 
By the assumption $J(K_0)\leq \nu$, and applying Lemma~\ref{Lem41} in Appendix A., we have that $K_0$ is $(k,\ell)$-strongly stable with $k=\frac{\nu}{\alpha_0\sigma^2}$ and $\ell=1/(2k^2)$. Applying now Lemma~\ref{lem104} in Appendix A.~with $x_1=0$, we conclude that 
\begin{align}\label{eq204}
	\|x_t\| \leq  2 k^3 \max_{1\leq s \leq T}\|\tilde{w}_s\|. \qquad 1\leq t \leq \tau_1-1
\end{align}
Now, we can bound $\max_{1\leq s \leq T}\|\tilde{w}_s\|$ on the event $\mathcal{E}_w\cap\mathcal{E}_\eta$:
\begin{align*}
\max_{1\leq s \leq T}\|\tilde{w}_s\|\leq \max_{1\leq s \leq T}\|w_s\|+\|B_*\|\max_{1\leq s \leq T}\|\eta_s\|\leq \sigma(1+\phi)\sqrt{15n \log 4T}.
\end{align*}
The bound for $\|x_t\|$ follows by plugging this into~\eqref{eq204}.

Using $u_t=K_0 x_t+\eta_t$, we have
\begin{align*}
\|u_t\|\leq \,&\|K_0\|\|x_t\|+\|\eta_t\|\\
\leq \,& \sigma k^4(1+\phi)\sqrt{60n \log 4T}+\max_{1\leq s \leq T}\|\eta_s\|\\
\leq \,&\sigma k^4(1+\phi)\sqrt{60n \log 4T}+\sigma \sqrt{15n \log 4T}\\
\leq \,&\sigma k^4(2+\phi)\sqrt{60n \log 4T},
\end{align*}
as claimed.
\end{proof}

\begin{lemma}\label{lem210}
Let $0\leq 1-\gamma_1\leq \frac{1}{r}$ and $(1-\gamma_{i+1})\leq \frac{1}{r}(1-\gamma_i)$ for all $i\geq 1$, and let $p=\frac{r}{2+k^2}$. Further assume $\|E_i\|^2_F\leq \frac{1-\gamma_i}{\tau_i}$ for all $i\geq 1$. On the event $\mathcal{E}=\mathcal{E}_x\cap \mathcal{E}_W \cap \mathcal{E}_w \cap\mathcal{E}_\eta\cap \mathcal{E}_\Delta$ we have that,
\begin{enumerate}
\item $K_{\tau_i}$ is $(k,\ell)$-strongly stable, for all $1\leq i \leq n_T$;
\item $\|x_t\|^2\leq x_b$, for all $1\leq t \leq T$;
\item $\|\Delta_{\tau_i}\| \leq \epsilon_0 r^{-i+1}$, for all $1\leq i \leq n_T$;
\item $\widehat{W}_{\tau_i}\succeq \frac{\sigma^2\tau_{i}}{40((1+k^2)/\min\{p,1\}+1)}I_{n+m}$ for all $1\leq i \leq n_T$,

\end{enumerate}
where 
\begin{equation}\label{x_b}
x_b=135 n k^2 \sigma^2 \max\Big\{(1+\phi)^2k^6,4k^6\Big\}\log(4T).
\end{equation}
%$K_{\tau_{i-1}}$ generated from the algorithm be $(k_0,l_0)$-strongly stable, 
%and let $\widehat{W}_{\tau_i}$ is defined as
%\begin{equation}
%\widehat{W}_{\tau_i}=\begin{pmatrix}
%\sum_{s=1}^{\tau_i-1}x_s x_s^\top \,& \sum_{s=1}^{\tau_i-1} x_s u_s^\top\\[10pt]
%\sum_{s=1}^{\tau_i-1}u_s x_s^\top \,& \sum_{s=1}^{\tau_i-1} u_s u_s^\top + \frac{1-(1-\kappa)^i}{(1-\kappa)^i} Y_{\tau_i}
%\end{pmatrix}+ \lambda I, 
%\end{equation}
%where $Y_{\tau_i}=\sum_{s=1}^{\tau_i-1}u_s u_s^\top + \lambda I - (\sum_{s=1}^{\tau_i-1}u_s x_s^\top) V_{\tau_i}^{-1}(\sum_{s=1}^{\tau_i-1}x_s u_s^\top)$, 
%then on the event $\mathcal{E}_x \cap \mathcal{E}_W$,

\end{lemma}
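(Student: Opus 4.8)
The plan is to establish all four claims simultaneously by induction on $i$, proving them at level $i$ in the order (4), (3), (1), (2). This ordering is forced by the dependency chain: the data-matrix lower bound (4) feeds the estimation-error bound (3), which yields strong stability of $K_{\tau_i}$ via the perturbation Lemma~\ref{lemm}, which in turn yields the state bound (2). The key structural observation is that claim (4) at level $i$ uses only data collected up to time $\tau_i-1$, i.e. the warm-up together with periods $1,\dots,i-1$, so it may freely invoke claims (1) and (2) at the strictly lower levels $j<i$ (these guarantee $\|K_{\tau_j}\|\le k$ and $\|x_t\|^2\le x_b$, so the algorithm never aborts and $u_s=K_{\tau_j}x_s$ holds on each such period). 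The base level $i=1$ has no prior period: there claim (4) reduces to $\widehat W_{\tau_1}\succeq W_{\tau_1}\succeq\frac{\tau_1\sigma^2}{40(2+k^2)}I$, which holds on $\mathcal{E}_W$ and already gives the asserted bound since $(1+k^2)/\min\{p,1\}+1\ge 2+k^2$.

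Claim (4), the inductive step, is the heart of the argument. Since $\widehat W_{\tau_i}=\lambda I+\sum_{s=1}^{\tau_i-1}z_sz_s^\top+\left(\begin{smallmatrix}0&0\\0&\frac{\gamma_i}{1-\gamma_i}Y_{\tau_i}\end{smallmatrix}\right)$, I would drop the PSD warm-up block and $\lambda I$ and retain only the contributions of periods $1,\dots,i-1$ together with the hint block. On period $j$ the (non-aborted) control is $u_s=K_{\tau_j}x_s$, so $\sum_{s\in P_j}z_sz_s^\top=\left(\begin{smallmatrix}I\\ K_{\tau_j}\end{smallmatrix}\right)S_j\left(\begin{smallmatrix}I & K_{\tau_j}^\top\end{smallmatrix}\right)$ with $S_j=\sum_{s\in P_j}x_sx_s^\top\succeq\frac{|P_j|\sigma^2}{40}I=:s_jI$ by $\mathcal{E}_x$. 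The hint block is lower-bounded using $\mathcal{E}_W$: because $W_{\tau_i}\succeq W_{\tau_1}\succeq\frac{\tau_1\sigma^2}{40(2+k^2)}I$ and the Schur complement (in the sense of~\eqref{eq:yt}) of a matrix that is $\succeq cI$ is itself $\succeq cI$, we get $Y_{\tau_i}\succeq\frac{\tau_1\sigma^2}{40(2+k^2)}I$. I would then distribute this $u$-block bound as $\sum_j\alpha_jI$ with $\alpha_j=p\,s_j$, which is feasible provided the hint decays fast enough that $\frac{\gamma_i}{1-\gamma_i}\,\frac{\tau_1}{2+k^2}\ge p\,(\tau_i-\tau_1)$; this is exactly where the standing assumption on $1-\gamma_i$ enters. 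Adding $\alpha_jI$ to the $j$-th period block produces $s_j\left(\begin{smallmatrix}I & K_{\tau_j}^\top\\ K_{\tau_j} & K_{\tau_j}K_{\tau_j}^\top+pI\end{smallmatrix}\right)$, and Lemma~\ref{lem2} (with $\|K_{\tau_j}\|\le k$) bounds each below by $\frac{s_j}{(1+k^2)/p+1}I$. Summing over $j$ and using $\sum_j|P_j|=\tau_i-\tau_1$ gives $\widehat W_{\tau_i}\succeq\frac{\sigma^2(\tau_i-\tau_1)}{40((1+k^2)/p+1)}I$, which is the claim (the $\min\{p,1\}$ merely reconciles this with the $i=1$ bound, and $\tau_i-\tau_1\ge(1-r^{-2})\tau_i$ absorbs the constant). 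The essential device is that Lemma~\ref{lem2} absorbs the off-diagonal cross terms precisely because $u_s=K_{\tau_j}x_s$ aligns them with the blocks; a generic block-matrix bound would lose polylogarithmic factors through $\|\sum x_su_s^\top\|$.

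For claim (3) I would use $\|\Delta_{\tau_i}\|^2\le\Tr(\Delta_{\tau_i}\Delta_{\tau_i}^\top)\le\Tr(\Delta_{\tau_i}\widehat W_{\tau_i}\Delta_{\tau_i}^\top)/\lambda_{\min}(\widehat W_{\tau_i})$, bounding the numerator by the definition of $\mathcal{E}_\Delta$ and the denominator by claim (4). Each of the three terms in $\mathcal{E}_\Delta$ is $O(\log T)$: the log-determinant term because $\|z_s\|^2\le(1+k^2)x_b$ on the bounded-state event forces $\det(W_{\tau_i})/\det(\lambda I)\le(\mathrm{poly}\cdot\tau_i)^{n+m}$; the regularization term because $\lambda\sim x_b\sim\log T$ and $\|(A_*\ B_*)\|_F^2\le 2\phi^2$; and the hint term because $\|Y_{\tau_i}\|\lesssim\tau_ik^2x_b$ while $\|E_i\|_F^2\le(1-\gamma_i)/\tau_i$, so the prefactor $\frac{1}{1-\gamma_i}$ cancels and only an $O(\log T)$ quantity survives. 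Dividing by $\lambda_{\min}(\widehat W_{\tau_i})\gtrsim\sigma^2\tau_i$ gives $\|\Delta_{\tau_i}\|^2\lesssim(\log T)/\tau_i=(\log T)r^{-2(i-1)}/\tau_1$, and the stated choice of $\tau_1$ makes this $\le\epsilon_0^2r^{-2(i-1)}$, which is claim (3).

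Finally, claim (1) follows from (3): $\|\Delta_{\tau_i}\|\le\epsilon_0$ gives $\|A_{\tau_i}-A_*\|,\|B_{\tau_i}-B_*\|\le\epsilon_0$, so Lemma~\ref{lemm} yields $J(K_{\tau_i})\le J_*+C_0\epsilon_0^2\le\nu+C_0\epsilon_0^2$, and Lemma~\ref{Lem41} then certifies that $K_{\tau_i}$ is $(k,\ell)$-strongly stable with $k=\sqrt{(\nu+\epsilon_0^2C_0)/(\alpha_0\sigma^2)}$, matching the definition of $k$; in particular $\|K_{\tau_i}\|\le k$, so no abort is triggered. Claim (2) then follows period-by-period: on the warm-up it is Lemma~\ref{lem201}, and on period $i$ the closed loop $x_{t+1}=(A_*+B_*K_{\tau_i})x_t+w_t$ is $(k,\ell)$-contracting by claim (1), so Lemma~\ref{lem104} together with $\mathcal{E}_w$ bounds $\|x_t\|$ by the steady-state size $\lesssim k^4\sigma\sqrt{n\log T}$, the extra factor $k$ over the warm-up arising from the transient after switching from the bounded terminal state $x_{\tau_i}$ of the previous period; squaring and comparing with the two terms of $x_b$ in~\eqref{x_b} closes the bound. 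The main obstacle is claim (4): reconciling the fast hint decay with the geometrically growing period lengths so that the hint block supplies exactly the missing $pI$ excitation in the control directions, and invoking Lemma~\ref{lem2} to keep the off-diagonal terms from degrading the lower bound.
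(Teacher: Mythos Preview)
Your proposal is correct in outline and uses the same ingredients, but your treatment of claim (4) follows a genuinely different decomposition from the paper's, and this is worth noting.

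The paper proves (4) \emph{recursively}: it shows
\[
\widehat W_{\tau_{i+1}}\;\succeq\;\widehat W_{\tau_i}
+\begin{pmatrix}I\\K_{\tau_i}\end{pmatrix}\!\!\sum_{s=\tau_i}^{\tau_{i+1}-1}x_sx_s^\top\!\!\begin{pmatrix}I&K_{\tau_i}^\top\end{pmatrix}
+\begin{pmatrix}0&0\\0&\dfrac{\gamma_{i+1}-\gamma_i}{(1-\gamma_{i+1})(1-\gamma_i)}Y_{\tau_1}\end{pmatrix},
\]
then lower-bounds the \emph{increment} of the hint term (not the full hint) and applies Lemma~\ref{lem2} to the single period $i$. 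This telescopes exactly to $\frac{\sigma^2\tau_{i+1}}{40C}I$ with $C=(1+k^2)/\min\{p,1\}+1$, matching the statement with no constant loss. To justify replacing $Y_{\tau_{i+1}}$ by $Y_{\tau_1}$ the paper invokes Lemma~\ref{lem43} (monotonicity of the Schur complement under $W_{\tau_{i+1}}\succeq W_{\tau_i}\succeq\cdots$), which your approach never needs. Conversely, your direct decomposition distributes the \emph{entire} hint block $\frac{\gamma_i}{1-\gamma_i}Y_{\tau_i}$ across all prior periods and bounds each resulting block by Lemma~\ref{lem2}; this avoids the recursion and Lemma~\ref{lem43}, at the price of producing $\frac{\sigma^2(\tau_i-\tau_1)}{40C}I$ rather than $\frac{\sigma^2\tau_i}{40C}I$. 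That gap is real: ``$\tau_i-\tau_1\ge(1-r^{-2})\tau_i$ absorbs the constant'' does not give the constant in the statement. The fix is simply not to drop the warm-up block: keep $W_{\tau_1}\succeq\frac{\tau_1\sigma^2}{40(2+k^2)}I\succeq\frac{\tau_1\sigma^2}{40C}I$ on $\mathcal E_W$ and add it back, recovering the full $\tau_i$.

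Two smaller points. First, for claim (2) you invoke Lemma~\ref{lem104} period-by-period; to prevent the factor $k$ from compounding across switches you need each period length to satisfy $\tau_{i+1}-\tau_i\ge \ell^{-1}\log(2k)$ so that the transient $k(1-\ell)^{\tau_{i+1}-\tau_i}$ contracts below $1/2$. The paper packages exactly this into Lemma~\ref{lem39} and applies it once to the whole sequence of controllers; your route works but should state that period-length condition (it holds for the given $\tau_1$). Second, for claim (1) the paper cites Lemma~\ref{lem42} (which combines Lemmas~\ref{lemm} and~\ref{Lem41}); your two-step argument via $J(K_{\tau_i})\le\nu+C_0\epsilon_0^2$ is the same content.
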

\begin{proof}
We use an induction on $i$. For $i=1$, using $\eqref{eq102}$, for all $s\geq 1$, we have that
\begin{equation}
\mathbb{E}[x_{s}x_{s}^\top]\succeq \mathbb{E}[w_s w_s^\top] \succeq \sigma^2 I_n.
\end{equation}
For the event $\mathcal{E}_W$, we have $W_{\tau_1}\succeq \frac{\tau_1 \sigma^2}{40(2+k^2)}I_{n+m}$. Using this, we have
\begin{align}\label{eq701}
\widehat{W}_{\tau_1}=W_{\tau_1}+\begin{pmatrix}
0 \,& 0 \\
0 \,& \frac{\gamma_1}{1-\gamma_1} Y_{\tau_1}
\end{pmatrix}\succeq \frac{\tau_1 \sigma^2}{40(2+k^2)}I_{n+m},
\end{align}
which proves the statement 4 of Lemma~\ref{lem210} for $i=1$. Next we show that 
\begin{align*}
\|\Delta_{\tau_1}\| \leq \epsilon_0,
\end{align*}
proving statement 3. First note that $\|x_t\|^2\leq x_b$, for all $1\leq t < \tau_1$ by Lemma~\ref{lem201}. By defining $z_s=(x_s^\top \ u_s^\top)^\top$, and using Lemma~\ref{lem201} again, we have that
\begin{align*}
\|z_s\|\leq\,&\|x_s\|+\|u_s\|\leq k^3(1+\phi)\sqrt{60n \log 4T}+k^4(2+\phi)\sqrt{60n \log 4T}\\
\leq\,&2k^4(2+\phi)\sqrt{60n \log 4T}\leq \sqrt{\lambda},
\end{align*}
for $1\leq s \leq \tau_1$. Then by Lemma~\ref{lem105} in Appendix A., we have 
\begin{align}\label{eq401}
\log\frac{\det(W_{\tau_1})}{\det(\lambda I_{n+m})}\leq (n+m) \log T.
\end{align}
Using $\|z_s\|\leq \sqrt{\lambda}$ and since $W_{\tau_1}=\lambda I_{n+m} +\sum_{s=1}^{\tau_1-1}z_s z_s^\top$ we obtain
\[
\|W_{\tau_1}\| \leq \lambda +\sum_{s=1}^{\tau_1-1}\|z_s\|^2\leq \tau_1 \lambda
\]
and since $Y_{\tau_1}$ is the Schur complement of $W_{\tau_1}$, we have $Y_{\tau_1}\leq \tau_1 \lambda$, and therefore
\begin{align*}
\frac{3}{1-\gamma_1}\Tr(E_1 Y_{\tau_1}E_1^\top) \leq \,&\frac{3}{1-\gamma_1}\|Y_{\tau_1}\|\Tr(E_1 E_1^\top)\\
\leq \,& \frac{3\tau_1 \lambda}{1-\gamma_1}\frac{1-\gamma_1}{\tau_1}\leq 3\lambda,
\end{align*}
where we have used the assumption that $\Tr(E_1 E_1^\top)\leq \frac{1-\gamma_1}{\tau_1}$.
For the event $\mathcal{E}_\Delta$, we have 
\begin{align*}\label{eq301}
\|\Delta_{\tau_1}\|^2\leq \,& \Tr\Big(\Delta_{\tau_1}\Delta_{\tau_1}^\top\Big)\\
\leq \,&\frac{40(2+k^2)}{\tau_1 \sigma^2}\Tr\Big(\Delta_{\tau_1}\widehat{W}_{\tau_1}\Delta_{\tau_1}^\top\Big)\\
\leq \,& \frac{40(2+k^2)}{\tau_1 \sigma^2}\Big(6 n \sigma^2 \log\Big(4T^3\frac{\det(W_{\tau_1})}{\det(\lambda I)}\Big)+6\lambda\|(A_\star \ B_\star)\|_F^2\Big),
\end{align*}
where we have used~\eqref{eq701} in the second inequality and Lemma~\ref{lem107} in the last inequality. Using this and~\eqref{eq401}, we can write
\begin{align*}
\|\Delta_{\tau_1}\|^2\leq \,&\frac{2+k^2}{\tau_1}\Big(240n(n+m)\log(4T^4)+\frac{240\lambda(n+m)\phi^2}{\sigma^2}\Big)\\
\leq \,&\frac{(2+k^2)(n+m)}{\tau_1}\Big(960n\log(4T)+\frac{240\lambda\phi^2}{\sigma^2}\Big)\\
\leq \,&\frac{(2+k^2)(n+m)}{\tau_1}\frac{240\lambda(1+\phi^2) }{\sigma^2}\leq \frac{\epsilon_0^2\tau_1}{\tau_1}\leq \epsilon_0^2,
\end{align*}
where we have used the definition of $\tau_1$ in the last inequality.

We can now apply Lemma~\ref{lem42} in Appendix A.~to conclude that $K_{\tau_1}$ is $(k, l)$-strongly stable, with $k=\sqrt{\frac{\nu+C_0\epsilon_0^2}{\alpha_0\sigma^2}}$, and $l=\frac{1}{2k^2}$. This proves the statement~1 for $i=1$. 

Next we prove the statements of Lemma~\ref{lem210} for $i=2,\ldots, n_T$.
Assume that 
\begin{equation}\label{indh}
\widehat{W}_{\tau_s}\succeq \frac{\sigma^2\tau_{s}}{40((1+k^2)/\min\{p,1\}+1)}I_{n+m},
\end{equation} and $K_{\tau_s}$ is $(k,\ell)$-strongly stable for $s\in\{1,\ldots,i\}$, we prove that this is also true for $s=i+1$.

Note that $Y_{\tau_1}$ is the Schur complement of $W_{\tau_1}$ and we have that $Y_{\tau_1}\succeq \frac{\tau_1 \sigma^2}{40(2+k^2)}I_m$. Also note that $W_{\tau_{i+1}}\succeq W_{\tau_{i}}$ for all $i\geq 1$, since
\begin{align*}
W_{\tau_{i+1}}=\,& W_{\tau_{i}}+\begin{pmatrix}
\sum_{s=\tau_i}^{\tau_{i+1}-1}x_{s}x_{s}^\top \,& \sum_{s=\tau_i}^{\tau_{i+1}-1}x_{s}u_{s}^\top\\[10pt]
\sum_{s=\tau_i}^{\tau_{i+1}-1}u_{s}x_{s}^\top \,&	\sum_{s=\tau_i}^{\tau_{i+1}-1}u_{s}u_{s}^\top 
\end{pmatrix}\\
=\,& W_{\tau_{i}}+\sum_{s=\tau_i}^{\tau_{i+1}-1}\begin{pmatrix}
x_{s} \\
u_s 
\end{pmatrix}\begin{pmatrix}
x_s^\top \,& u_s^\top
\end{pmatrix}\\
\succeq \,& W_{\tau_{i}},
\end{align*}
and hence by Lemma~\ref{lem43} in Appendix A., we have that $Y_{\tau_{i+1}}\succeq Y_{\tau_{i}}$.

Now for $\widehat{W}_{\tau_{i+1}}$, we have that
\begin{align*}
\widehat{W}_{\tau_{i+1}}=\,& \begin{pmatrix}
\sum_{s=1}^{\tau_{i+1}-1}x_s x_s^\top + \lambda I_n \,& \sum_{s=1}^{\tau_{i+1}-1} x_s u_s^\top\\[10pt]
\sum_{s=1}^{\tau_{i+1}-1}u_s x_s^\top \,& \sum_{s=1}^{\tau_{i+1}-1} u_s u_s^\top + \lambda I_m + \frac{\gamma_{i+1}}{1-\gamma_{i+1}} Y_{\tau_{i+1}}
\end{pmatrix}\\[10pt]
\succeq \,& \begin{pmatrix}
\sum_{s=1}^{\tau_{i+1}-1}x_s x_s^\top + \lambda I_n \,& \sum_{s=1}^{\tau_{i+1}-1} x_s u_s^\top\\[10pt]
\sum_{s=1}^{\tau_{i+1}-1}u_s x_s^\top \,& \sum_{s=1}^{\tau_{i+1}-1} u_s u_s^\top + \lambda I_m + \frac{\gamma_{i+1}}{1-\gamma_{i+1}} Y_{\tau_i}
\end{pmatrix}\\[10pt]
= \,& \begin{pmatrix}
\sum_{s=1}^{\tau_{i}-1}x_s x_s^\top + \lambda I_n \,& \sum_{s=1}^{\tau_{i}-1} x_s u_s^\top\\[10pt]
\sum_{s=1}^{\tau_{i}-1}u_s x_s^\top \,& \sum_{s=1}^{\tau_{i}-1} u_s u_s^\top + \lambda I_m + \frac{\gamma_{i}}{1-\gamma_{i}} Y_{\tau_i}
\end{pmatrix}\\
\,&+\begin{pmatrix}
\sum_{s=\tau_i}^{\tau_{i+1}-1}x_s x_s^\top \,& \sum_{s=\tau_i}^{\tau_{i+1}-1} x_s u_s^\top\\[10pt]
\sum_{s=\tau_i}^{\tau_{i+1}-1}u_s x_s^\top \,& \sum_{s=\tau_i}^{\tau_{i+1}-1} u_s u_s^\top + \frac{\gamma_{i+1}-\gamma_i}{(1-\gamma_{i+1})(1-\gamma_i)} Y_{\tau_i}
\end{pmatrix}\\[10pt]
\succeq \,&\widehat{W}_{\tau_{i}} + \begin{pmatrix}
\sum_{s=\tau_i}^{\tau_{i+1}-1}x_s x_s^\top \,& \sum_{s=\tau_i}^{\tau_{i+1}-1} x_s u_s^\top\\[10pt]
\sum_{s=\tau_i}^{\tau_{i+1}-1}u_s x_s^\top \,& \sum_{s=\tau_i}^{\tau_{i+1}-1} u_s u_s^\top + \frac{\gamma_{i+1}-\gamma_i}{(1-\gamma_{i+1})(1-\gamma_i)} Y_{\tau_1}
\end{pmatrix}\\[10pt]
=\,&\widehat{W}_{\tau_{i}}+\begin{pmatrix}
I_n \\ K_{\tau_i}
\end{pmatrix}
\sum_{s=\tau_i}^{\tau_{i+1}-1}x_{s}x_{s}^\top \begin{pmatrix}
I_n \,& K_{\tau_i}^\top
\end{pmatrix}+\begin{pmatrix}
0 \,& 0\\[10pt]
0 \,&	\frac{\gamma_{i+1}-\gamma_i}{(1-\gamma_{i+1})(1-\gamma_i)} Y_{\tau_1}
\end{pmatrix}
\end{align*}
where we have used the fact that $Y_{\tau_{i+1}}\succeq Y_{\tau_i}$ for the first inequality and $Y_{\tau_i}\succeq Y_{\tau_1}$ and $u_s=K_{\tau_{i}} x_s$ for the last inequality and the last equality, respectively. Using Lemma~\ref{thm20} and the fact that $\mathbb{E}[x_s x_s^\top]\succeq \sigma^2 I_n$, for the event $\mathcal{E}_x$, we have $\sum_{s=\tau_{i}}^{\tau_{i+1}-1}x_{s}x_{s}^\top\succeq \frac{(\tau_{i+1}-\tau_i)\sigma^2}{40}I_n$, and hence we have that
\begin{align*}
\widehat{W}_{\tau_{i+1}}\succeq \,& \widehat{W}_{\tau_{i}}+\frac{(\tau_{i+1}-\tau_{i})\sigma^2}{40}\begin{pmatrix}
I_n \\ K_{\tau_i}
\end{pmatrix}\begin{pmatrix}
I_n \,& K_{\tau_i}^\top
\end{pmatrix}+\begin{pmatrix}
0 \,& 0\\[10pt]
0 \,&	\frac{\gamma_{i+1}-\gamma_i}{(1-\gamma_{i+1})(1-\gamma_i)} \frac{\tau_1 \sigma^2}{40(2+k^2)}I
\end{pmatrix}\\[10pt]
= \,& \widehat{W}_{\tau_{i}}+ \frac{\sigma^2(\tau_{i+1}-\tau_i)}{40}\begin{pmatrix}
I_n \,& K_{\tau_i}^\top\\[10pt]
K_{\tau_i} \,& K_{\tau_i}K_{\tau_i}^\top + \frac{\gamma_{i+1}-\gamma_i}{(1-\gamma_{i+1})(1-\gamma_i)}\frac{\tau_1}{(\tau_{i+1}-\tau_i)}\frac{1}{2+k^2}I_m
\end{pmatrix}.
\end{align*}
Now given that $\tau_i=r^{2(i-1)}\tau_1$, we have $\frac{\tau_1}{(\tau_{i+1}-\tau_i)}=\frac{1}{(r^2-1)r^{2(i-1)}}$. Now let $\gamma_i$ be such that $(1-\gamma_{i})\leq \frac{1}{r^{2i}}$ and $1-\gamma_{i+1}\leq \frac{1}{r^2}(1-\gamma_i)$. Then we have 
\begin{align*}
\frac{\gamma_{i+1}-\gamma_i}{(1-\gamma_{i+1})(1-\gamma_i)}=\,&\frac{1}{1-\gamma_{i+1}}-\frac{1}{1-\gamma_i}\\
\geq\,& \frac{r^2-1}{1-\gamma_i}\geq (r^2-1)r^{2i}.
\end{align*}
Now by defining $p=\frac{r^2}{(2+k^2)}$ we have that \[
p\leq \frac{\gamma_{i+1}-\gamma_i}{(1-\gamma_{i+1})(1-\gamma_i)}\frac{\tau_1}{(\tau_{i+1}-\tau_i)}\frac{1}{2+k^2},\]
and also
\begin{align*}
\widehat{W}_{\tau_{i+1}}\succeq \widehat{W}_{\tau_{i}}+ \frac{\sigma^2(\tau_{i+1}-\tau_i)}{40}\begin{pmatrix}
I_n \,& K_{\tau_i}^\top\\[10pt]
K_{\tau_i} \,& K_{\tau_i}K_{\tau_i}^\top + pI_m
\end{pmatrix}.
\end{align*}
By Lemma~\ref{lem2} and using the induction hypothesis~\eqref{indh}, we have that
\begin{align*}
\widehat{W}_{\tau_{i+1}}\succeq \,&\frac{\sigma^2\tau_i}{40((1+k^2)/\min\{p,1\}+1)}I_{n+m}+ \frac{\sigma^2(\tau_{i+1}-\tau_i)}{40((1+k^2)/p+1)}I_{n+m}\\
\succeq \,&\frac{\sigma^2\tau_{i+1}}{40((1+k^2)/\min\{p,1\}+1)}I_{n+m}.
\end{align*}

Next we show that $\|x_t\|^2\leq x_b$, $\|u_t\|^2\leq k^2 x_b$, and $\|z_t\|^2\leq \lambda$ for all $\tau_i\leq t < \tau_{i+1}$, and $\|\Delta_{\tau_{i+1}}\|\leq \epsilon_0 r^{i-1}$, where $x_b$ is given by~\eqref{x_b} and $\lambda=(1+k)^2 x_b$.
First, by induction hypothesis the algorithm does not abort and the controller uses the $(k,\ell)$-strongly stable feedback gain $K_{\tau_i}$ for $\tau_i\leq t<\tau_{i+1}$. We can hence use Lemma~\ref{lem39} in Appendix A.~to obtain 
\begin{align*}
\|x_t\|\leq 3k\max\Big\{\frac{1}{2}\|x_{\tau_1}\|, \frac{k}{\ell}\max_{1\leq s\leq T}\|w_s\|\Big\}, \quad \mathrm{for \ all \ } \tau_1\leq t< \tau_{i+1}.
\end{align*}
Letting $\ell=1/(2k^2)$, and using the bound for $\|x_{\tau_1}\|$ from Lemma~\ref{lem201}, we have 
\begin{align*}
\|x_t\|\leq k\sigma \max\big\{k^3(1+\phi), 2k^3\big\}\sqrt{135n \log 4T}\leq \sqrt{x_b}.
\end{align*}
Now for $t=\tau_i,\ldots,\tau_{i+1}-1 $, we have $u_t=K_{\tau_i}x_t$ and hence $\|u_t\|=k\sqrt{x_b}$. Therefore, for $t=\tau_i,\ldots,\tau_{i+1}-1 $, we have
\begin{align*}
\|z_t\|\leq \|x_t\|+\|u_t\|\leq (1+k)\sqrt{x_b}\leq \sqrt{\lambda}.
\end{align*}
Using Lemma~\ref{lem105}, we obtain 
\begin{align*}
\log\frac{\det(W_{\tau_{i+1}})}{\det(\lambda I_{n+m})}\leq (m+n)\log T.
\end{align*}
Given that $\|z_t\|^2\leq \lambda$ and using a similar argument used for $Y_{\tau_1}$, we have that $Y_{\tau_i}\leq \tau_i \lambda$, and hence
\begin{align*}
\frac{3}{1-\gamma_i}\Tr(E_i Y_{\tau_i}E_i^\top) \leq \,&\frac{3}{1-\gamma_i}\|Y_{\tau_i}\|\Tr(E_i E_i^\top)\\
\leq \,& \frac{3\tau_i \lambda}{1-\gamma_i}\frac{1-\gamma_i}{\tau_i}\leq 3\lambda,
\end{align*}
where we have used the assumption that $\Tr(E_i E_i^\top)\leq \frac{1-\gamma_i}{\tau_i}$.
Using a similar argument for $\Delta_{\tau_1}$, we obtain
\begin{align*}
\|\Delta_{\tau_{i+1}}\|^2\leq \,&\frac{((1+k^2)/\min\{p,1\}+1)}{\tau_{i+1}}\Big(160n(n+m)\log(4T^4)+\frac{80\lambda(n+m)\phi^2}{\sigma^2}\Big)\\
\leq \,&\frac{((1+k^2)/\min\{p,1\}+1)(n+m)}{\tau_{i+1}}\Big(640n\log(4T)+\frac{80\lambda\phi^2}{\sigma^2}\Big)\\
\leq \,&\frac{((1+k^2)/\min\{p,1\}+1)(n+m)}{\tau_{i+1}}\frac{80\lambda(1+\phi)^2 }{\sigma^2}\leq \frac{\epsilon_0^2\tau_1}{\tau_{i+1}}\leq \epsilon_0^2 r^{-2i}.
\end{align*}
Finally, by applying Lemma~\ref{lem42} in Appendix A., we conclude that $K_{\tau_{i+1}}$ is $(k,\ell)$-strongly stable.
\end{proof}

\begin{proof}[of Theorem~\ref{mthm}]
Let $\mathcal{E}=\mathcal{E}_x\cap \mathcal{E}_W \cap \mathcal{E}_w \cap \mathcal{E}_\eta \cap \mathcal{E}_\Delta$ be the event defined in~\eqref{eve:test1}. The regret can be written as 
\begin{align*}
\mathbb{E}[\mathcal{R}_T]={J}_1+ {J}_2 + {J}_3-TJ_*,
\end{align*}
where 
\begin{align*}
J_1=\,&\mathbb{E}\Big[\textbf{1}\{\mathcal{E}\}\sum_{i=1}^{n_T}\sum_{t=\tau_i}^{\tau_{i+1}-1}x_t^\top Q x_t+u_t^\top R u_t\Big], \\
J_2=\,&\mathbb{E}\Big[\textbf{1}\{\mathcal{E}^c\}\sum_{t=\tau_1}^{T}x_t^\top Q x_t+u_t^\top R u_t\Big], \\
J_3=\,&\mathbb{E}\Big[\sum_{t=0}^{\tau_{1}-1}x_t^\top Q x_t+u_t^\top R u_t\Big].
\end{align*}
In the following lemmas, we will bound each term.
\begin{lemma}\label{lemj1}
$J_1\leq TJ_*+n_T((r-1)C_0 \epsilon_0^2 \tau_1+4 \alpha_1 k^6 x_b)$,
where $C_0$ and $\epsilon_0$ are positive constants given in Lemma~\ref{lemm}, and $x_b$ is given in~\eqref{x_b}.
\end{lemma}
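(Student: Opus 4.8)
The plan is to analyze each epoch $[\tau_i,\tau_{i+1})$ separately, exploiting that on the event $\mathcal{E}$ the algorithm never aborts (Lemma~\ref{lem210}, items~1--2), so that throughout the epoch the controller plays $u_t=K_{\tau_i}x_t$, the closed loop obeys $x_{t+1}=(A_*+B_*K_{\tau_i})x_t+w_t$, and the stage cost equals $x_t^\top(Q+K_{\tau_i}^\top R K_{\tau_i})x_t$. Let $P_i\succ0$ solve the Lyapunov equation $P_i=(A_*+B_*K_{\tau_i})^\top P_i(A_*+B_*K_{\tau_i})+Q+K_{\tau_i}^\top R K_{\tau_i}$, so that $J(K_{\tau_i})=\sigma^2\Tr(P_i)$ and, in particular, $J_*=\sigma^2\Tr(P_*)$. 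The key device is the value-function telescoping identity: writing $V_i(x)=x^\top P_i x$ and substituting the closed-loop dynamics into the Lyapunov equation yields the pathwise relation
\[
x_t^\top(Q+K_{\tau_i}^\top R K_{\tau_i})x_t=V_i(x_t)-V_i(x_{t+1})+2x_t^\top(A_*+B_*K_{\tau_i})^\top P_i w_t+w_t^\top P_i w_t,
\]
which, summed over $\tau_i\le t<\tau_{i+1}$, telescopes the $V_i$ terms to $V_i(x_{\tau_i})-V_i(x_{\tau_{i+1}})$.

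Taking expectations over the epoch, the cross term $2x_t^\top(A_*+B_*K_{\tau_i})^\top P_i w_t$ is conditionally mean zero given $\mathcal{F}_t$ and $\mathbb{E}[w_t^\top P_i w_t\mid\mathcal{F}_t]=\sigma^2\Tr(P_i)$, so the noise contributes exactly $(\tau_{i+1}-\tau_i)\sigma^2\Tr(P_i)=(\tau_{i+1}-\tau_i)J(K_{\tau_i})$, while the boundary terms satisfy $V_i(x_{\tau_i})-V_i(x_{\tau_{i+1}})\le V_i(x_{\tau_i})$ since $P_i\succeq0$. I would then bound the three resulting pieces using the estimates already available on $\mathcal{E}$. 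First, since $\|\Delta_{\tau_i}\|\le\epsilon_0 r^{-i+1}$ (Lemma~\ref{lem210}, item~3) controls both $\|A_{\tau_i}-A_*\|$ and $\|B_{\tau_i}-B_*\|$, Lemma~\ref{lemm} gives $J(K_{\tau_i})\le J_*+C_0\|\Delta_{\tau_i}\|^2$; combined with the geometric relation between the epoch length $\tau_{i+1}-\tau_i$ and $\tau_i$ and the matching decay of $\|\Delta_{\tau_i}\|^2$, the excess cost per epoch obeys $(\tau_{i+1}-\tau_i)C_0\|\Delta_{\tau_i}\|^2\le(r-1)C_0\epsilon_0^2\tau_1$. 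Second, the strong stability of $K_{\tau_i}$ (Lemma~\ref{lem210}, item~1) bounds the Lyapunov solution through the geometric series $P_i=\sum_{j\ge0}\big((A_*+B_*K_{\tau_i})^\top\big)^j(Q+K_{\tau_i}^\top R K_{\tau_i})(A_*+B_*K_{\tau_i})^j$ together with $\|(A_*+B_*K_{\tau_i})^j\|\le k(1-\ell)^j$, $\ell=1/(2k^2)$, and $\|Q+K_{\tau_i}^\top R K_{\tau_i}\|\le\alpha_1(1+k^2)$, giving $\|P_i\|\le 4\alpha_1 k^6$; with the state bound $\|x_{\tau_i}\|^2\le x_b$ (Lemma~\ref{lem210}, item~2) this yields $V_i(x_{\tau_i})\le 4\alpha_1 k^6 x_b$.

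Summing over $i=1,\dots,n_T$ then finishes the argument: the infinite-horizon terms aggregate as $\sum_i(\tau_{i+1}-\tau_i)J_*\le TJ_*$ because the epochs cover at most $T$ steps, and each of the two corrections $(r-1)C_0\epsilon_0^2\tau_1$ and $4\alpha_1 k^6 x_b$ occurs $n_T$ times, producing the stated bound. The main obstacle is the interaction of the trajectory-dependent indicator $\mathbf{1}\{\mathcal{E}\}$ appearing in $J_1$ with the martingale cross term: because $\mathcal{E}$ depends on the whole horizon it is not $\mathcal{F}_t$-measurable, so one cannot naively discard $\mathbb{E}\big[\mathbf{1}\{\mathcal{E}\}\,2x_t^\top(A_*+B_*K_{\tau_i})^\top P_i w_t\big]$. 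This has to be handled carefully — for instance by replacing $\mathbf{1}\{\mathcal{E}\}$ with the predictable ``no-abort-so-far'' indicator, which dominates $\mathbf{1}\{\mathcal{E}\}$ and agrees with it on $\mathcal{E}$, and conditioning step by step — so that the noise term collapses to precisely $(\tau_{i+1}-\tau_i)J(K_{\tau_i})$. Securing this reduction is essential: any cruder estimate of $\mathbb{E}\big[\mathbf{1}\{\mathcal{E}\}\,w_t^\top P_i w_t\big]$ that retains the $\log T$ factor from $\mathcal{E}_w$ would inflate the leading term to order $T\log T$ and destroy the logarithmic regret.
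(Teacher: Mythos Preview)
Your approach is correct and is essentially the paper's argument with Lemma~\ref{lem40} unpacked: the per-epoch Lyapunov telescoping you write down is exactly what that lemma encapsulates, and your bound $\|P_i\|\le 4\alpha_1 k^6$ reproduces the $\tfrac{2\alpha_1 k^4}{\ell}=4\alpha_1 k^6$ boundary constant there.

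The one place where the paper is slightly cleaner is the indicator issue you flag. Rather than introducing a time-varying predictable ``no-abort-so-far'' process (which interacts awkwardly with the telescoping sum $V_i(x_t)-V_i(x_{t+1})$), the paper replaces $\mathbf{1}\{\mathcal{E}\}$ by the single epoch-initial indicator $\mathbf{1}\{\mathcal{E}_{\tau_i}\cap\mathcal{S}_{\tau_i}\}$, where $\mathcal{E}_{\tau_i}=\{\|\Delta_{\tau_i}\|\le\epsilon_0 r^{-i+1}\}$ and $\mathcal{S}_{\tau_i}=\{\|x_{\tau_i}\|^2\le x_b\}$; since $\mathcal{E}\subseteq\mathcal{E}_{\tau_i}\cap\mathcal{S}_{\tau_i}$ and the cost is nonnegative this is an upper bound, and because $\mathcal{E}_{\tau_i}\cap\mathcal{S}_{\tau_i}$ is determined by $(x_{\tau_i},A_{\tau_i},B_{\tau_i})$, one can condition on these and apply Lemma~\ref{lem40} to the (coupled) $K_{\tau_i}$-trajectory directly. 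This sidesteps the martingale-versus-indicator difficulty in one stroke and yields exactly the two pieces $(\tau_{i+1}-\tau_i)J(K_{\tau_i})$ and $4\alpha_1 k^6\|x_{\tau_i}\|^2$ you derived.
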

\begin{proof}
For each $i=1,\ldots, n_T$, we define the events $\mathcal{E}_{\tau_i}=\big\{\|\Delta_{\tau_i}\|\leq \epsilon_0 r^{-i+1}\big\}$ and $\mathcal{S}_{\tau_i}=\big\{\|x_{\tau_i}\|^2\leq x_b\big\}$. We have $\mathcal{E}\subset \mathcal{E}_{\tau_i}\cap \mathcal{S}_{\tau_i}$, and
\begin{align*}
\textbf{1}\{\mathcal{E}\}\sum_{t=\tau_i}^{\tau_{i+1}-1}x_t^\top Q x_t+u_t^\top R u_t\leq \textbf{1}\{\mathcal{E}_{\tau_i}\cap \mathcal{S}_{\tau_i}\}\sum_{t=\tau_i}^{\tau_{i+1}-1}x_t^\top (Q+K_{\tau_i}^\top R K_{\tau_i}) x_t
\end{align*}
Note that $\mathcal{E}_{\tau_i}$, $\mathcal{S}_{\tau_i}$ and $K_{\tau_i}$ are completely determined by $x_{\tau_i}$, $A_{\tau_i}$, and $B_{\tau_i}$. By computing a total expectation, we have that
\begin{align*}
\mathbb{E}\Big[\textbf{1}\{\mathcal{E}\}\sum_{t=\tau_i}^{\tau_{i+1}-1}x_t^\top Q x_t+u_t^\top R u_t\Big]\leq \mathbb{E}\Big[\textbf{1}\{\mathcal{E}_{\tau_i}\cap \mathcal{S}_{\tau_i}\} \mathbb{E}\Big[\sum_{t=\tau_i}^{\tau_{i+1}-1}x_t^\top (Q+K_{\tau_i}^\top R K_{\tau_i}) x_t\Big|x_{\tau_i},A_{\tau_i},B_{\tau_i}\Big]\Big]
\end{align*}
Using Lemma~\ref{lem42} in Appendix A., for the event $\mathcal{E}_{\tau_i}$, we conclude that $K_{\tau_i}$ is $(k,\ell)$-strongly stable. Therefore, by Lemma~\ref{lem40} in Appendix A., we have that
\begin{align*}
\mathbb{E}\Big[\textbf{1}\{\mathcal{E}\}\sum_{t=\tau_i}^{\tau_{i+1}-1}x_t^\top Q x_t+u_t^\top R u_t\Big]\leq \,& (\tau_{i+1}-\tau_i)\mathbb{E}[\textbf{1}\{\mathcal{E}_{\tau_i}\}J(K_{\tau_i})]+\frac{2\alpha_1 k^4}{\ell}\mathbb{E}[\textbf{1}\{\mathcal{S}_{\tau_i}\}\|x_{\tau_i}\|^2]\\
\leq \,&(\tau_{i+1}-\tau_i)\mathbb{E}[\textbf{1}\{\mathcal{E}_{\tau_i}\}J(K_{\tau_i})]+4 \alpha_1 k^6 x_b.
\end{align*}
Now by Lemma~\ref{lemm}, we obtain 
\begin{align*}
(\tau_{i+1}-\tau_i)\mathbb{E}[\textbf{1}\{\mathcal{E}_{\tau_i}\}J(K_{\tau_i})]\leq \,& (\tau_{i+1}-\tau_i)(J_*+C_0 \epsilon_0^2 r^{-i+1})\\
\leq \,& (\tau_{i+1}-\tau_i)J_*+(r-1)C_0 \epsilon_0^2 \tau_1
\end{align*}
Summing over $i$, and since $\sum_{i=1}^{n_T}\tau_{i+1}-\tau_i\leq T$, we conclude that
\begin{align*}
J_1\leq TJ_*+n_T((r-1)C_0 \epsilon_0^2 \tau_1+4 \alpha_1 k^6 x_b).
\end{align*}
\end{proof}

The next result can be proved using the same proof as in Lemma 9 of \cite{AC-AC-TK:2020}, and hence we omit the proof.
\begin{lemma}\label{lemj2}
$J_2\leq (J(K_0)+2\alpha_1 k^2 x_b)T^{-1}+8\alpha_1 k^8(1+8\phi^2)x_b T^{-2}$.
\end{lemma}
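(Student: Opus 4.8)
The quantity $J_2$ collects the cost accrued on the low-probability failure event $\mathcal{E}^c$, for which Lemma~\ref{lem:245} gives $\mathbb{P}(\mathcal{E}^c)\le T^{-2}$. The plan is to exploit two deterministic features built into Algorithm~\ref{alg}: before any abort the rule in Line~13 forces $\|x_t\|^2\le x_b$ and $\|K_{\tau_i}\|\le k$, whereas after an abort the controller reverts to the \emph{known} $(k,\ell)$-strongly stabilizing gain $K_0$. The first thing I would record is that an abort can occur only on $\mathcal{E}^c$: by Lemma~\ref{lem210}, on the good event $\mathcal{E}$ every state satisfies $\|x_t\|^2\le x_b$ and every $K_{\tau_i}$ is $(k,\ell)$-strongly stable, so neither trigger fires. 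Writing $\mathcal{A}$ for the abort event and $T_a$ for the (random) abort time, this gives $\mathcal{A}\subseteq\mathcal{E}^c$, and the post-abort cost is nonzero only on $\mathcal{A}$. I would then split the sum defining $J_2$ at $T_a$.

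For the pre-abort steps $\tau_1\le t<T_a$ the abort rule yields $\|x_t\|^2\le x_b$ and $u_t=K_{\tau_i}x_t$ with $\|K_{\tau_i}\|\le k$, so $c_t=x_t^\top Qx_t+u_t^\top Ru_t\le\alpha_1(1+k^2)\|x_t\|^2\le\alpha_1(1+k^2)x_b$ \emph{deterministically}. Summing the at most $T$ such steps and multiplying by $\mathbf{1}\{\mathcal{E}^c\}$ gives $\mathbb{E}[\mathbf{1}\{\mathcal{E}^c\}\sum_{t<T_a}c_t]\le T\alpha_1(1+k^2)x_b\,\mathbb{P}(\mathcal{E}^c)\le\alpha_1(1+k^2)x_bT^{-1}\le 2\alpha_1k^2x_bT^{-1}$, which produces the second summand of the claimed $T^{-1}$ term.

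For the post-abort steps I would use that, conditioned on $\mathcal{F}_{T_a}$, the trajectory evolves as the fixed strongly stable system $x_{t+1}=(A_*+B_*K_0)x_t+w_t$; since $T_a$ is a stopping time and $\mathbf{1}\{\mathcal{A}\}$ is $\mathcal{F}_{T_a}$-measurable, applying Lemma~\ref{lem40} (Appendix~A) to this fresh trajectory bounds $\mathbb{E}[\sum_{t=T_a}^T c_t\mid\mathcal{F}_{T_a}]\le T\,J(K_0)+\tfrac{2\alpha_1k^4}{\ell}\|x_{T_a}\|^2$, and with $\ell=1/(2k^2)$ the coefficient is $4\alpha_1k^6$. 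Taking expectation against $\mathbf{1}\{\mathcal{A}\}$ and using $\mathbb{P}(\mathcal{A})\le T^{-2}$ turns the linear term into $J(K_0)T^{-1}$. It remains to control $\mathbb{E}[\mathbf{1}\{\mathcal{A}\}\|x_{T_a}\|^2]$: the state one step before the abort is still bounded by $x_b$ and is driven by a gain of norm at most $k$, so one step of~\eqref{eq102} with $\|A_*+B_*K_{\tau_i}\|\le\phi(1+k)$ gives $\|x_{T_a}\|^2\le 2\phi^2(1+k)^2x_b+2\|w_{T_a-1}\|^2$; the $x_b$-part contributes a clean $O(\phi^2k^2x_b)\,T^{-2}$ and, after $(1+k)^2\le 4k^2$, feeds the final $T^{-2}$ term.

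The main obstacle is the residual noise term $\mathbb{E}[\mathbf{1}\{\mathcal{A}\}\|w_{T_a-1}\|^2]$: because both $\mathcal{A}$ and $T_a$ depend on the noise, the noise at the abort step is correlated with the rare event, and a naive Cauchy--Schwarz split $\sqrt{\mathbb{P}(\mathcal{A})}\sqrt{\mathbb{E}\max_t\|w_t\|^4}$ only buys $\sqrt{\mathbb{P}(\mathcal{A})}\le T^{-1}$ and would degrade the bound to order $T^{-1}$. To keep this contribution at order $T^{-2}$ I would instead intersect with the noise event $\mathcal{E}_w$: on $\mathcal{A}\cap\mathcal{E}_w$ the deterministic bound $\|w_{T_a-1}\|^2\le 15n\sigma^2\log 4T$ multiplies $\mathbb{P}(\mathcal{A})\le T^{-2}$, while on $\mathcal{E}_w^c$ I would estimate $\mathbb{E}[\mathbf{1}\{\mathcal{E}_w^c\}\max_t\|w_t\|^2]$ by directly integrating the Gaussian tail $\int\mathbb{P}(\max_t\|w_t\|^2>s)\,ds$ past the threshold $15n\sigma^2\log 4T$, which (unlike Cauchy--Schwarz) exploits the concentration of the maximum and again yields an $O(\mathrm{poly}\cdot\log T)\,T^{-2}$ estimate. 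Collecting the pre-abort $T^{-1}$ contribution, the $J(K_0)T^{-1}$ contribution, and the two $T^{-2}$ contributions gives the stated bound, in line with the proof of Lemma~9 in~\cite{AC-AC-TK:2020}.
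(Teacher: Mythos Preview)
The paper does not give its own proof of this lemma; it simply states that the argument is identical to that of Lemma~9 in~\cite{AC-AC-TK:2020} and omits it. Your proposal reconstructs precisely that argument---splitting the $\mathcal{E}^c$-cost at the abort time, bounding the pre-abort contribution deterministically via the Line~13 safeguard, handling the post-abort trajectory with Lemma~\ref{lem40}, and isolating the noise-at-abort term via the intersection with $\mathcal{E}_w$---so it is correct and coincides with the paper's (referenced) approach.
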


\begin{lemma}\label{lemj3}
$J_3\leq \tau_1(1+\phi^2) J(K_0)$.
\end{lemma}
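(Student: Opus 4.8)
The plan is to control the warm-up cost $J_3$ by comparing the perturbed exploration policy with the nominal policy $u_t=K_0x_t$, whose infinite-horizon cost is $J(K_0)\le\nu$. Substituting the warm-up control $u_t=K_0x_t+\eta_t$ into \eqref{eq102} gives the closed loop $x_{t+1}=(A_*+B_*K_0)x_t+B_*\eta_t+w_t$, so the exploration noise enters the dynamics through $B_*$ and the effective one-step process noise $\tilde w_t=w_t+B_*\eta_t$ has covariance $\sigma^2(I_n+B_*B_*^\top)$. Since $\|B_*\|\le\|B_*\|_F\le\phi$, this covariance is dominated by $(1+\phi^2)\sigma^2 I_n$, and this single inequality is the origin of the factor $(1+\phi^2)$.

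Next I would introduce the cost-to-go matrix $P_0\succ 0$ associated with $K_0$, namely the solution of $P_0=(Q+K_0^\top RK_0)+(A_*+B_*K_0)^\top P_0(A_*+B_*K_0)$, which exists because $K_0$ is $(k,\ell)$-strongly stabilizing by Lemma~\ref{Lem41} and which satisfies $J(K_0)=\sigma^2\Tr(P_0)$. Writing $S=Q+K_0^\top RK_0$ and $V_t=x_t^\top P_0x_t$, the Lyapunov identity $(A_*+B_*K_0)^\top P_0(A_*+B_*K_0)=P_0-S$ gives, after a one-step conditional expectation, $\mathbb{E}[V_{t+1}\mid\mathcal{F}_t]=V_t-x_t^\top Sx_t+\sigma^2\Tr\big(P_0(I_n+B_*B_*^\top)\big)$. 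Taking total expectations, telescoping over $t=0,\dots,\tau_1-1$ from the zero initial state, and discarding $-\mathbb{E}[V_{\tau_1}]\le 0$ via $P_0\succeq 0$, I obtain $\sum_{t=0}^{\tau_1-1}\mathbb{E}[x_t^\top Sx_t]\le\tau_1\sigma^2\Tr\big(P_0(I_n+B_*B_*^\top)\big)\le\tau_1(1+\phi^2)J(K_0)$, where the last step uses $\Tr(P_0B_*B_*^\top)\le\|B_*\|^2\Tr(P_0)\le\phi^2\Tr(P_0)$.

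Finally I would expand $u_t^\top Ru_t=x_t^\top K_0^\top RK_0x_t+2x_t^\top K_0^\top R\eta_t+\eta_t^\top R\eta_t$: the cross term has zero expectation because $\eta_t$ is independent of $x_t$, so $J_3$ reduces to $\sum_{t=0}^{\tau_1-1}\mathbb{E}[x_t^\top Sx_t]$ together with the direct exploration contribution, and combining with the telescoped estimate and $J(K_0)\le\nu$ yields the claimed bound. The main obstacle is precisely the accounting for the exploration perturbation, which acts twice: it propagates through $B_*$ into the state, inflating the stationary covariance, and it also feeds directly into the control cost. The clean multiplicative factor $(1+\phi^2)$ is obtained by bounding the effective process-noise covariance by $(1+\phi^2)\sigma^2I_n$, which absorbs these contributions into the $K_0$-cost budget, while the non-stationary transient of the warm-up is neutralized by starting from the zero state and exploiting $P_0\succeq 0$, so that the finite-horizon cost never exceeds $\tau_1$ times the noise-inflated stationary cost.
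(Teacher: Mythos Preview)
Your approach mirrors the paper's almost exactly: both substitute $u_t=K_0x_t+\eta_t$ into the dynamics, identify the effective process noise $\tilde w_t=w_t+B_*\eta_t$ with covariance $\sigma^2(I_n+B_*B_*^\top)\preceq(1+\phi^2)\sigma^2I_n$, and then bound the finite-horizon cost by $\tau_1$ times the inflated stationary cost, using $x_1=0$ to kill the transient. The only cosmetic difference is that the paper packages the Lyapunov telescoping into a citation of Lemma~\ref{lem40}, whereas you unfold that argument by hand via $P_0$ and $V_t=x_t^\top P_0x_t$; the content is the same.

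There is, however, a loose end that you surface but do not close, and which the paper simply elides. After expanding $u_t^\top Ru_t$, the cross term vanishes in expectation, but the residual $\mathbb{E}[\eta_t^\top R\eta_t]=\sigma^2\Tr(R)$ per step remains. Your telescoped estimate already consumes the full $(1+\phi^2)$ budget, since you bound $\sum_t\mathbb{E}[x_t^\top Sx_t]\le\tau_1\sigma^2\Tr\big(P_0(I_n+B_*B_*^\top)\big)$ and then use $\Tr(P_0B_*B_*^\top)\le\phi^2\Tr(P_0)$; there is no slack left to absorb $\tau_1\sigma^2\Tr(R)$. The paper avoids this by declaring the warm-up ``equivalent'' to running $u_t=K_0x_t$ under the inflated noise and then invoking Lemma~\ref{lem40}, which bounds $\sum_t x_t^\top(Q+K_0^\top RK_0)x_t$ rather than the true cost $\sum_t(x_t^\top Qx_t+u_t^\top Ru_t)$---so the same term is silently dropped there as well. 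For the overall theorem this is harmless (an extra $\tau_1\sigma^2\Tr(R)=O(\log T)$ is swallowed by the $\log^2 T$ bound), but as a proof of the lemma exactly as stated it is a gap; if you want the inequality to hold verbatim you would need either an additional additive term or a sharper comparison that ties $\Tr(R)$ back to $\Tr(P_0)$.
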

\begin{proof}
The total cost $J_3$ is the expected cost of warm-up part of Algorithm~1. Note that for this part, i.e., for $t=0,\ldots, \tau_1$, the controller uses $u_t=K_0 x_t+\eta_t$, and by plugging this into~\eqref{eq102}, we obtain
\[x_{t+1}=(A_*+B_*K_0)x_t+(B_*\eta_t+w_t).\]
This is equivalent to the system $(A_*,B_*)$ with noise covariance $\sigma^2(I+B_*B_*^\top)$, where the controller uses $u_t=K_0 x_t$. Let $J(K_0,W)$ be the infinite horizon cost for system~\eqref{eq102} with noise covariance $W$, where the controller uses $u_t=K_0 x_t$. Now we have that
\begin{equation}
J(K_0,\sigma^2(I+B_*B_*^\top))=\Tr(\sigma^2(I+B_*B_*^\top)P)\leq (1+\phi^2)\Tr(\sigma^2 P)=(1+\phi^2) J(K_0,\sigma^2).
\end{equation}
By applying Lemma~\ref{lem40} in Appendix A., we conclude that
\begin{align*}
J_3=\,&\mathbb{E}\Big[\sum_{t=0}^{\tau_{1}-1}x_t^\top Q x_t+u_t^\top R u_t\Big]\\
\leq\,&\tau_1 J(K_0,\sigma^2(I+B_*B_*^\top))+\frac{2\alpha_1 k^4}{\ell}\|x_1\|^2\\
\leq\,& \tau_1(1+\phi^2) J(K_0),
\end{align*}
where we have used the assumption that $x_1=0$.
\end{proof}
Now we apply Lemmas~\ref{lemj1}, \ref{lemj2}, and \ref{lemj3}, to conclude that
\begin{align}
\mathbb{E}[\mathcal{R}_T]=~\,&{J}_1+ {J}_2 + {J}_3-TJ_*\nonumber\\
\leq ~\,& n_T((r-1)C_0 \epsilon_0^2 \tau_1+4 \alpha_1 k^6 x_b)\nonumber\\
\,&+(J(K_0)+2\alpha_1 k^2 x_b)T^{-1}+8\alpha_1 k^8(1+8\phi^2)x_b T^{-2}\nonumber\\
\,&+\tau_1(1+\phi^2)\nu.\label{eq901}
\end{align}
Substituting the values of $\tau_1$, $x_b$, and $n_T$ in~\eqref{eq901}, we have that
\begingroup
\allowdisplaybreaks
\begin{align}
\mathbb{E}[\mathcal{R}_T]\leq\,& 135\frac{\log(T)}{\log(r)}\Big((r-1)C_0 (240(1+k^2)(1+\phi^2)(\frac{1+k^2}{\min\{p,1\}}+1)(n+m)+\sigma^2)\nonumber\\
\,&+4 \alpha_1 k^6\sigma^2\Big)nk^2\max\{(1+\phi)^2k^6,4k^6\}\log(4T)\nonumber\\
\,&+(J(K_0)+270\alpha_1 nk^4\sigma^2\max\{(1+\phi)^2k^6,4k^6\}\log(4T))T^{-1}\nonumber\\
\,&+1080\alpha_1 (1+8\phi^2)nk^{10}\sigma^2\max\{(1+\phi)^2k^6,4k^6\}\log(4T) T^{-2}\nonumber\\
\,&+135\frac{240(1+k^2)(1+\phi^2)((1+k^2)/\min\{p,1\}+1)(n+m)+\sigma^2}{\epsilon_0^2}\nonumber\\
\,&nk^2\max\{(1+\phi)^2k^6,4k^6\}\log(4T)(1+\phi^2)\nu.\nonumber
\end{align}
\endgroup
By rearranging the terms we get 
\begingroup
\allowdisplaybreaks
\begin{align}
\mathbb{E}[\mathcal{R}_T]\leq\,& \frac{135}{\log(r)}\Big((r-1)C_0 (240(1+k^2)(1+\phi^2)(\frac{1+k^2}{\min\{p,1\}}+1)(n+m)+\sigma^2)\nonumber\\
\,&+4 \alpha_1 k^6\sigma^2\Big)nk^2\max\{(1+\phi)^2k^6,4k^6\}\log^2(T)\nonumber\\
\,&+135\frac{240(1+k^2)(1+\phi^2)((1+k^2)/\min\{p,1\}+1)(n+m)+\sigma^2}{\epsilon_0^2}\nonumber\\
\,&~~nk^2\max\{(1+\phi)^2k^6,4k^6\}(1+\phi^2)\nu\log(4T) \nonumber\\
\,&+\frac{135}{\log(r)}\Big((r-1)C_0 (240(1+k^2)(1+\phi^2)(\frac{1+k^2}{\min\{p,1\}}+1)(n+m)+\sigma^2)\nonumber\\
\,&+4 \alpha_1 k^6\sigma^2\Big)nk^2\max\{(1+\phi)^2k^6,4k^6\}\log(4)\log(T)\nonumber\\
\,&+(\nu+270\alpha_1 nk^4\sigma^2\max\{(1+\phi)^2k^6,4k^6\}\log(4T))T^{-1}\nonumber\\
\,&+1080\alpha_1 (1+8\phi^2)nk^{10}\sigma^2\max\{(1+\phi)^2k^6,4k^6\}\log(4T) T^{-2}.\label{eq:regs1}
\end{align}
\endgroup
Overall, noting that the dominant term is $\log^2(T)$ in the first two lines of~\eqref{eq:regs1}, we conclude that \[\mathbb{E}[\mathcal{R}_T]\leq \mathrm{poly} (\alpha_0, \alpha_1, \phi, \nu, m, n , r)\log^2(T).\]

\end{proof}

\section{Hint for $A_*$}\label{sec:Ahint}
In this section, we present an alternative algorithm for the case where a hint about $A_*$ is given to the controller and $B_*$ is unknown. In this scenario, we need the extra assumption that $K_*K_*^\top\succ 0$. The algorithm is similar to Algorithm~\ref{alg}, with the extra warm-up period to ensure that the feedback gain $K_{\tau_i}$ achieved by the estimate $A_{\tau_i}$ and $B_{\tau_i}$ has the property that $K_{\tau_i}K_{\tau_i}^\top \succeq \mu I$ where $\mu>0$ is given in the algorithm. Similar to the hint for $B_*$ in Algorithm~\ref{alg}, the hint for $A_*$ is a noisy information towards $A_*$ that helps the controller get better estimates of system parameters. To make it more precise, after the warm-up period, the controller gets an estimate $(\widehat{A}_{\tau_i} \ \widehat{B}_{\tau_i})$ using a regularized least squares method. Then the controller receives the hint and updates a new estimate $A_{\tau_i}$ using
\[A_{\tau_i}=\widehat{A}_{\tau_i}-\gamma_i(\widehat{A}_{\tau_i}-A_*)+E_i,\]
where $0<\gamma_i<1$ and $E_i\in\real^{n \times n}$ is some error. The controller uses $A_{\tau_i}$ and the least squares method to update $B_{\tau_i}$. With the new estimates $A_{\tau_i}$ and $B_{\tau_i}$, the controller updates the feedback gain $K_{\tau_i}$. The pseudo-code of the algorithm is given in Algorithm~\ref{alg2}.

\begin{algorithm}
	\caption{\textbf{Online Adaptive Control with Hint for $A_{*}$}}
	\label{alg2}
\begin{algorithmic}[1]
  \REQUIRE a stabilizing controller $K_0$, time horizon $T$, time window parameter  $\tau_1$, $k$, $x_b$, $\lambda$, $\mu_1$
  \STATE \textbf{Initialize} $n_T=\lfloor\log_4(T/\tau_1)\rfloor \qquad \tau_{n_{T+1}}=T+1$ 
  \STATE set $\tau_i=\tau_1 4^{i-1}$ and $\mu_i=\mu_1 2^{i-1}$  for all $i=1,\ldots,n_T$
  \FOR{each $t=1,\ldots,\tau_1-1$}
  	\STATE receive $x_t$
  	\STATE use controller $u_t=K_0 x_t+\eta_t$ 
  \ENDFOR	
  \FOR{each $i=1,2,\ldots,n_T$}
  	\STATE $(\widehat{A}_{\tau_i} \ \widehat{B}_{\tau_i})=\argmin_{(A \ B)}\sum_{t=1}^{\tau_i-1}\|x_{t+1}-Ax_t-Bu_t\|^2+\lambda\|(A \ B)\|_F^2$
  	\STATE receive hint and update $A_{\tau_i}=\widehat{A}_{\tau_i}-\gamma_i(\widehat{A}_{\tau_i}-A_*)+E_i$
  	\STATE update $B_{\tau_i}=\argmin_{B}\sum_{t=1}^{\tau_i-1}\|x_{t+1}-A_{\tau_i}x_{t}-Bu_t\|^2+\lambda\|B\|_F^2$
  	\STATE $K_{\tau_i}=\mathrm{dare}(A_{\tau_i},B_{\tau_i},Q,R)$
  	\IF{$K_{\tau_i}K_{\tau_i}^\top\succeq 3\mu_i/2 I$ then}
  		\STATE set $n_s=i$ and break
  		\FOR{each $t=\tau_i,\ldots,\tau_{i+1}-1$}
  			\STATE play $u_t=K_0 x_t + \eta_t$
  		\ENDFOR
  	\ENDIF
  \ENDFOR
  \FOR{each $i=n_s,\ldots,n_T$}
  	\STATE $(\widehat{A}_{\tau_i} \ \widehat{B}_{\tau_i})=\argmin_{(A \ B)}\sum_{t=1}^{\tau_i-1}\|x_{t+1}-Ax_t-Bu_t\|^2+\lambda\|(A \ B)\|_F^2$
  	\STATE receive hint and update $A_{\tau_i}=\widehat{A}_{\tau_i}-\gamma_i(\widehat{A}_{\tau_i}-A_*)+E_i$
  	\STATE update $B_{\tau_i}=\argmin_{B}\sum_{t=1}^{\tau_i-1}\|x_{t+1}-A_{\tau_i}x_{t}-Bu_t\|^2+\lambda\|B\|_F^2$
  	\STATE $K_{\tau_i}=\mathrm{dare}(A_{\tau_i},B_{\tau_i},Q,R)$
  \FOR{each $t=\tau_i,\ldots,\tau_{i+1}-1$}
  	\IF{$\|x_t\|^2>x_b$ or $\|K_{\tau_i}\|>k$ then} 
  		\STATE abort and play $u_t=K_0 x_t$ until $t=T$
  	\ELSE
  	\STATE play $u_t=K_{\tau_i}x_t$
  	\ENDIF
  \ENDFOR
  \ENDFOR	
\end{algorithmic}
\end{algorithm}

Similar to Theorem~\ref{mthm}, in this case we can achieve a poly-logarithmic regret as the next theorem shows.
\begin{theorem}\label{thm2}
Let Algorithm~\ref{alg2} be run with parameters
\begin{align*}
k=\sqrt{\frac{\nu+\epsilon_0^2 C_0}{\alpha_0\sigma^2}}, \tau_1=\left\lceil\frac{240\lambda(1+\phi^2)\max\{(2+k^2),\frac{1}{2\mu_*p}(\mu_*+2p+2)\}(n+m)}{\epsilon_0^2\sigma^2}\right\rceil\\
x_b=135 n k^2 \sigma^2 \max\Big\{(1+\phi)^2k^6,4k^6\Big\}\log(4T), \lambda=(1+k)^2 x_b, p=\frac{4}{2+k^2}, \mu_1=4kC_0 \epsilon_0
\end{align*}
and assume $0\leq 1-\gamma_1\leq \frac{1}{4}$, $\gamma_i$ satisfies $(1-\gamma_{i+1})\leq \frac{1}{4}(1-\gamma_i)$, and $\|E_i\|^2_F\leq \frac{1-\gamma_i}{\tau_i}$. Then for $T\geq \mathrm{poly}(\alpha_0, \alpha_1, \phi, \nu, m, n , r)$ we have \[\mathbb{E}[\mathcal{R}_T]\leq \mathrm{poly} (\alpha_0, \alpha_1, \phi, \nu, m, n , r)\log^2(T).\] In particular,

\begin{align}
\mathbb{E}[\mathcal{R}_T]\leq\,& 135\Big(3C_0 (240(1+k^2)(1+\phi^2)\max\{(2+k^2),\frac{1}{2\mu_*p}(\mu_*+2p+2)\}(n+m)+\sigma^2)\nonumber\\
\,&+4 \alpha_1 k^6\sigma^2+\alpha_1 m \sigma^2 k^{8}\Big)nk^2\max\{(1+\phi)^2k^6,4k^6\}\log^2(T)\nonumber\\
\,&+135\frac{240(1+k^2)(1+\phi^2)\max\{(2+k^2),\frac{1}{2\mu_*p}(\mu_*+2p+2)\}(n+m)+\sigma^2}{\epsilon_0^2}\nonumber\\
\,&~~nk^2\max\{(1+\phi)^2k^6,4k^6\}(1+\phi^2)\nu\log(4T) \nonumber\\
\,&+{135}\Big(3C_0 (240(1+k^2)(1+\phi^2)\max\{(2+k^2),\frac{1}{2\mu_*p}(\mu_*+2p+2)\}(n+m)+\sigma^2)\nonumber\\
\,&+4 \alpha_1 k^6\sigma^2\Big)nk^2\max\{(1+\phi)^2k^6,4k^6\}\log(4)\log(T)\nonumber\\
\,&+(\nu+270\alpha_1 nk^4\sigma^2\max\{(1+\phi)^2k^6,4k^6\}\log(4T))T^{-1}\nonumber\\
\,&+1080\alpha_1 (1+8\phi^2)nk^{10}\sigma^2\max\{(1+\phi)^2k^6,4k^6\}\log(4T) T^{-2}.\nonumber
\end{align}
\end{theorem}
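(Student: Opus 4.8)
The plan is to run the argument of Theorem~\ref{mthm} essentially verbatim, transposing every ingredient from the $B$-hint to the $A$-hint setting, and to isolate the single genuinely new difficulty: persistency of excitation in the \emph{control} directions.

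First I would prove the analogue of Lemma~\ref{Prop1}. Because the hint now refines $A_*$ and $B_{\tau_i}$ is the least-squares fit computed given $A_{\tau_i}$ (Lines 8--9 of Algorithm~\ref{alg2}), the two block-rows swap roles relative to Algorithm~\ref{alg}. Writing $\tilde V_{\tau_i}$ for the Schur complement of the control block $\sum_s u_s u_s^\top+\lambda I_m$ and setting
\[
\widehat W_{\tau_i}=\begin{pmatrix}\sum_s x_s x_s^\top+\tfrac{\gamma_i}{1-\gamma_i}\tilde V_{\tau_i} & \sum_s x_s u_s^\top\\ \sum_s u_s x_s^\top & \sum_s u_s u_s^\top\end{pmatrix}+\lambda I_{n+m},
\]
I would establish $(A_{\tau_i}\ B_{\tau_i})=(A_*\ B_*)-\lambda(A_*\ B_*)\widehat W_{\tau_i}^{-1}+(\sum_s w_s(x_s^\top\ u_s^\top))\widehat W_{\tau_i}^{-1}+(\tfrac{1}{1-\gamma_i}E_i\tilde V_{\tau_i}\ \ 0)\widehat W_{\tau_i}^{-1}$, the hint correction now sitting in the state block. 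The block-inverse computation of Lemma~\ref{Prop1} transfers after interchanging $V_{\tau_i}$ with $\sum_s u_s u_s^\top+\lambda I_m$ and $Y_{\tau_i}$ with $\tilde V_{\tau_i}$, and the bound on $\Tr(\Delta_{\tau_i}\widehat W_{\tau_i}\Delta_{\tau_i}^\top)$ then follows from Lemma~\ref{thm1} exactly as in Lemma~\ref{lem107}.

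The heart of the proof is the lower bound on $\widehat W_{\tau_i}$, which now requires the controls $u_s=K_{\tau_i}x_s$ to span $\real^m$. In Theorem~\ref{mthm} the driving noise alone gave $\mathbb{E}[x_s x_s^\top]\succeq\sigma^2 I_n$; here I would instead combine $\mathbb{E}[u_s u_s^\top\mid\mathcal{F}_{s-1}]\succeq\sigma^2 K_{\tau_i}K_{\tau_i}^\top$ with the certified bound $K_{\tau_i}K_{\tau_i}^\top\succeq\mu_i I_m$, feed both into an analogue of Lemma~\ref{lem2} whose control block is bounded below by $\mu_i$, and apply Lemma~\ref{thm20} to obtain $\widehat W_{\tau_i}\succeq c\,\sigma^2\tau_i I_{n+m}$ with an explicit $c=c(\mu_*,p,k)$ matching the factor $\tfrac{1}{2\mu_* p}(\mu_*+2p+2)$ in the statement. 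The certification is exactly what the detection loop of Algorithm~\ref{alg2} provides: the standing assumption $K_*K_*^\top\succ0$ forces $K_{\tau_i}K_{\tau_i}^\top\to K_*K_*^\top\succeq\mu_* I_m$ as $\|\Delta_{\tau_i}\|\to0$ via Lemma~\ref{lemm}, so the test $K_{\tau_i}K_{\tau_i}^\top\succeq\tfrac32\mu_i I$ must pass at some round; since $\|K_{\tau_i}\|\le k$ caps the admissible $\mu_i$ at $\tfrac23 k^2$ while $\mu_i=\mu_1 2^{i-1}$ doubles, the stopping index $n_s$ is bounded independently of $T$, so the exploratory cost incurred during the detection loop---bounded through $\|x_t\|^2\le x_b$ over its boundedly many periods and contributing the new $\alpha_1 m\sigma^2 k^8$ factor---stays within the $\log^2 T$ budget.

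With these pieces in hand I would carry out the same simultaneous induction as Lemma~\ref{lem210}, proving at each $\tau_i$ that $K_{\tau_i}$ is $(k,\ell)$-strongly stable, that $\|x_t\|^2\le x_b$, that $\|\Delta_{\tau_i}\|\le\epsilon_0\,r^{-i+1}$ at the geometric rate $r=2$ encoded by $\tau_i=\tau_1 4^{i-1}$, and the $\widehat W_{\tau_i}$ lower bound; the base case and inductive step are identical to the $B$-hint case with control-block excitation replacing state-block excitation. Finally I would decompose the regret as in the proof of Theorem~\ref{mthm} into a warm-up term, a high-probability term over the detection and exploitation periods, and a low-probability remainder controlled on $\mathcal{E}^c$: on the detection periods the per-step cost is bounded by that of $K_0$ plus the exploration overhead (yielding the $\alpha_1 m\sigma^2 k^8$ contribution), while on the exploitation periods I reuse Lemma~\ref{lemm} and strong stability exactly as in Theorem~\ref{mthm}; summing over the $n_T=O(\log T)$ periods and collecting constants gives the displayed bound. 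The main obstacle throughout is the control-direction excitation: the schedule $\mu_i$, the certification $K_{\tau_i}K_{\tau_i}^\top\succeq\mu_i I$, and the bound on $n_s$ all exist solely to recover, through active exploration, the persistency of excitation that the process noise supplied for free in the $B$-hint setting.
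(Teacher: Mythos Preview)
Your overall architecture is exactly that of the paper: swap the block roles in Lemma~\ref{Prop1}, rerun Lemma~\ref{lem107}, replace the state-excitation lemma by a control-excitation lemma driven by $K_{\tau_i}K_{\tau_i}^\top\succeq\mu I$, and then repeat the induction of Lemma~\ref{lem210} and the $J_1+J_2+J_3$ decomposition. That part is fine.

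The gap is in your argument for bounding $n_s$. You write that the test $K_{\tau_i}K_{\tau_i}^\top\succeq\tfrac32\mu_i I$ ``must pass at some round'' because $K_{\tau_i}K_{\tau_i}^\top\to K_*K_*^\top$, and then that $n_s$ is bounded because $\|K_{\tau_i}\|\le k$ caps the admissible $\mu_i$ at $\tfrac23 k^2$ while $\mu_i$ doubles. These two pieces do not combine: with an increasing threshold $\tfrac32\mu_i$, convergence $K_{\tau_i}\to K_*$ is not enough to guarantee the test ever passes, and your second observation only says that \emph{if} the test passes then $i$ is bounded---it does nothing to rule out the test never passing. In other words, you have shown the passing window is finite, not that the test succeeds inside it.

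The paper instead invokes the analysis of \cite[Lemma~23]{AC-AC-TK:2020} (its Lemma~\ref{lem:513}), which pins down $n_s$ up to an additive constant as $\max\{1,\log_2(\mu_1/\mu_*)\}$. The mechanism is quantitative, not asymptotic: during the extended warm-up one has the explicit rate $\|\Delta_{\tau_i}\|\le\epsilon_0 2^{-i+1}$ (your Lemma~\ref{lem:505} analogue), which through Lemma~\ref{lem42} gives a concrete lower bound $K_{\tau_i}K_{\tau_i}^\top\succeq (\mu_*-2kC_0\epsilon_0 2^{-i+1})I$ at each round, and this is compared against the threshold schedule to locate $n_s$ exactly. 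You need this quantitative matching of the estimation rate against the threshold schedule; the qualitative convergence $K_{\tau_i}\to K_*$ together with the cap $\|K_{\tau_i}\|\le k$ is not sufficient.
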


Similar to Lemma~\ref{Prop1}, we have the following result.
\begin{lemma}
Let $\{x_t, u_t\}_{t=1}^T$ be the sequence of states and actions generated using the Algorithm~\ref{alg2}. If $(A_{\tau_i} \ B_{\tau_i})$ are generated by Algorithm~\ref{alg2}, then we have
\begin{align}\label{eq:843}
(A_{\tau_i} \ B_{\tau_i})=(A_* \ B_*)-\lambda(A_* \ B_*)Z_{\tau_i}+\big(\sum_{s=1}^{\tau_i-1}w_s(x_s^\top \ u_s^\top)\big)Z_{\tau_i}+(\frac{E_i}{1-\gamma_i}Y_{\tau_i} \ 0)Z_{\tau_i},
\end{align}
where 
\begin{align*}
Z_{\tau_i}=\begin{pmatrix} \sum_{s=1}^{\tau_i-1}x_s x_s^\top + \lambda I + \frac{\gamma_i}{1-\gamma_i}Y_{\tau_i} & \sum_{s=1}^{\tau_i-1}x_s u_s^\top \\[10pt]
 \sum_{s=1}^{\tau_i-1}u_s x_s^\top & \sum_{s=1}^{\tau_i-1}u_s u_s^\top + \lambda I
\end{pmatrix}^{-1}
\end{align*}
and 
\begin{align*}
V_{\tau_i}=&\sum_{s=1}^{\tau_i-1}u_s u_s^\top + \lambda I_m, \\
Y_{\tau_i}=&\sum_{s=1}^{\tau_i-1}x_s x_s^\top +\lambda I_n -(\sum_{s=1}^{\tau_i-1}x_s u_s^\top)V_{\tau_i}^{-1}(\sum_{i=1}^{\tau_i-1}u_s x_s^\top).
\end{align*}
\end{lemma}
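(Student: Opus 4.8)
The plan is to reproduce the proof of Lemma~\ref{Prop1} essentially verbatim, but with the roles of the state block and the action block (equivalently, of $A$ and $B$) interchanged. The reason this works is structural: Algorithm~\ref{alg2} differs from Algorithm~\ref{alg} only in that it first updates $A_{\tau_i}$ from the hint, $A_{\tau_i}=(1-\gamma_i)\widehat{A}_{\tau_i}+\gamma_i A_*+E_i$, and then refits $B_{\tau_i}$ by regularized least squares holding $A_{\tau_i}$ fixed, which is exactly the reverse of the order in Algorithm~\ref{alg}. Consequently the pivot of the block-inverse and the Schur complement swap sides, which is precisely why the statement now takes $V_{\tau_i}=\sum_{s}u_s u_s^\top+\lambda I_m$ as the pivot and $Y_{\tau_i}$ as the Schur complement of the state block, and why the hint correction $\tfrac{1}{1-\gamma_i}E_i Y_{\tau_i}$ appears in the first (state) slot of~\eqref{eq:843} rather than the second.

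First I would start from the unconstrained least-squares estimate, which has the same closed form as~\eqref{eq101}, namely $(\widehat{A}_{\tau_i}\ \widehat{B}_{\tau_i})=\big(\sum_{s=1}^{\tau_i-1}x_{s+1}x_s^\top\ \ \sum_{s=1}^{\tau_i-1}x_{s+1}u_s^\top\big)W_{\tau_i}^{-1}$, and invert $W_{\tau_i}$ by the block formula pivoting on the action block $V_{\tau_i}$, so that the emerging Schur complement is exactly the $Y_{\tau_i}$ of the statement; this is the analog of~\eqref{Winv} with the $Y$-block moved to the top-left corner. I would then read off $\widehat{A}_{\tau_i}$ as the first block-column of the product, substitute it into the hint relation to obtain $A_{\tau_i}$ (the analog of~\eqref{Btau}), and plug this into the least-squares formula $B_{\tau_i}=\big(\sum_{s=1}^{\tau_i-1}(x_{s+1}-A_{\tau_i}x_s)u_s^\top\big)V_{\tau_i}^{-1}$ to get the analog of~\eqref{Atau}. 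Collecting $A_{\tau_i}$ and $B_{\tau_i}$ into a single row and factoring exactly as in~\eqref{eq113} yields $(A_{\tau_i}\ B_{\tau_i})=\big(\sum_{s}x_{s+1}x_s^\top+\tfrac{1}{1-\gamma_i}(\gamma_i A_*+E_i)Y_{\tau_i}\ \ \sum_{s}x_{s+1}u_s^\top\big)Z_{\tau_i}$, with the hint correction now in the state slot. Substituting the dynamics~\eqref{eq102}, $x_{s+1}=A_*x_s+B_*u_s+w_s$, and separating the $(A_*\ B_*)$-term, the noise term $\big(\sum_{s}w_s(x_s^\top\ u_s^\top)\big)Z_{\tau_i}$, and the error term $\big(\tfrac{1}{1-\gamma_i}E_i Y_{\tau_i}\ 0\big)Z_{\tau_i}$ then gives~\eqref{eq:843}.

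The one identity that must be verified, exactly as~\eqref{eq:yhat} in the proof of Lemma~\ref{Prop1}, is that the Schur complement of the bottom-right ($V_{\tau_i}$) block of $Z_{\tau_i}^{-1}$ equals $\tfrac{1}{1-\gamma_i}Y_{\tau_i}$: indeed the top-left block of $Z_{\tau_i}^{-1}$ is $\sum_s x_s x_s^\top+\lambda I+\tfrac{\gamma_i}{1-\gamma_i}Y_{\tau_i}$, and subtracting $(\sum_s x_s u_s^\top)V_{\tau_i}^{-1}(\sum_s u_s x_s^\top)$ collapses the $Y_{\tau_i}+\tfrac{\gamma_i}{1-\gamma_i}Y_{\tau_i}$ into $\tfrac{1}{1-\gamma_i}Y_{\tau_i}$. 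This is what makes the coefficient of $(A_*\ B_*)$ reduce to the clean $-\lambda(A_*\ B_*)Z_{\tau_i}$ form, since the matrix multiplying $(A_*\ B_*)$ is precisely $Z_{\tau_i}^{-1}-\lambda I$. I expect this bookkeeping — keeping track of which block is the pivot, ensuring the off-diagonal terms transpose correctly, and confirming the correction lands in the state rather than action slot — to be the only real difficulty; there is no new analytic obstacle, as the argument is a mirror image of the already-established Lemma~\ref{Prop1}, and the remaining steps are routine block-matrix algebra of identical structure.
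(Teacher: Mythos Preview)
Your proposal is correct and follows essentially the same approach as the paper's own proof: both start from the closed-form ridge estimate~\eqref{eq:101}, invert $W_{\tau_i}$ via the block formula pivoting on $V_{\tau_i}=\sum_s u_s u_s^\top+\lambda I_m$ so that $Y_{\tau_i}$ is the state-block Schur complement, read off $\widehat{A}_{\tau_i}$, apply the hint update to obtain $A_{\tau_i}$, refit $B_{\tau_i}$ by least squares, collect into $(A_{\tau_i}\ B_{\tau_i})=\big(\sum_s x_{s+1}x_s^\top\ \ \sum_s x_{s+1}u_s^\top\big)Z_{\tau_i}+\big(\tfrac{\gamma_i A_*+E_i}{1-\gamma_i}Y_{\tau_i}\ \ 0\big)Z_{\tau_i}$, identify $Z_{\tau_i}$ as the displayed inverse via the Schur identity $\widehat{Y}_{\tau_i}=\tfrac{1}{1-\gamma_i}Y_{\tau_i}$, and finally substitute the dynamics to separate the $(A_*\ B_*)$, noise, and error terms.
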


\begin{proof}
From Line 7 of Algorithm~\ref{alg2}, we have
\begin{align}
(\widehat{A}_{\tau_i} \ \widehat{B}_{\tau_i})=&\argmin_{(A \ B)}\sum_{t=1}^{\tau_i-1}\|x_{t+1}-Ax_t-Bu_t\|^2+\lambda\|(A \ B)\|_F^2\nonumber\\
=&\sum_{s=1}^{\tau_i-1}(x_{s+1}x_s^\top \ x_{s+1}u_s^\top)W_{\tau_i}^{-1},\label{eq:101}
\end{align}
where 
\begin{align*}
W_{\tau_i}^{-1}=&\begin{pmatrix} \sum_{s=1}^{\tau_i-1}x_s x_s^\top + \lambda I_n & \sum_{s=1}^{\tau_i-1}x_s u_s^\top \\
\sum_{s=1}^{\tau_i-1}u_s x_s^\top & \sum_{s=1}^{\tau_i-1}u_s u_s^\top +\lambda I_m
\end{pmatrix}^{-1}\\[10pt]
=&\begin{pmatrix}Y_{\tau_i}^{-1} & - Y_{\tau_i}^{-1}(\sum_{s=1}^{\tau_i-1}x_s u_s^\top) V_{\tau_i}^{-1} \\[10pt]
 - V_{\tau_i}^{-1}(\sum_{s=1}^{\tau_i-1}u_s x_s^\top) Y_{\tau_i}^{-1} & V_{\tau_i}^{-1} + V_{\tau_i}^{-1}(\sum_{s=1}^{\tau_i-1}u_s x_s^\top) Y_{\tau_i}^{-1}(\sum_{s=1}^{\tau_i-1}x_s u_s^\top) V_{\tau_i}^{-1} 
\end{pmatrix},
\end{align*}
and where 
\begin{align*}
V_{\tau_i}=&\sum_{s=1}^{\tau_i-1}u_s u_s^\top + \lambda I_m, \\
Y_{\tau_i}=&\sum_{s=1}^{\tau_i-1}x_s x_s^\top +\lambda I_n -(\sum_{s=1}^{\tau_i-1}x_s u_s^\top)V_{\tau_i}^{-1}(\sum_{i=1}^{\tau_i-1}u_s x_s^\top).
\end{align*}
Using \eqref{eq:101}, we have
\begin{align}
\widehat{A}_{\tau_i}=(\sum_{s=1}^{\tau_i-1}x_{s+1}x_s^\top)Y_{\tau_i}^{-1}-(\sum_{s=1}^{\tau_i-1}x_{s+1}u_s^\top)V_{\tau_i}^{-1}(\sum_{s=1}^{\tau_i-1}u_s x_s^\top)Y_{\tau_i}^{-1}\label{eq:102}.
\end{align}
From Line 8 of Algorithm~\ref{alg2} and \eqref{eq:102}, we have
\begin{align}
A_{\tau_i}=&\widehat{A}_{\tau_i}-\gamma_i(\widehat{A}_{\tau_i}-A_*)+E_i\nonumber\\
=&(1-\gamma_i)\widehat{A}_{\tau_i}+\gamma_i A_*+E_i\nonumber\\
=&(1-\gamma_i)(\sum_{s=1}^{\tau_i-1}x_{s+1}x_s^\top)Y_{\tau_i}^{-1}-(1-\gamma_i)(\sum_{s=1}^{\tau_i-1}x_{s+1}u_s^\top)V_{\tau_i}^{-1}(\sum_{s=1}^{\tau_i-1}u_s x_s^\top)Y_{\tau_i}^{-1}+\gamma_i A_*+E_i.\label{eq:103}
\end{align}
From Line 9 of Algorithm~\ref{alg2}, we have
\begin{align}
B_{\tau_i}=&\argmin_{B}\sum_{s=1}^{\tau_i-1}\|x_{s+1}-A_{\tau_i}x_{s}-Bu_s\|^2+\lambda\|B\|_F^2\nonumber\\
=&\big(\sum_{t=1}^{\tau_i-1}(x_{s+1}-A_{\tau_i}x_{s})u_s^\top\big)(\sum_{s=1}^{\tau_i-1}u_s u_s^\top+\lambda I)^{-1}\nonumber\\
=&(\sum_{t=1}^{\tau_i-1}x_{s+1}u_s^\top)(\sum_{s=1}^{\tau_i-1}u_s u_s^\top+\lambda I)^{-1}-A_{\tau_i}(\sum_{s=1}^{\tau_i-1}x_s u_s^\top)(\sum_{s=1}^{\tau_i-1} u_s u_s^\top+ \lambda I)^{-1}\nonumber\\
=&(\sum_{t=1}^{\tau_i-1}x_{s+1}u_s^\top)V_{\tau_i}^{-1}-(1-\gamma_i)(\sum_{s=1}^{\tau_i-1}x_{s+1}x_s^\top)Y_{\tau_i}^{-1}(\sum_{s=1}^{\tau_i-1}x_s u_s^\top)V_{\tau_i}^{-1}\nonumber\\
&+(1-\gamma_i)(\sum_{s=1}^{\tau_i-1}x_{s+1}u_s^\top)V_{\tau_i}^{-1}(\sum_{s=1}^{\tau_i-1}u_s x_s^\top)Y_{\tau_i}^{-1}(\sum_{s=1}^{\tau_i-1}x_s u_s^\top)V_{\tau_i}^{-1}-(\gamma_i A_*+E_i)(\sum_{s=1}^{\tau_i-1}x_s u_s^\top)V_{\tau_i}^{-1},\label{eq:104}
\end{align}
where we have used \eqref{eq:103} in the last equality. Now combining \eqref{eq:103} and \eqref{eq:104}, we obtain
\begin{align*}
(A_{\tau_i} \ B_{\tau_i})=(\sum_{s=1}^{\tau_i-1}x_{s+1}x_s^\top  \  \sum_{s=1}^{\tau_i-1}x_{s+1}u_s^\top)Z_{\tau_i}+(\frac{\gamma_i A_*+E_i}{1-\gamma_i}Y_{\tau_i} \ 0)Z_{\tau_i},
\end{align*}
where
\begin{align*}
Z_{\tau_i}=&\begin{pmatrix} (1-\gamma_i)Y_{\tau_i}^{-1} & - (1-\gamma_i)Y_{\tau_i}^{-1}(\sum_{s=1}^{\tau_i-1}x_s u_s^\top) V_{\tau_i}^{-1} \\[10pt]
 - (1-\gamma_i)V_{\tau_i}^{-1}(\sum_{s=1}^{\tau_i-1}u_s x_s^\top) Y_{\tau_i}^{-1} & V_{\tau_i}^{-1} + (1-\gamma_i)V_{\tau_i}^{-1}(\sum_{s=1}^{\tau_i-1}u_s x_s^\top) Y_{\tau_i}^{-1}(\sum_{s=1}^{\tau_i-1}x_s u_s^\top) V_{\tau_i}^{-1}
\end{pmatrix}\\[10pt]
=&\begin{pmatrix} \sum_{s=1}^{\tau_i-1}x_s x_s^\top + \lambda I + \frac{\gamma_i}{1-\gamma_i}Y_{\tau_i} & \sum_{s=1}^{\tau_i-1}x_s u_s^\top \\[10pt]
 \sum_{s=1}^{\tau_i-1}u_s x_s^\top & \sum_{s=1}^{\tau_i-1}u_s u_s^\top + \lambda I
\end{pmatrix}^{-1}.
\end{align*}
Now using~\eqref{eq102}, we have
\begin{align*}
(A_{\tau_i} \ B_{\tau_i})=&(\sum_{s=1}^{\tau_i-1}(A_*x_{s}+B_*u_s+w_s)x_s^\top  \  \sum_{s=1}^{\tau_i-1}(A_*x_{s}+B_*u_s+w_s)u_s^\top)Z_{\tau_i}+(\frac{\gamma_i A_*+E_i}{1-\gamma_i}Y_{\tau_i} \ 0)Z_{\tau_i}\\
=&(A_* \ B_*)\begin{pmatrix} \sum_{s=1}^{\tau_i-1} x_s x_s^\top + \frac{\gamma_i}{1-\gamma_i}Y_{\tau_i} & \sum_{s=1}^{\tau_i-1}x_s u_s^\top\\ \sum_{s=1}^{\tau_i-1}u_s x_s^\top & \sum_{s=1}^{\tau_i-1}u_s u_s^\top\end{pmatrix} Z_{\tau_i}+\big(\sum_{s=1}^{\tau_i-1}w_s(x_s^\top \ u_s^\top)\big)Z_{\tau_i}\\
&+(\frac{E_i}{1-\gamma_i}Y_{\tau_i} \ 0)Z_{\tau_i}\\
=&(A_* \ B_*)-\lambda(A_* \ B_*)Z_{\tau_i}+\big(\sum_{s=1}^{\tau_i-1}w_s(x_s^\top \ u_s^\top)\big)Z_{\tau_i}+(\frac{E_i}{1-\gamma_i}Y_{\tau_i} \ 0)Z_{\tau_i}.
\end{align*}
\end{proof}

As an immediate result, the following lemma can be proved similar to Lemma~\ref{lem107}.
\begin{lemma}\label{lem:502}
Let $\{x_t, u_t\}_{t=1}^T$ be the sequence of states and actions, and let $(A_{\tau_i} , B_{\tau_i})$ be the corresponding pair generated by Algorithm~\ref{alg2}. Then we have, with probability $1-\delta$,
\begin{align*}
\Tr(\Delta_{\tau_i}Z_{\tau_i}\Delta_{\tau_i}^\top)\leq 6 n \sigma^2 \log\Big(\frac{n}{\delta}\frac{\det(W_{\tau_i})}{\det(\lambda I)}\Big)+3\lambda\|(A_\star \ B_\star)\|_F^2+\frac{3}{1-\gamma_i}\Tr(E_i Y_{\tau_i}E_i^\top),
\end{align*}
where 
\begin{equation}
\Delta_{\tau_i}=(A_{\tau_i}-A_* \ \  B_{\tau_i}-B_*).
\end{equation}
\end{lemma}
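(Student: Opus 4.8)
The plan is to follow the proof of Lemma~\ref{lem107} essentially verbatim, replacing the representation of Lemma~\ref{Prop1} by \eqref{eq:843} and keeping careful track of the fact that the roles of the $x$- and $u$-blocks are swapped relative to Algorithm~\ref{alg}. Write $U_{\tau_i}=\sum_{s=1}^{\tau_i-1}w_s(x_s^\top \ u_s^\top)$, and let $M_{\tau_i}=Z_{\tau_i}^{-1}$ denote the uninverted matrix occurring in \eqref{eq:843}, namely
\[
M_{\tau_i}=\begin{pmatrix} \sum_{s=1}^{\tau_i-1}x_s x_s^\top + \lambda I + \frac{\gamma_i}{1-\gamma_i}Y_{\tau_i} & \sum_{s=1}^{\tau_i-1}x_s u_s^\top \\ \sum_{s=1}^{\tau_i-1}u_s x_s^\top & \sum_{s=1}^{\tau_i-1}u_s u_s^\top + \lambda I \end{pmatrix},
\]
so that the quantity to be bounded is $\Tr(\Delta_{\tau_i}M_{\tau_i}\Delta_{\tau_i}^\top)$. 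Right-multiplying \eqref{eq:843} by $M_{\tau_i}$ collapses the three copies of $Z_{\tau_i}$ and yields $\Delta_{\tau_i}M_{\tau_i}=-\lambda(A_*\ B_*)+U_{\tau_i}+(\tfrac{1}{1-\gamma_i}E_iY_{\tau_i}\ 0)$. Using $\Delta_{\tau_i}M_{\tau_i}\Delta_{\tau_i}^\top=(\Delta_{\tau_i}M_{\tau_i})M_{\tau_i}^{-1}(\Delta_{\tau_i}M_{\tau_i})^\top$ together with the elementary inequality $(P+Q+S)N(P+Q+S)^\top\preceq 3(PNP^\top+QNQ^\top+SNS^\top)$ valid for $N\succeq 0$, I would split the bound into three traces, exactly as in \eqref{eq202}.

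The middle and noise pieces are then handled identically to Lemma~\ref{lem107}. Since $M_{\tau_i}=W_{\tau_i}+\mathrm{diag}(\tfrac{\gamma_i}{1-\gamma_i}Y_{\tau_i},0)$ with $\tfrac{\gamma_i}{1-\gamma_i}Y_{\tau_i}\succeq 0$ and $W_{\tau_i}\succeq\lambda I$, we get both $M_{\tau_i}\succeq\lambda I$ and $M_{\tau_i}\succeq W_{\tau_i}$. The first gives $\lambda^2\Tr((A_*\ B_*)M_{\tau_i}^{-1}(A_*\ B_*)^\top)\le\lambda\|(A_*\ B_*)\|_F^2$; the second gives $M_{\tau_i}^{-1}\preceq W_{\tau_i}^{-1}$, hence $\Tr(U_{\tau_i}M_{\tau_i}^{-1}U_{\tau_i}^\top)\le\Tr(U_{\tau_i}W_{\tau_i}^{-1}U_{\tau_i}^\top)$. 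Applying Lemma~\ref{thm1} row-by-row with $g=\sigma$ to the $n$ scalar martingales $U_{\tau_i}(j)=\sum_s w_s(j)(x_s^\top\ u_s^\top)$ and taking a union bound over $j=1,\dots,n$ bounds this last trace by $2n\sigma^2\log(\tfrac{n}{\delta}\tfrac{\det W_{\tau_i}}{\det\lambda I})$, which after the factor $3$ produces the leading $6n\sigma^2\log(\cdots)$ term and the $3\lambda\|(A_*\ B_*)\|_F^2$ term.

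The only genuinely new computation is the $E_i$ piece, which requires identifying the correct block of $Z_{\tau_i}=M_{\tau_i}^{-1}$. Because here $Y_{\tau_i}$ is the Schur complement of the $u$-block and it is the $x$-block that carries the extra $\tfrac{\gamma_i}{1-\gamma_i}Y_{\tau_i}$, the Schur complement of $M_{\tau_i}$ with respect to its $u$-block equals $\widehat{Y}_{\tau_i}=(1+\tfrac{\gamma_i}{1-\gamma_i})Y_{\tau_i}=\tfrac{1}{1-\gamma_i}Y_{\tau_i}$, the swapped analogue of \eqref{eq:yhat}. Hence the top-left block of $Z_{\tau_i}$ is $\widehat{Y}_{\tau_i}^{-1}=(1-\gamma_i)Y_{\tau_i}^{-1}$, and since the row block $(\tfrac{1}{1-\gamma_i}E_iY_{\tau_i}\ 0)$ reads off only that block,
\[
\Tr\Big(\big(\tfrac{1}{1-\gamma_i}E_iY_{\tau_i}\ 0\big)Z_{\tau_i}\big(\tfrac{1}{1-\gamma_i}E_iY_{\tau_i}\ 0\big)^\top\Big)=\tfrac{1}{(1-\gamma_i)^2}\Tr\big(E_iY_{\tau_i}\widehat{Y}_{\tau_i}^{-1}Y_{\tau_i}E_i^\top\big)=\tfrac{1}{1-\gamma_i}\Tr(E_iY_{\tau_i}E_i^\top),
\]
and the factor $3$ yields the final term. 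Combining the three bounds gives the claim. The main obstacle is bookkeeping rather than a new idea: one must verify that the block-swap does not disturb the two comparisons $M_{\tau_i}\succeq W_{\tau_i}$ and $M_{\tau_i}\succeq\lambda I$, and that the identity $\widehat{Y}_{\tau_i}=\tfrac{1}{1-\gamma_i}Y_{\tau_i}$ still isolates the $E_i$ contribution to the correct corner of $Z_{\tau_i}$; once these are checked, the estimate is structurally identical to Lemma~\ref{lem107}.
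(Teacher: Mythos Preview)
Your proposal is correct and follows essentially the same approach as the paper, which simply states that the lemma ``can be proved similar to Lemma~\ref{lem107}'' without giving details. You have in fact carried out the block-swapped argument more explicitly than the paper does, correctly identifying that the top-left block of $M_{\tau_i}^{-1}$ equals $(1-\gamma_i)Y_{\tau_i}^{-1}$ via the Schur-complement identity, and correctly noting that the quantity being bounded is $\Tr(\Delta_{\tau_i}M_{\tau_i}\Delta_{\tau_i}^\top)$ with $M_{\tau_i}=Z_{\tau_i}^{-1}$ (consistent with how the result is used downstream in Lemma~\ref{lem:505}).
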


Another key lemma in proving our result is presented next.
\begin{lemma}
Let $K\in\real^{m\times n}$ be such that $KK^\top\succeq \mu I_n$ and let $p>0$. Then 
\begin{equation}
\begin{pmatrix}
I_n+ pI_n \,& K^\top\\
K \,& KK^\top
\end{pmatrix}\succeq \frac{\mu p}{\mu+p+1}I_{n+m}.
\end{equation}
\end{lemma}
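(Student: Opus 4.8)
The plan is to mirror the Schur-complement argument behind Lemma~\ref{lem2}, this time pivoting on the state block. Set $c=\frac{\mu p}{\mu+p+1}$ and
\[
M=\begin{pmatrix} (1+p)I_n & K^\top\\ K & KK^\top\end{pmatrix},
\]
so that the assertion $M\succeq cI_{n+m}$ is equivalent to $M-cI_{n+m}\succeq 0$, i.e.\ to positive semidefiniteness of
\[
\begin{pmatrix} (1+p-c)I_n & K^\top\\ K & KK^\top-cI_m\end{pmatrix}.
\]
First I would record that the top-left block is positive definite: since $\mu,p>0$ we have $c=\frac{\mu p}{\mu+p+1}<p$, hence $1+p-c>1>0$ and $(1+p-c)I_n$ is invertible.

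With this pivot, I would apply the standard block criterion: for $A\succ 0$, the matrix $\begin{pmatrix} A & B\\ B^\top & C\end{pmatrix}$ is positive semidefinite if and only if the Schur complement $C-B^\top A^{-1}B\succeq 0$. Taking $A=(1+p-c)I_n$, $B=K^\top$ and $C=KK^\top-cI_m$, and using $K\big((1+p-c)I_n\big)^{-1}K^\top=(1+p-c)^{-1}KK^\top$, the Schur complement is
\[
S=\big(KK^\top-cI_m\big)-(1+p-c)^{-1}KK^\top=\frac{p-c}{1+p-c}\,KK^\top-cI_m.
\]
Because $c<p$ gives $\frac{p-c}{1+p-c}>0$, the hypothesis $KK^\top\succeq \mu I_m$ propagates monotonically to $\frac{p-c}{1+p-c}KK^\top\succeq \frac{p-c}{1+p-c}\mu I_m$, so $S\succeq\big(\frac{p-c}{1+p-c}\mu-c\big)I_m$ and it suffices to establish the scalar inequality $\frac{p-c}{1+p-c}\mu\ge c$.

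Finally I would verify that scalar inequality by substituting the value of $c$. A direct computation gives $p-c=\frac{p(p+1)}{\mu+p+1}$ and $1+p-c=\frac{\mu+(p+1)^2}{\mu+p+1}$, whence $\frac{p-c}{1+p-c}\mu=\frac{\mu p(p+1)}{\mu+(p+1)^2}$; comparing this with $c=\frac{\mu p}{\mu+p+1}$ and clearing the (positive) denominators reduces the claim to $(p+1)(\mu+p+1)\ge \mu+(p+1)^2$, that is $(p+1)\mu\ge \mu$, which holds since $p>0$. This completes the argument.

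I do not expect a genuine obstacle here. The only delicate points are the positivity checks ($1+p-c>0$ and $\frac{p-c}{1+p-c}>0$) that license both the Schur-complement reduction and the monotone use of $KK^\top\succeq \mu I_m$; note that nontriviality of the bound relies on $KK^\top\succ 0$, which in the $A_*$-hint setting is exactly what the standing assumption $K_*K_*^\top\succ 0$ (together with the algorithm's $K_{\tau_i}K_{\tau_i}^\top\succeq \mu_i I$ warm-up test) is designed to guarantee. The genuine content is the collapse to a one-dimensional inequality, after which everything is routine algebra.
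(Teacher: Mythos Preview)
Your proof is correct and follows essentially the same route as the paper: subtract $cI_{n+m}$ with $c=\frac{\mu p}{\mu+p+1}$, take the Schur complement with respect to the (positive-definite) top-left block, and use $KK^\top\succeq \mu I_m$ to reduce to a scalar inequality. Your algebra is in fact a little tidier than the paper's, since you simplify the Schur complement to $\frac{p-c}{1+p-c}KK^\top-cI_m$ before invoking the hypothesis, whereas the paper carries the constants through in unsimplified form; but the underlying argument is identical.
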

\begin{proof}
Since $(1+p-\frac{\mu p}{1+\mu+p})I \succ 0$, the matrix 
\begin{align*}
\begin{pmatrix}
I_n+ pI_n - \frac{\mu p}{\mu+p+1}I_n \,& K^\top\\
K \,& KK^\top-\frac{\mu p}{\mu+p+1}I_m
\end{pmatrix}
\end{align*}
is positive semi-definite, if and only if its Schur complement is positive semi-definite, i.e.,
\begin{align}
KK^\top-\frac{\mu p}{\mu+p+1}I_m - K \frac{1}{1+p-\frac{\mu p}{\mu+p+1}} I_n K^\top \succeq 0\label{eq:523}.
\end{align}
Using the assumption $KK^\top \succeq \mu I_m$, we have
\begin{align*}
KK^\top \succeq &(\mu-\frac{\mu^2 p^2}{p(1+p)(\mu+ p+1)})I_m \\
= &\frac{\mu p}{\mu +p+1}\frac{(1+p)(1+p+\mu)-\mu p}{p(1+p)}I_m.
\end{align*}
Hence, we have
\begin{align*}
(\frac{p(1+p)}{(1+p)(1+p+\mu)-\mu p})KK^\top-\frac{\mu p}{\mu+p+1}I_m \succeq 0,
\end{align*}
and 
\begin{align*}
(1-\frac{\mu+p+1}{(1+p)(1+p+\mu)-\mu p})KK^\top-\frac{\mu p}{\mu+p+1}I_m \succeq 0,
\end{align*}
which proves \eqref{eq:523}.
\end{proof}

\begin{lemma}\label{lem:510}
Let $\{x_t,u_t\}_{t=1}^{\tau_{n_s}}$ be the sequence of states and actions of the system, and let $\tau_{1}\geq 200n \log\frac{12}{\delta}$. Then for $1\leq i\leq n_s$ we have, with probability $1-\delta$,
\begin{equation}
W_{\tau_{i}}\succeq \frac{\tau_i \sigma^2}{40(2+k^2)}I_{n+m}.
\end{equation} 
\end{lemma}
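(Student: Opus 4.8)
The plan is to reduce this statement to the single-period bound already established in Lemma~\ref{lem1} and then repeat that argument at each of the times $\tau_1,\dots,\tau_{n_s}$. The crucial structural observation is that, by the definition of $n_s$ in Algorithm~\ref{alg2}, throughout the entire extra warm-up phase $1\le t<\tau_{n_s}$ the controller applies the perturbed stabilizing policy $u_t=K_0 x_t+\eta_t$ with $\eta_t\sim\mathcal{N}(0,\sigma^2 I_m)$; no data-dependent feedback gain $K_{\tau_i}$ is deployed before round $n_s$. Consequently the data $\{x_s,u_s\}_{s=1}^{\tau_i-1}$ entering every $W_{\tau_i}$ with $i\le n_s$ is generated by exactly the policy analyzed in Lemma~\ref{lem1}.

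First I would set $z_t=(x_t^\top\ u_t^\top)^\top$ and note that, since $x_t$ is an affine function of the past Gaussian noises and $\eta_t$ is independent Gaussian, $z_t$ is conditionally Gaussian given $\mathcal{F}_{t-1}$. Repeating the conditional-covariance computation of Lemma~\ref{lem1} verbatim gives
\begin{equation*}
\mathbb{E}[z_t z_t^\top\mid\mathcal{F}_{t-1}]\succeq\sigma^2\begin{pmatrix} I_n & K_0^\top\\ K_0 & K_0 K_0^\top+I_m\end{pmatrix},
\end{equation*}
where I use $\mathbb{E}[x_s x_s^\top\mid\mathcal{F}_{t-1}]\succeq\sigma^2 I_n$ and $\mathbb{E}[\eta_t\eta_t^\top\mid\mathcal{F}_{t-1}]=\sigma^2 I_m$. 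Since $K_0$ is $(k,\ell)$-strongly stable by Lemma~\ref{Lem41}, we have $\|K_0\|\le k$, so applying Lemma~\ref{lem2} with $p=1$ yields the uniform lower bound $\mathbb{E}[z_t z_t^\top\mid\mathcal{F}_{t-1}]\succeq\frac{\sigma^2}{2+k^2}I_{n+m}$ at every warm-up step $t$.

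Next, for each fixed $i\in\{1,\dots,n_s\}$ I would invoke Lemma~\ref{thm20} (applied with its ``$\sigma^2$'' replaced by $\frac{\sigma^2}{2+k^2}$): because $\tau_i-1\ge\tau_1-1\ge 200 n\log\frac{12}{\delta}$, the conclusion $\sum_{s=1}^{\tau_i-1}z_s z_s^\top\succeq\frac{(\tau_i-1)\sigma^2}{40(2+k^2)}I_{n+m}$ holds with probability at least $1-\delta$, and adding $\lambda I_{n+m}$ (using $\lambda\ge\frac{\sigma^2}{40(2+k^2)}$ to absorb the $-1$) gives $W_{\tau_i}\succeq\frac{\tau_i\sigma^2}{40(2+k^2)}I_{n+m}$, exactly as in Lemma~\ref{lem1}. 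If the bound is wanted simultaneously over all $i\le n_s$, one applies a union bound over the at most $n_T\le T$ rounds, precisely as done for $\mathcal{E}_x$ in the proof of Lemma~\ref{lem:245}.

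The only nontrivial point — and the main thing to verify carefully — is the structural claim that the Gaussian perturbation $\eta_t$ is injected during the whole of $[1,\tau_{n_s})$, so that the persistency-of-excitation lower bound $\frac{\sigma^2}{2+k^2}I_{n+m}$ on the conditional covariance is valid at every time step. Once this is pinned down from the break/exit logic defining $n_s$ (namely that $n_s$ is the first round in which $K_{\tau_i}K_{\tau_i}^\top\succeq \tfrac{3}{2}\mu_i I$, and that the controller keeps playing $u_t=K_0 x_t+\eta_t$ until then), the estimate is an immediate repetition of Lemma~\ref{lem1} at each scale $\tau_i$, and no new analytic ingredient is required.
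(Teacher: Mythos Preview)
Your proposal is correct and matches the paper's approach exactly: the paper's own proof is the single line ``The proof is similar to Lemma~\ref{lem1}.'' You have correctly identified the key structural point --- that during the entire extended warm-up $1\le t<\tau_{n_s}$ the policy is $u_t=K_0x_t+\eta_t$, so the conditional-covariance lower bound and the application of Lemma~\ref{thm20} go through verbatim at every scale $\tau_i$ --- and your handling of the union bound for simultaneity is consistent with how the paper later aggregates these events in Lemma~\ref{lem:245}.
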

The proof is similar to Lemma~\ref{lem1}.

Next, we define the events on which the growth of regret is small. 
\begin{align*}
\mathcal{E}_x=\,&\Big\{\sum_{t=\tau_i}^{\tau_{i+1}-1}x_s x_s^\top\succeq \frac{(\tau_{i+1}-\tau_i)\sigma^2}{40}I_n, \ \mathrm{for} \ 1\leq i\leq n_T\Big\},\\
\mathcal{E}_W=\,&\Big\{ W_{\tau_i}\succeq \frac{\tau_i \sigma^2}{40(2+k^2)}I_{n+m} \ \mathrm{for} \ 1\leq i\leq n_s \Big\},\\
\mathcal{E}_w =\,& \Big\{\max_{1\leq t \leq T}\|w_t\|\leq \sigma \sqrt{15n \log 4T}\Big\},\\
\mathcal{E}_\eta =\,& \Big\{\max_{1\leq t \leq T}\|\eta_t\|\leq \sigma \sqrt{15n \log 4T}\Big\},\\
\mathcal{E}_\Delta=\,&\Big\{\Tr(\Delta_{\tau_i} \widehat{W}_{\tau_i} \Delta_{\tau_i}^\top)\leq 6 n \sigma^2 \log\Big(4T^3\frac{\det(W_{\tau_i})}{\det(\lambda I)}\Big)+3\lambda\|(A_\star,B_\star)\|_F^2\nonumber\\ &\qquad \qquad \qquad \qquad +\frac{3}{1-\gamma_i}\Tr(E_i Y_{\tau_i}E_i^\top), \quad \ \mathrm{for} \ i=1,\ldots, n_T\Big\}.
\end{align*}

\begin{lemma}
Let $\mathcal{E}=\mathcal{E}_x\cap \mathcal{E}_W \cap \mathcal{E}_w \cap \mathcal{E}_\eta \cap \mathcal{E}_\Delta$, and $\tau_1 \geq 600 n \log 48 T$. Then $\mathbb{P}(\mathcal{E})\geq 1-T^{-2}$.
\end{lemma}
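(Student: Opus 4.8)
The plan is to control the failure probability of each of the five events $\mathcal{E}_x,\mathcal{E}_W,\mathcal{E}_w,\mathcal{E}_\eta,\mathcal{E}_\Delta$ and combine them by a union bound, following the proof of Lemma~\ref{lem:245} for the $B_*$-hint setting. The single structural difference is that here $\mathcal{E}_W$ must hold at every warm-up stage $1\le i\le n_s$ rather than only at $i=1$, so the excitation lower bound has to be invoked once per stage.

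First I would handle $\mathcal{E}_x\cap\mathcal{E}_W$ together. Throughout the warm-up stages of Algorithm~\ref{alg2} the controller plays the perturbed policy $u_t=K_0 x_t+\eta_t$, so the conditional second moment satisfies $\mathbb{E}[z_t z_t^\top\mid\mathcal{F}_{t-1}]\succeq\tfrac{\sigma^2}{2+k^2}I_{n+m}$ exactly as in the derivation of Lemma~\ref{lem1}; hence Lemma~\ref{lem:510} applies at each stage $i\le n_s$. I would also apply Lemma~\ref{thm20} once per block $[\tau_i,\tau_{i+1})$, using $\tau_{i+1}-\tau_i\ge\tau_1\ge 200n\log\tfrac{12}{\delta}$ and $\mathbb{E}[x_s x_s^\top]\succeq\sigma^2 I_n$. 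Assigning each of these $O(T)$ applications a failure probability of order $T^{-3}$ and union-bounding (using $n_s\le n_T\le T$) gives $\mathbb{P}(\mathcal{E}_x\cap\mathcal{E}_W)\ge 1-\tfrac{1}{4}T^{-2}$ whenever $\tau_1\ge 600n\log 48T$.

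The events $\mathcal{E}_w$ and $\mathcal{E}_\eta$ are standard Gaussian norm concentration: by the sub-Gaussian maximal inequality (Lemma~\ref{lem13} in Appendix A) with $\delta=\tfrac{1}{4}T^{-2}$, each of $\max_{1\le t\le T}\|w_t\|$ and $\max_{1\le t\le T}\|\eta_t\|$ is at most $\sigma\sqrt{15n\log 4T}$ with probability $1-\tfrac{1}{4}T^{-2}$. Finally, $\mathcal{E}_\Delta$ is precisely the high-probability conclusion of Lemma~\ref{lem:502} (the $A_*$-hint analogue of Lemma~\ref{lem107}), applied with a failure probability of order $T^{-2}$ and union-bounded over $i=1,\dots,n_T$; the logarithmic argument $4T^3\det(W_{\tau_i})/\det(\lambda I)$ in the event definition records this choice of $\delta$. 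Summing the four quarter-contributions from $\mathcal{E}_x\cap\mathcal{E}_W$, $\mathcal{E}_w$, $\mathcal{E}_\eta$, and $\mathcal{E}_\Delta$ yields $\mathbb{P}(\mathcal{E})\ge 1-T^{-2}$.

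The main point requiring care is the uniform-in-$i$ validity of the excitation bound behind $\mathcal{E}_W$: one must confirm that at every stage $i\le n_s$, i.e.\ before the break condition $K_{\tau_i}K_{\tau_i}^\top\succeq\tfrac{3}{2}\mu_i I$ triggers, the controller is still using the perturbed stabilizing policy $u_t=K_0 x_t+\eta_t$, so that the conditional covariance remains bounded below and Lemma~\ref{lem:510} may be applied at each such stage. Once this is checked, the remaining estimates are routine transcriptions of the $B_*$-hint argument.
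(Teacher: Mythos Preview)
Your proposal is correct and follows the same approach as the paper, which simply states that the proof is similar to that of Lemma~\ref{lem:245}. You have in fact spelled out more detail than the paper does, correctly identifying that the only structural change from the $B_*$-hint case is that $\mathcal{E}_W$ must now hold at every warm-up stage $1\le i\le n_s$, and that this is handled by invoking Lemma~\ref{lem:510} (the $A_*$-hint analogue of Lemma~\ref{lem1}) at each such stage, all of which remain in the perturbed-policy regime $u_t=K_0 x_t+\eta_t$; since $n_s+n_T\le T$, the union bound with per-application failure $\tfrac{1}{4}T^{-3}$ still yields the required $\tfrac{1}{4}T^{-2}$ budget for $\mathcal{E}_x\cap\mathcal{E}_W$.
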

\begin{proof}
The proof is similar to the proof of Lemma~\ref{lem:245}.
\end{proof}
\begin{lemma}\label{lem:505}
On the event $\mathcal{E}=\mathcal{E}_x\cap \mathcal{E}_W \cap \mathcal{E}_w \cap \mathcal{E}_\eta \cap \mathcal{E}_\Delta$, we have that
\begin{enumerate}
\item $\|x_t\|\leq k^3\sigma(1+\phi)\sqrt{60n \log 4T}$ for $1\leq t \leq \tau_{n_s}-1$;
\item $\|u_t\|\leq k^4(2+\phi)\sqrt{60n \log 4T}$ for $1\leq t \leq \tau_{n_s}-1$;
\item $\|\Delta_{\tau_i}\|\leq \epsilon_0 2^{-i+1}$ for $1\leq i\leq n_s$.
\end{enumerate}
\end{lemma}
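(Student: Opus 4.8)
The plan is to handle the three claims separately, using throughout that on $1\le t\le\tau_{n_s}-1$ the controller plays the perturbed warm-up policy $u_t=K_0x_t+\eta_t$: the extra warm-up loop of Algorithm~\ref{alg2} keeps injecting the exploration noise $\eta_t$ until the test $K_{\tau_i}K_{\tau_i}^\top\succeq\tfrac{3}{2}\mu_iI$ first succeeds, which by definition happens at index $i=n_s$. For items~1 and~2 I would re-run the proof of Lemma~\ref{lem201} essentially verbatim, only replacing the range $\tau_1-1$ by $\tau_{n_s}-1$. Since the same controller is used on the whole interval, the closed loop is the fixed linear system $x_{t+1}=(A_*+B_*K_0)x_t+\tilde w_t$ with $\tilde w_t=w_t+B_*\eta_t$; by $J(K_0)\le\nu$ and Lemma~\ref{Lem41}, $K_0$ is $(k,\ell)$-strongly stable with $\ell=1/(2k^2)$, so Lemma~\ref{lem104} (started at $x_1=0$) gives the length-independent bound $\|x_t\|\le2k^3\max_{1\le s\le T}\|\tilde w_s\|$. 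Bounding $\max_s\|\tilde w_s\|\le\sigma(1+\phi)\sqrt{15n\log 4T}$ on $\mathcal{E}_w\cap\mathcal{E}_\eta$ yields item~1, and $\|u_t\|\le\|K_0\|\|x_t\|+\|\eta_t\|$ yields item~2.

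For item~3 the key input is a lower bound on the Gram matrix $\widehat W_{\tau_i}:=Z_{\tau_i}^{-1}$ that appears in the event $\mathcal{E}_\Delta$. Because the perturbation $\eta_t$ is active throughout the extra warm-up, persistent excitation holds on all of $\{1,\dots,\tau_{n_s}-1\}$, so Lemma~\ref{lem:510} applies and gives $W_{\tau_i}\succeq\frac{\tau_i\sigma^2}{40(2+k^2)}I_{n+m}$ for every $1\le i\le n_s$ on $\mathcal{E}_W$. Since $\widehat W_{\tau_i}$ differs from $W_{\tau_i}$ only by the positive semidefinite term $\frac{\gamma_i}{1-\gamma_i}Y_{\tau_i}$ added to its $(x,x)$-block (with $Y_{\tau_i}\succ0$ the Schur complement of the $(u,u)$-block of $W_{\tau_i}$ and $\gamma_i\in(0,1)$), we get $\widehat W_{\tau_i}\succeq W_{\tau_i}\succeq\frac{\tau_i\sigma^2}{40(2+k^2)}I$, hence $\|\Delta_{\tau_i}\|^2\le\frac{40(2+k^2)}{\tau_i\sigma^2}\Tr\!\big(\Delta_{\tau_i}\widehat W_{\tau_i}\Delta_{\tau_i}^\top\big)$.

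To finish I would bound the trace using the estimate furnished by $\mathcal{E}_\Delta$ (Lemma~\ref{lem:502}). Items~1 and~2 give $\|z_s\|^2\le\lambda$, so Lemma~\ref{lem105} gives $\log\frac{\det W_{\tau_i}}{\det\lambda I}\le(n+m)\log T$, and the same bound gives $Y_{\tau_i}\preceq\tau_i\lambda I$, whence $\frac{3}{1-\gamma_i}\Tr(E_iY_{\tau_i}E_i^\top)\le 3\lambda$ from $\|E_i\|_F^2\le\frac{1-\gamma_i}{\tau_i}$. Collecting the three terms exactly as in the base case of Lemma~\ref{lem210} gives $\|\Delta_{\tau_i}\|^2\le\frac{(2+k^2)(n+m)}{\tau_i}\cdot\frac{240\lambda(1+\phi^2)}{\sigma^2}$, and the choice of $\tau_1$ — using only the $(2+k^2)$ branch of the maximum in its definition — makes this at most $\epsilon_0^2\tau_1/\tau_i=\epsilon_0^2\,4^{-(i-1)}$; since $\tau_i=\tau_1 4^{i-1}$, taking square roots gives $\|\Delta_{\tau_i}\|\le\epsilon_0 2^{-i+1}$.

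The step I expect to be delicate is the uniform lower bound $\widehat W_{\tau_i}\succeq c\,\tau_i I$ holding simultaneously for all $i\le n_s$. This is precisely where the argument departs from, and is easier than, the $B$-hint analysis of Lemma~\ref{lem210}: there the analogous bound required an induction coupling the excitation to the strong stability of the learned gains $K_{\tau_i}$, whereas here the never-switched-off perturbation $\eta_t$ supplies persistent excitation directly through Lemma~\ref{lem:510}, so no induction is needed and only items~1--2 (to ensure $\|z_s\|^2\le\lambda$) feed into the estimate. The one bookkeeping subtlety is to invoke the $(2+k^2)$ term of the $\max$ in $\tau_1$, the other term pertaining only to the post-warm-up phase analyzed later.
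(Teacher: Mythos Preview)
Your proposal is correct and follows essentially the same route as the paper: the paper also defers items~1 and~2 to the proof of Lemma~\ref{lem201} (since the perturbed warm-up controller $u_t=K_0x_t+\eta_t$ is used for all $1\le t\le\tau_{n_s}-1$), and for item~3 uses exactly your chain $\widehat W_{\tau_i}\succeq W_{\tau_i}\succeq\frac{\tau_i\sigma^2}{40(2+k^2)}I$ from Lemma~\ref{lem:510} together with the event $\mathcal{E}_\Delta$, the bound $\|z_s\|^2\le\lambda$, the log-det estimate of Lemma~\ref{lem105}, and $\frac{3}{1-\gamma_i}\Tr(E_iY_{\tau_i}E_i^\top)\le3\lambda$, finishing via the $(2+k^2)$ branch of the $\max$ in the definition of~$\tau_1$. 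Your observation that no induction is needed here---because the exploration noise stays on throughout the extended warm-up---is precisely the simplification the paper exploits.
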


\begin{proof}
The proof for part 1 and 2 are similar to Lemma~13. Here, we prove part 3. For $1\leq i\leq n_s$, we have
\begin{align*}\label{eq:301}
\|\Delta_{\tau_i}\|^2\leq \,& \Tr\Big(\Delta_{\tau_i}\Delta_{\tau_i}^\top\Big)\\
\leq \,&\frac{40(2+k^2)}{\tau_i \sigma^2}\Tr\Big(\Delta_{\tau_i}\widehat{W}_{\tau_i}\Delta_{\tau_i}^\top\Big)\\
\leq \,& \frac{40(2+k^2)}{\tau_i \sigma^2}\Big(6 n \sigma^2 \log\Big(4T^3\frac{\det(W_{\tau_i})}{\det(\lambda I)}\Big)+6\lambda\|(A_\star \ B_\star)\|_F^2\Big),
\end{align*}
where we have used $\widehat{W}_{\tau_i}\succeq W_{\tau_i} \succeq \frac{\tau_i \sigma^2}{40(2+k^2)}I_{n+m}$ from Lemma~\ref{lem:510} in the second inequality and Lemma~\ref{lem:502} and $\frac{3}{1-\gamma_i}\Tr(E_i Y_{\tau_i}E_i^\top) \leq 3 \lambda$ in the last inequality. Using this and $\log\frac{\det(W_{\tau_i})}{\det(\lambda I_{n+m})}\leq (n+m) \log T$, we can write
\begin{align*}
\|\Delta_{\tau_i}\|^2\leq \,&\frac{2+k^2}{\tau_i}\Big(240n(n+m)\log(4T^4)+\frac{240\lambda(n+m)\phi^2}{\sigma^2}\Big)\\
\leq \,&\frac{(2+k^2)(n+m)}{\tau_i}\Big(960n\log(4T)+\frac{240\lambda\phi^2}{\sigma^2}\Big)\\
\leq \,&\frac{(2+k^2)(n+m)}{\tau_i}\frac{240\lambda(1+\phi^2) }{\sigma^2}\leq \frac{\epsilon_0^2\tau_1}{\tau_i}\leq \epsilon_0^2 4^{-i+1},
\end{align*}
where we have used the definition of $\tau_1$ in the last inequality.
\end{proof}

\begin{lemma}\label{lem:513}
On the event $\mathcal{E}$, we have that $\max\{1, \log_2\frac{\mu_1}{\mu_*}\}\leq n_s \leq 2+ \max\{1, \log_2\frac{\mu_1}{\mu_*}\}$.
\end{lemma}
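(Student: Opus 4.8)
The plan is to treat $n_s$ as a stopping time read off from Algorithm~\ref{alg2}: by construction $n_s$ is the first index $i$ at which the certificate $K_{\tau_i}K_{\tau_i}^\top \succeq \frac{3}{2}\mu_i I$ is declared, i.e. the first $i$ with $\lambda_{\min}(K_{\tau_i}K_{\tau_i}^\top) \ge \frac{3}{2}\mu_i$. Writing $\mu_* = \lambda_{\min}(K_*K_*^\top)$, which is strictly positive by the standing assumption $K_*K_*^\top \succ 0$, the whole statement reduces to sandwiching $\lambda_{\min}(K_{\tau_i}K_{\tau_i}^\top)$ around $\mu_*$ with a geometrically shrinking error and comparing it against the geometrically shrinking threshold $\frac{3}{2}\mu_i$.

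First I would establish the perturbation estimate. On the event $\mathcal{E}$, Lemma~\ref{lem:505} (part 3) gives $\|\Delta_{\tau_i}\| \le \epsilon_0 2^{-i+1}$ for $1\le i\le n_s$, so Lemma~\ref{lemm} yields $\|K_{\tau_i}-K_*\| \le C_0\epsilon_0 2^{-i+1}$. Combining this with the uniform bound $\|K_{\tau_i}\|,\|K_*\|\le k$ valid on $\mathcal{E}$ and the identity $K_{\tau_i}K_{\tau_i}^\top - K_*K_*^\top = (K_{\tau_i}-K_*)K_{\tau_i}^\top + K_*(K_{\tau_i}-K_*)^\top$ gives $\|K_{\tau_i}K_{\tau_i}^\top - K_*K_*^\top\| \le 2kC_0\epsilon_0 2^{-i+1} = \mu_1 2^{-i} =: \rho_i$, using $\mu_1 = 4kC_0\epsilon_0$. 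Weyl's inequality then upgrades this to the two-sided eigenvalue bound $\mu_* - \rho_i \le \lambda_{\min}(K_{\tau_i}K_{\tau_i}^\top) \le \mu_* + \rho_i$.

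The remaining step is the crossing argument, which hinges on the alignment $\frac{3}{2}\mu_i = 3\rho_i$, i.e. the stopping threshold equals exactly three times the perturbation radius. For the lower bound: if $i < 1+\log_2(\mu_1/\mu_*)$ then $\mu_* < 2\rho_i$, so $\lambda_{\min}(K_{\tau_i}K_{\tau_i}^\top) \le \mu_*+\rho_i < 3\rho_i = \frac{3}{2}\mu_i$ and the certificate cannot fire; together with the trivial $n_s\ge 1$ this gives $n_s \ge \max\{1,\log_2(\mu_1/\mu_*)\}$. For the upper bound: once $i \ge 2+\log_2(\mu_1/\mu_*)$ we have $\mu_* \ge 4\rho_i$, so $\lambda_{\min}(K_{\tau_i}K_{\tau_i}^\top) \ge \mu_*-\rho_i \ge 3\rho_i = \frac{3}{2}\mu_i$ and the certificate is guaranteed to fire; since $n_s$ is the first such index, $n_s \le 2 + \max\{1,\log_2(\mu_1/\mu_*)\}$ after rounding $\log_2(\mu_1/\mu_*)$ up to an integer and accounting for the floor $n_s\ge 1$.

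The main obstacle I expect is making the constants line up cleanly, and in particular verifying the identity $\frac{3}{2}\mu_i = 3\rho_i$: one must check that the DARE-Lipschitz constant $C_0$, the per-round rate $2^{-i+1}$ from Lemma~\ref{lem:505}, and the definition $\mu_1 = 4kC_0\epsilon_0$ combine so that the perturbation radius is exactly $\mu_1 2^{-i}$ while the threshold schedule halves at the same rate, so that the threshold is exactly thrice that radius. It is this factor-of-three spacing against $\rho_i$ that produces the clean $+2$ gap between the two bounds. Two minor technical points also need care: the estimate from Lemma~\ref{lem:505} is only available for $i\le n_s$, so the crossing argument must be phrased as a stopping-time statement (the certified rounds are precisely those visited before breaking), and one must assume $T$ is large enough that $n_T \ge 2+\log_2(\mu_1/\mu_*)$ so the first loop actually reaches a certifying round. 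The integer rounding around $\log_2(\mu_1/\mu_*)$ and its interaction with the $\max\{1,\cdot\}$ is routine bookkeeping.
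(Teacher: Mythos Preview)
The paper does not supply its own proof for this lemma; it simply cites Lemma~23 of \cite{AC-AC-TK:2020}. Your proposal is a faithful reconstruction of that cited argument and is correct in outline: bound $\|K_{\tau_i}K_{\tau_i}^\top - K_*K_*^\top\|$ via the Lipschitz estimate of Lemma~\ref{lemm} combined with $\|\Delta_{\tau_i}\|\le\epsilon_0 2^{-i+1}$ from Lemma~\ref{lem:505}, then sandwich $\lambda_{\min}(K_{\tau_i}K_{\tau_i}^\top)$ around $\mu_*$ with radius $\rho_i=\mu_1 2^{-i}$ and compare against the threshold $\tfrac{3}{2}\mu_i$. The crucial identity $\tfrac{3}{2}\mu_i=3\rho_i$ that you rely on holds once the threshold schedule is taken as $\mu_i=\mu_1 2^{-(i-1)}$ (decreasing), which is what the crossing argument needs and what the referenced paper uses; note that Algorithm~\ref{alg2} as printed here writes $\mu_i=\mu_1 2^{i-1}$, which is almost certainly a sign typo in the exponent (an increasing threshold would make the certificate eventually impossible rather than eventually certain, contradicting the boundedness of $n_s$ asserted by the lemma). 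Your stopping-time phrasing for the upper bound---assume $n_s$ exceeds the target index, so Lemma~\ref{lem:505} applies at that index, yet the certificate fires there, contradiction---is exactly the right way to handle the restriction $i\le n_s$ in Lemma~\ref{lem:505}. Modulo the integer-rounding bookkeeping you already flag, the argument is sound and matches the approach of the cited reference.
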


\begin{proof}
The proof is given in \cite[Lemma~23]{AC-AC-TK:2020}.
\end{proof}

\begin{lemma}\label{lem:512}
On the event $\mathcal{E}$, we have that
\begin{enumerate}
\item $K_{\tau_i}$ is $(k,\ell)$-strongly stable, for all $1\leq i \leq n_T$;
\item $\|x_t\|^2\leq x_b$, for all $1\leq t \leq T$;
\item $\|u_t\|^2\leq \lambda$ for all $1\leq t \leq T$;
\item $K_{\tau_i}K_{\tau_i}^\top \succeq \frac{\mu_*}{2}I$ for $n_s\leq i \leq n_T$;
\item $\|\Delta_{\tau_i}\| \leq \epsilon_0 2^{-i+1}$, for all $n_s\leq i \leq n_T$;
\item $\widehat{W}_{\tau_i}\succeq \frac{\sigma^2\tau_{i}}{\max\{40(2+k^2),\frac{20}{\mu_*p}(\mu_*+2p+2)\}}I_{n+m}$ for all $1\leq i \leq n_T$,
\end{enumerate}
where 
\begin{equation}\label{x_b2}
x_b=135 n k^2 \sigma^2 \max\Big\{(1+\phi)^2k^6,4k^6\Big\}\log(4T),
\end{equation}
and $\lambda=(1+k)^2 x_b$.

\end{lemma}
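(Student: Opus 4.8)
The plan is to mirror the induction of Lemma~\ref{lem210}, while accounting for the two phases of Algorithm~\ref{alg2} and for the fact that, because the hint now acts on $A_*$, the extra positive term supplied by the hint lands in the upper-left (state) block of $\widehat{W}_{\tau_{i+1}}$ rather than in the control block. First I would dispose of the warm-up phase $1\le i\le n_s$, in which the controller plays $u_t=K_0x_t+\eta_t$: here parts 1--3 and 5 are immediate, since $K_0$ is $(k,\ell)$-strongly stable under $J(K_0)\le\nu$, the bounds of Lemma~\ref{lem:505} give $\|x_t\|^2\le x_b$ and $\|u_t\|^2\le\lambda$, and Lemma~\ref{lem:505} also yields $\|\Delta_{\tau_i}\|\le\epsilon_0 2^{-i+1}$. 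Part 4 is vacuous for $i<n_s$, and part 6 follows on this range from $W_{\tau_i}\succeq \tfrac{\tau_i\sigma^2}{40(2+k^2)}I$ (Lemma~\ref{lem:510}) together with $\widehat{W}_{\tau_i}\succeq W_{\tau_i}$, which accounts for the first term of the maximum in the denominator.

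Next I would calibrate the base of the main-phase induction at $i=n_s$. The break test in Algorithm~\ref{alg2} guarantees $K_{\tau_{n_s}}K_{\tau_{n_s}}^\top\succeq \tfrac{3}{2}\mu_{n_s}I$; combining this with the bound $n_s\le 2+\max\{1,\log_2(\mu_1/\mu_*)\}$ of Lemma~\ref{lem:513} and the schedule $\mu_i=\mu_1 2^{i-1}$ (so that $\mu_{n_s}$ is comparable to $\mu_*$) establishes part 4 at $i=n_s$, namely $K_{\tau_{n_s}}K_{\tau_{n_s}}^\top\succeq\tfrac{\mu_*}{2}I$. The estimate $\|\Delta_{\tau_{n_s}}\|\le\epsilon_0 2^{-n_s+1}$ is inherited from Lemma~\ref{lem:505}, and Lemma~\ref{lem42} then gives strong stability of $K_{\tau_{n_s}}$.

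For the inductive step $i\to i+1$ on $n_s\le i<n_T$, I would decompose $\widehat{W}_{\tau_{i+1}}=\widehat{W}_{\tau_i}+(\text{increment over }[\tau_i,\tau_{i+1}))$ exactly as in Lemma~\ref{lem210}. Since on this range $u_s=K_{\tau_i}x_s$ and the hint on $A_*$ contributes its excess to the state block, after lower-bounding $\sum_{s=\tau_i}^{\tau_{i+1}-1}x_sx_s^\top$ on $\mathcal{E}_x$ via Lemma~\ref{thm20} the increment takes the form $\tfrac{\sigma^2(\tau_{i+1}-\tau_i)}{40}\begin{pmatrix} I_n+pI_n & K_{\tau_i}^\top\\ K_{\tau_i} & K_{\tau_i}K_{\tau_i}^\top\end{pmatrix}$. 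Invoking the matrix inequality lemma preceding Lemma~\ref{lem:510} with $\mu=\tfrac{\mu_*}{2}$ (licensed by part 4 at step $i$) and the induction hypothesis for part 6 yields the part-6 bound at $i+1$, which is precisely the second term in the maximum. Feeding this lower bound into Lemma~\ref{lem:502}, bounding the hint-error contribution by $3\lambda$ as was done for $\Delta_{\tau_1}$ in Lemma~\ref{lem210}, and using $\log\tfrac{\det W_{\tau_{i+1}}}{\det(\lambda I)}\le(n+m)\log T$ gives part 5, $\|\Delta_{\tau_{i+1}}\|\le\epsilon_0 2^{-i}$. Lemma~\ref{lemm} then controls $\|K_{\tau_{i+1}}-K_*\|\le C_0\epsilon_0 2^{-i}$, whence the extra assumption $K_*K_*^\top\succ 0$ re-establishes part 4 at $i+1$ and Lemma~\ref{lem42} gives part 1; parts 2--3 follow from the strong-stability state bound of Lemma~\ref{lem39} together with $u_t=K_{\tau_i}x_t$, just as in Lemma~\ref{lem210}.

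The main obstacle is the circular dependence among parts 4, 5, and 6, which is absent in the $B_*$ case: part 6 needs $K_{\tau_i}K_{\tau_i}^\top\succeq\tfrac{\mu_*}{2}I$ (part 4) to apply the $\mu$-lemma, part 4 for $i>n_s$ needs $K_{\tau_i}$ close to $K_*$ and hence $\|\Delta_{\tau_i}\|$ small (part 5), and part 5 needs the part-6 lower bound on $\widehat{W}_{\tau_i}$. Breaking this loop requires threading the statements through the induction in the order $6\Rightarrow 5\Rightarrow 4\Rightarrow 1$ at each step, and, more delicately, verifying that the choices $\mu_1=4kC_0\epsilon_0$ and the factor $\tfrac32$ in the break test are calibrated so that termination with $K_{\tau_{n_s}}K_{\tau_{n_s}}^\top\succeq \tfrac32\mu_{n_s}I$ propagates to $K_{\tau_i}K_{\tau_i}^\top\succeq\tfrac{\mu_*}{2}I$ for all later $i$, using that the perturbation $\|K_{\tau_i}K_{\tau_i}^\top-K_*K_*^\top\|\le 2kC_0\epsilon_0 2^{-i+1}$ decays geometrically. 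This calibration, rather than any single matrix manipulation, is where the real work lies.
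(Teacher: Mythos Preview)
Your overall plan matches the paper's: mirror the induction of Lemma~\ref{lem210}, handle the warm-up phase $i\le n_s$ via Lemmas~\ref{lem:505} and~\ref{lem:510}, and for $i\ge n_s$ decompose $\widehat W_{\tau_{i+1}}$ with the hint excess now landing in the upper-left block, then apply the matrix inequality preceding Lemma~\ref{lem:510} with $\mu=\mu_*/2$. The circular dependence you flag among parts~4,~5,~6 is real, and your ordering $6\Rightarrow5\Rightarrow4\Rightarrow1$ is exactly what the paper follows.

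The one substantive deviation is your route to part~4 at $i=n_s$. You invoke the break test $K_{\tau_{n_s}}K_{\tau_{n_s}}^\top\succeq\tfrac32\mu_{n_s}I$ together with the \emph{upper} bound on $n_s$ from Lemma~\ref{lem:513} to argue that $\mu_{n_s}$ is comparable to $\mu_*$. This does not close: reducing $\tfrac32\mu_{n_s}\ge\tfrac{\mu_*}{2}$ to a condition on $n_s$ and $\mu_1$ leaves a gap when $\mu_1=4kC_0\epsilon_0$ is small relative to $\mu_*$ (and with the schedule $\mu_i=\mu_1 2^{i-1}$ as written, an upper bound on $n_s$ only upper-bounds $\mu_{n_s}$, which is the wrong direction). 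The paper bypasses the break test entirely here. It uses the \emph{lower} bound $n_s\ge\max\{1,\log_2(\mu_1/\mu_*)\}$ from Lemma~\ref{lem:513}, the estimate $\|\Delta_{\tau_{n_s}}\|\le\epsilon_0\,2^{-n_s+1}$ from Lemma~\ref{lem:505}, and the choice $\mu_1=4kC_0\epsilon_0$ to obtain $\|\Delta_{\tau_{n_s}}\|\le\min\{\epsilon_0,\mu_*/(4kC_0)\}$, and then applies Lemma~\ref{lem42}, part~3, directly. The same device carries part~4 through the induction for $i>n_s$: once $\|\Delta_{\tau_{i+1}}\|\le\epsilon_0\,2^{-i}$, Lemma~\ref{lem42}(3) gives $K_{\tau_{i+1}}K_{\tau_{i+1}}^\top\succeq\tfrac{\mu_*}{2}I$ immediately, without the detour through $\|K_{\tau_{i+1}}-K_*\|$ via Lemma~\ref{lemm} that you propose (your detour is correct, just less direct).

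A minor slip: in the warm-up phase, part~1 concerns $K_{\tau_i}$ (the DARE output on the current estimates), not $K_0$. Its strong stability comes from $\|\Delta_{\tau_i}\|\le\epsilon_0$ (Lemma~\ref{lem:505}) via Lemma~\ref{lem42}(1); the strong stability of $K_0$ is what drives parts~2--3.
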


\begin{proof}
The proof of Lemma~\ref{lem:512} is similar to the proof of Lemma~\ref{lem210}, and uses an induction. Parts 1, 2, 3, 5, and 6 for $1\leq i\leq\tau_{n_s}$ can be proved using similar arguments of Lemma~\ref{lem210}.  
%We prove this lemma by induction. Using Lemma~\ref{lem:505}, $\|\Delta_{\tau_{i}}\|\leq \epsilon_0$ for $1\leq i\leq n_s$, and hence by using Lemma~\ref{lem42} we conclude that $K_{\tau_i}$ is $(k,\ell)$-strongly stable, for all $1\leq i \leq n_s$. Note that $\|x_t\|^2\leq x_b$ and $\|u_t\|^2\leq \lambda$ for all $1\leq t \leq \tau_{n_s}$ by Lemma~\ref{lem:505}. For the event $\mathcal{E}_W$, we have $W_{\tau_i}\succeq \frac{\tau_i \sigma^2}{40(2+k^2)}I_{n+m}$ for $1\leq i\leq n_s$. Using this, we have
%\begin{align}\label{eq7012}
%\widehat{W}_{\tau_i}=W_{\tau_i}+\begin{pmatrix}
%\frac{\gamma_i}{1-\gamma_i} Y_{\tau_i} \,& 0 \\
%0 \,& 0
%\end{pmatrix}\succeq \frac{\tau_i \sigma^2}{40(2+k^2)}I_{n+m},
%\end{align}
%which proves the statement~6 of Lemma~\ref{lem:512} for $1\leq i \leq n_s$. 
For part 4, by Lemma~\ref{lem:505}, we have $\|\Delta_{\tau_i}\| \leq \epsilon_0 2^{-n_s+1}$ and by using Lemma~\ref{lem:513}, we have $\|\Delta_{\tau_{n_s}}\|\leq \min\big\{\epsilon_0,\frac{\mu_*}{4kC_0}\big\}$. Now we can use Lemma~\ref{lem42} to prove that $K_{\tau_{n_s}}K_{\tau_{n_s}}^\top \succeq \frac{\mu_*}{2}I$. 
Next we prove the statements of Lemma~\ref{lem:512}, for $i=n_s+1,\ldots, n_T$. Assume the statements of Lemma~\ref{lem:512} are true for $i=k$ for $n_s \leq k< n_T$, we show that these are also true for $i=k+1$. 

%Note that $Y_{\tau_{n_s}}$ is the Schur complement of $W_{\tau_{n_s}}$ and we have that $Y_{\tau_{n_s}}\succeq \frac{\tau_{n_s} \sigma^2}{40(2+k^2)}I_m$. Also note that $W_{\tau_{i+1}}\succeq W_{\tau_{i}}$ for all $i\geq 1$, since
%\begin{align*}
%W_{\tau_{i+1}}=\,& W_{\tau_{i}}+\begin{pmatrix}
%\sum_{s=\tau_i}^{\tau_{i+1}-1}x_{s}x_{s}^\top \,& \sum_{s=\tau_i}^{\tau_{i+1}-1}x_{s}u_{s}^\top\\[10pt]
%\sum_{s=\tau_i}^{\tau_{i+1}-1}u_{s}x_{s}^\top \,&	\sum_{s=\tau_i}^{\tau_{i+1}-1}u_{s}u_{s}^\top 
%\end{pmatrix}\\
%=\,& W_{\tau_{i}}+\sum_{s=\tau_i}^{\tau_{i+1}-1}\begin{pmatrix}
%x_{s} \\
%u_s 
%\end{pmatrix}\begin{pmatrix}
%x_s^\top \,& u_s^\top
%\end{pmatrix}\\
%\succeq \,& W_{\tau_{i}},
%\end{align*}
%and hence by Lemma~\ref{lem43} in Appendix A., we have that $Y_{\tau_{i+1}}\succeq Y_{\tau_{i}}$. 
Now for $\widehat{W}_{\tau_{i+1}}$ and by similar arguments as in Lemma~\ref{lem210}, we have that
\begingroup
\allowdisplaybreaks
\begin{align*}
\widehat{W}_{\tau_{i+1}}=\,& \begin{pmatrix}
\sum_{s=1}^{\tau_{i+1}-1}x_s x_s^\top + \lambda I_n + \frac{\gamma_{i+1}}{1-\gamma_{i+1}} Y_{\tau_{i+1}}\,& \sum_{s=1}^{\tau_{i+1}-1} x_s u_s^\top\\[10pt]
\sum_{s=1}^{\tau_{i+1}-1}u_s x_s^\top \,& \sum_{s=1}^{\tau_{i+1}-1} u_s u_s^\top + \lambda I_m 
\end{pmatrix}\\[10pt]
%\succeq \,& \begin{pmatrix}
%\sum_{s=1}^{\tau_{i+1}-1}x_s x_s^\top + \lambda I_n + \frac{\gamma_{i+1}}{1-\gamma_{i+1}} Y_{\tau_i}\,& \sum_{s=1}^{\tau_{i+1}-1} x_s u_s^\top\\[10pt]
%\sum_{s=1}^{\tau_{i+1}-1}u_s x_s^\top \,& \sum_{s=1}^{\tau_{i+1}-1} u_s u_s^\top + \lambda I_m 
%\end{pmatrix}\\[10pt]
%= \,& \begin{pmatrix}
%\sum_{s=1}^{\tau_{i}-1}x_s x_s^\top + \lambda I_n + \frac{\gamma_{i}}{1-\gamma_{i}} Y_{\tau_i}\,& \sum_{s=1}^{\tau_{i}-1} x_s u_s^\top\\[10pt]
%\sum_{s=1}^{\tau_{i}-1}u_s x_s^\top \,& \sum_{s=1}^{\tau_{i}-1} u_s u_s^\top + \lambda I_m 
%\end{pmatrix}\\
%\,&+\begin{pmatrix}
%\sum_{s=\tau_i}^{\tau_{i+1}-1}x_s x_s^\top + \frac{\gamma_{i+1}-\gamma_i}{(1-\gamma_{i+1})(1-\gamma_i)} Y_{\tau_i}\,& \sum_{s=\tau_i}^{\tau_{i+1}-1} x_s u_s^\top\\[10pt]
%\sum_{s=\tau_i}^{\tau_{i+1}-1}u_s x_s^\top \,& \sum_{s=\tau_i}^{\tau_{i+1}-1} u_s u_s^\top 
%\end{pmatrix}\\[10pt]
%\succeq \,&\widehat{W}_{\tau_{i}} + \begin{pmatrix}
%\sum_{s=\tau_i}^{\tau_{i+1}-1}x_s x_s^\top + \frac{\gamma_{i+1}-\gamma_i}{(1-\gamma_{i+1})(1-\gamma_i)} Y_{\tau_1}\,& \sum_{s=\tau_i}^{\tau_{i+1}-1} x_s u_s^\top\\[10pt]
%\sum_{s=\tau_i}^{\tau_{i+1}-1}u_s x_s^\top \,& \sum_{s=\tau_i}^{\tau_{i+1}-1} u_s u_s^\top 
%\end{pmatrix}\\[10pt]
\succeq\,&\widehat{W}_{\tau_{i}}+\begin{pmatrix}
I_n \\ K_{\tau_i}
\end{pmatrix}
\sum_{s=\tau_i}^{\tau_{i+1}-1}x_{s}x_{s}^\top \begin{pmatrix}
I_n \,& K_{\tau_i}^\top
\end{pmatrix}+\begin{pmatrix}
\frac{\gamma_{i+1}-\gamma_i}{(1-\gamma_{i+1})(1-\gamma_i)} Y_{\tau_1} \,& 0\\[10pt]
0 \,& 0
\end{pmatrix}
\end{align*}
\endgroup
Using Lemma~\ref{thm20} and the fact that $\mathbb{E}[x_s x_s^\top]\succeq \sigma^2 I_n$, for the event $\mathcal{E}_x$, we have $\sum_{s=\tau_{i}}^{\tau_{i+1}-1}x_{s}x_{s}^\top\succeq \frac{(\tau_{i+1}-\tau_i)\sigma^2}{40}I_n$, and hence we have that
\begin{align*}
\widehat{W}_{\tau_{i+1}}\succeq \,& \widehat{W}_{\tau_{i}}+\frac{(\tau_{i+1}-\tau_{i})\sigma^2}{40}\begin{pmatrix}
I_n \\ K_{\tau_i}
\end{pmatrix}\begin{pmatrix}
I_n \,& K_{\tau_i}^\top
\end{pmatrix}+\begin{pmatrix}
\frac{\gamma_{i+1}-\gamma_i}{(1-\gamma_{i+1})(1-\gamma_i)} \frac{\tau_1 \sigma^2}{40(2+k^2)}I \,& 0\\[10pt]
0 \,& 0
\end{pmatrix}\\[10pt]
= \,& \widehat{W}_{\tau_{i}}+ \frac{\sigma^2(\tau_{i+1}-\tau_i)}{40}\begin{pmatrix}
I_n + \frac{\gamma_{i+1}-\gamma_i}{(1-\gamma_{i+1})(1-\gamma_i)}\frac{\tau_1}{(\tau_{i+1}-\tau_i)}\frac{1}{2+k^2}I_n \,& K_{\tau_i}^\top\\[10pt]
K_{\tau_i} \,& K_{\tau_i}K_{\tau_i}^\top 
\end{pmatrix}.
\end{align*}
Now given that $\tau_i=4^{i-1}\tau_1$, we have $\frac{\tau_1}{(\tau_{i+1}-\tau_i)}=\frac{1}{3(4^{i-1})}$. Let $\gamma_i$ be such that $(1-\gamma_{i})\leq \frac{1}{4^{i}}$ and $1-\gamma_{i+1}\leq \frac{1}{4}(1-\gamma_i)$. Then we have 
\begin{align*}
\frac{\gamma_{i+1}-\gamma_i}{(1-\gamma_{i+1})(1-\gamma_i)}=\,&\frac{1}{1-\gamma_{i+1}}-\frac{1}{1-\gamma_i}\\
\geq\,& \frac{3}{1-\gamma_i}\geq 3(4^{i}).
\end{align*}
By defining $p=\frac{4}{(2+k^2)}$, we have that \[
p\leq \frac{\gamma_{i+1}-\gamma_i}{(1-\gamma_{i+1})(1-\gamma_i)}\frac{\tau_1}{(\tau_{i+1}-\tau_i)}\frac{1}{2+k^2},\]
and also
\begin{align*}
\widehat{W}_{\tau_{i+1}}\succeq \widehat{W}_{\tau_{i}}+ \frac{\sigma^2(\tau_{i+1}-\tau_i)}{40}\begin{pmatrix}
I_n + pI_m \,& K_{\tau_i}^\top\\[10pt]
K_{\tau_i} \,& K_{\tau_i}K_{\tau_i}^\top 
\end{pmatrix}.
\end{align*}
By Lemma~\ref{lem2} and using the induction hypothesis $\widehat{W}_{\tau_{i}}\succeq \frac{\sigma^2\tau_i}{\max\{40(2+k^2),\frac{20}{\mu_*p}(\mu_*+2p+2)\}}I_{n+m}$ and $K_{\tau_i}K_{\tau_i}^\top \succeq \frac{\mu_*}{2} I$, we have that
\begin{align*}
\widehat{W}_{\tau_{i+1}}\succeq \,&\frac{\sigma^2\tau_i}{\max\{40(2+k^2),\frac{20}{\mu_*p}(\mu_*+2p+2)\}}I_{n+m}+ \frac{\sigma^2(\tau_{i+1}-\tau_i)\mu_*p}{20(\mu_*+2p+2)}I_{n+m}\\
\succeq \,&\frac{\sigma^2\tau_{i+1}}{\max\{40(2+k^2),\frac{20}{\mu_*p}(\mu_*+2p+2)\}}I_{n+m}.
\end{align*}

Next we can show that $\|x_t\|^2\leq x_b$, $\|u_t\|^2\leq k^2 x_b$, and $\|z_t\|^2\leq \lambda$ for all $\tau_i\leq t < \tau_{i+1}$, and $\|\Delta_{\tau_{i+1}}\|\leq \epsilon_0 r^{-i+1}$, where $x_b$ is given by~\eqref{x_b2} and $\lambda=(1+k)^2 x_b$ by a similar argument as in Lemma~\ref{lem210}.
Finally, by applying Lemma~\ref{lem42} in Appendix A., we conclude that $K_{\tau_{i+1}}$ is $(k,\ell)$-strongly stable and $K_{\tau_{i+1}}K_{\tau_{i+1}}^\top\succeq \frac{\mu_*}{2} I$.
\end{proof}

\begin{proof}[of Theorem~\ref{thm2}]
Let the event $\mathcal{E}=\mathcal{E}_x\cap \mathcal{E}_W \cap \mathcal{E}_w \cap \mathcal{E}_\eta \cap \mathcal{E}_\Delta$. The regret can be written as 
\begin{align*}
\mathbb{E}[\mathcal{R}_T]={J}_1+ {J}_2 + {J}_3-TJ_*,
\end{align*}
where 
\begin{align*}
J_1=\,&\mathbb{E}\Big[\textbf{1}\{\mathcal{E}\}\sum_{i=1}^{n_T}\sum_{t=\tau_i}^{\tau_{i+1}-1}x_t^\top Q x_t+u_t^\top R u_t\Big], \\
J_2=\,&\mathbb{E}\Big[\textbf{1}\{\mathcal{E}^c\}\sum_{t=\tau_{n_s}}^{T}x_t^\top Q x_t+u_t^\top R u_t\Big], \\
J_3=\,&\mathbb{E}\Big[\sum_{t=0}^{\tau_{n_s}-1}x_t^\top Q x_t+u_t^\top R u_t\Big].
\end{align*}
In the following lemmas, we will bound each term.
\begin{lemma}\label{lemj12}
$J_1\leq TJ_*+n_T(3C_0 \epsilon_0^2 \tau_1+4 \alpha_1 k^6 x_b)$,
where $C_0$ and $\epsilon_0$ are positive constants given in Lemma~\ref{lemm}, and $x_b$ is given in~\eqref{x_b2}.
\end{lemma}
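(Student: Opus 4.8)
The plan is to mirror the proof of Lemma~\ref{lemj1}, adapting the geometric bookkeeping to the parameters of Algorithm~\ref{alg2}, where the windows satisfy $\tau_i=\tau_1 4^{i-1}$ (so the ratio is $r^2=4$) and the estimation error decays as $\|\Delta_{\tau_i}\|\le\epsilon_0 2^{-i+1}$. The $K_{\tau_i}$-dependent cost in $J_1$ is collected from the periods in which the controller actually plays the learned gain $u_t=K_{\tau_i}x_t$, i.e.\ $i=n_s,\dots,n_T$; the earlier exploration periods are absorbed into $J_3$. For each such $i$ I would introduce the events $\mathcal{E}_{\tau_i}=\{\|\Delta_{\tau_i}\|\le\epsilon_0 2^{-i+1}\}$ and $\mathcal{S}_{\tau_i}=\{\|x_{\tau_i}\|^2\le x_b\}$, and note via Lemma~\ref{lem:512} (parts~2 and~5) that $\mathcal{E}\subset\mathcal{E}_{\tau_i}\cap\mathcal{S}_{\tau_i}$.

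On $\mathcal{E}$ the algorithm never aborts, so the stage cost equals $x_t^\top(Q+K_{\tau_i}^\top R K_{\tau_i})x_t$, giving
\[
\textbf{1}\{\mathcal{E}\}\sum_{t=\tau_i}^{\tau_{i+1}-1}\big(x_t^\top Q x_t+u_t^\top R u_t\big)\le \textbf{1}\{\mathcal{E}_{\tau_i}\cap\mathcal{S}_{\tau_i}\}\sum_{t=\tau_i}^{\tau_{i+1}-1}x_t^\top(Q+K_{\tau_i}^\top R K_{\tau_i})x_t.
\]
Since $\mathcal{E}_{\tau_i}$, $\mathcal{S}_{\tau_i}$ and $K_{\tau_i}$ are all determined by $(x_{\tau_i},A_{\tau_i},B_{\tau_i})$, I would take a total expectation conditioned on this triple. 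On $\mathcal{E}_{\tau_i}$ we have $\|\Delta_{\tau_i}\|\le\epsilon_0$, so Lemma~\ref{lem42} makes $K_{\tau_i}$ $(k,\ell)$-strongly stable with $\ell=1/(2k^2)$, and Lemma~\ref{lem40} yields the conditional bound $(\tau_{i+1}-\tau_i)J(K_{\tau_i})+\tfrac{2\alpha_1 k^4}{\ell}\|x_{\tau_i}\|^2$, whose last term is at most $4\alpha_1 k^6 x_b$ on $\mathcal{S}_{\tau_i}$.

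The crux is then the geometric cancellation. Invoking Lemma~\ref{lemm} on $\mathcal{E}_{\tau_i}$ gives $J(K_{\tau_i})-J_*\le C_0\|\Delta_{\tau_i}\|^2\le C_0\epsilon_0^2 4^{-(i-1)}$, and the window length is $\tau_{i+1}-\tau_i=3\tau_1 4^{i-1}$, so the excess cost per period is exactly
\[
(\tau_{i+1}-\tau_i)\,C_0\epsilon_0^2 4^{-(i-1)}=3C_0\epsilon_0^2\tau_1,
\]
independent of $i$; this is where the factor $3=r^2-1$ (with $r^2=4$) appears in place of the $(r-1)$ of Lemma~\ref{lemj1}. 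Summing over the at most $n_T$ active periods, and using $\sum_i(\tau_{i+1}-\tau_i)\le T$ for the $J_*$ contribution, delivers $J_1\le TJ_*+n_T\big(3C_0\epsilon_0^2\tau_1+4\alpha_1 k^6 x_b\big)$.

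I expect the main obstacle to be the careful accounting of the two-phase structure of Algorithm~\ref{alg2}: one must check that the warm-up periods $i<n_s$, where $u_t=K_0x_t+\eta_t$, are cleanly assigned to $J_3$ so that $J_1$ collects only the learned-gain periods, and that the measurability used in the conditioning step (that $\mathcal{E}_{\tau_i},\mathcal{S}_{\tau_i},K_{\tau_i}$ are functions of $(x_{\tau_i},A_{\tau_i},B_{\tau_i})$) still holds verbatim under the modified estimator of Algorithm~\ref{alg2}. Once the decay rate $\|\Delta_{\tau_i}\|\le\epsilon_0 2^{-i+1}$ from Lemma~\ref{lem:512} is in hand, the cancellation itself is routine.
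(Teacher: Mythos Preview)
Your proposal is correct and matches the paper's approach: the paper simply states that the proof is identical to Lemma~\ref{lemj1}, and your argument is precisely that proof specialized to the parameters of Algorithm~\ref{alg2} ($\tau_i=\tau_1 4^{i-1}$, $\|\Delta_{\tau_i}\|\le\epsilon_0 2^{-i+1}$), with the geometric cancellation $(\tau_{i+1}-\tau_i)\cdot C_0\epsilon_0^2 4^{-(i-1)}=3C_0\epsilon_0^2\tau_1$ producing the factor $3$. Your added care about the two-phase structure---that the exploratory periods $i<n_s$ belong to $J_3$ so that $J_1$ collects only the learned-gain periods---is a point the paper leaves implicit.
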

\begin{proof}
The proof is identical to Lemma~\ref{lemj1}.
\end{proof}

\begin{lemma}\label{lemj22}
$J_2\leq (J(K_0)+2\alpha_1 k^2 x_b)T^{-1}+8\alpha_1 k^8(1+8\phi^2)x_b T^{-2}$.
\end{lemma}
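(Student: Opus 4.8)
The plan is to observe that this statement is word-for-word the bound of Lemma~\ref{lemj2} for Algorithm~\ref{alg}, and that the quantity $J_2$ depends on the algorithm only through its behavior on the failure event $\mathcal{E}^c$ for indices $t\ge \tau_{n_s}$. Since for those indices the control/abort logic of Algorithm~\ref{alg2} (play $u_t=K_{\tau_i}x_t$ unless $\|x_t\|^2>x_b$ or $\|K_{\tau_i}\|>k$, in which case switch permanently to $u_t=K_0x_t$) coincides with that of Algorithm~\ref{alg}, and since the event $\mathcal{E}$ again satisfies $\mathbb{P}(\mathcal{E}^c)\le T^{-2}$, I would argue that the proof of Lemma~\ref{lemj2}---itself the proof of Lemma~9 of~\cite{AC-AC-TK:2020}---applies verbatim. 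The additional warm-up used to certify $K_{\tau_i}K_{\tau_i}^\top\succeq \tfrac32\mu_i I$ and the use of a hint on $A_*$ instead of $B_*$ only affect the estimates, not the abort mechanism, so they are irrelevant to $J_2$.

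To give the argument substance I would first write $c_t=x_t^\top Qx_t+u_t^\top Ru_t\le \alpha_1(\|x_t\|^2+\|u_t\|^2)$, introduce the abort time $T_{\mathrm{abort}}\in\{\tau_{n_s},\dots,T+1\}$, and split $\sum_{t=\tau_{n_s}}^{T}c_t$ into a pre-abort and a post-abort part. For $\tau_{n_s}\le t<T_{\mathrm{abort}}$ the abort rule forces $\|x_t\|^2\le x_b$ and $u_t=K_{\tau_i}x_t$ with $\|K_{\tau_i}\|\le k$, so $c_t\le \alpha_1(1+k^2)x_b$; summing over at most $T$ steps and using $\mathbb{P}(\mathcal{E}^c)\le T^{-2}$ produces a term of order $\alpha_1 k^2 x_b\,T^{-1}$, which I expect to account for the $2\alpha_1 k^2 x_b\,T^{-1}$ contribution. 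For $t\ge T_{\mathrm{abort}}$ the controller plays the always-stabilizing $u_t=K_0x_t$, so the state follows the fixed strongly stable recursion $x_{t+1}=(A_*+B_*K_0)x_t+w_t$ from an initial value bounded by $\|x_{T_{\mathrm{abort}}}\|\le (1+k)\phi\sqrt{x_b}+\|w_{T_{\mathrm{abort}}-1}\|$.

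Here I would invoke the cost bound for strongly stable systems (Lemma~\ref{lem40} in Appendix~A.) to split the post-abort cost into a steady-state piece, bounded by $J(K_0)$ per step and hence contributing $T\cdot J(K_0)\cdot\mathbb{P}(\mathcal{E}^c)\le J(K_0)T^{-1}$, and a transient piece proportional to $\|x_{T_{\mathrm{abort}}}\|^2$, whose strong-stability amplification (a factor of order $k^8$) together with $\mathbb{P}(\mathcal{E}^c)\le T^{-2}$ I expect to yield the $8\alpha_1 k^8(1+8\phi^2)x_b\,T^{-2}$ term. Combining the three pieces would give the claimed inequality.

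The main obstacle is that $\mathcal{E}^c$ contains the sub-events $\mathcal{E}_w^c\cup\mathcal{E}_\eta^c$ on which the noise is not bounded, so the per-step cost cannot be controlled deterministically and a naive Cauchy--Schwarz split by $\sqrt{\mathbb{P}(\mathcal{E}^c)}$ is too lossy once summed over $T$ steps. The correct route---and the reason the full computation is deferred to~\cite{AC-AC-TK:2020}---is to bound $\mathbb{E}[\mathbf{1}\{\mathcal{E}^c\}\|x_t\|^2]$ directly using the Gaussian moment estimates for $w_t$ together with the strong stability of the post-abort $K_0$-dynamics. Since these moment bounds and the post-abort dynamics are identical to those in Algorithm~\ref{alg} (the relevant state and noise bounds appear in Lemma~\ref{lem:505} and Lemma~\ref{lem:512}), I would conclude that the estimate transfers without change, establishing the lemma.
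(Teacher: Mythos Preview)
Your proposal is correct and matches the paper's approach exactly: the paper's proof of Lemma~\ref{lemj22} is the single sentence ``The proof is identical to Lemma~\ref{lemj2},'' and Lemma~\ref{lemj2} in turn defers to Lemma~9 of~\cite{AC-AC-TK:2020}. Your observation that the abort mechanism for $t\ge\tau_{n_s}$ in Algorithm~\ref{alg2} coincides with that of Algorithm~\ref{alg}, and that $\mathbb{P}(\mathcal{E}^c)\le T^{-2}$ holds in both settings, is precisely the justification the paper leaves implicit; the sketch you provide of the pre-abort/post-abort split is in fact more detailed than anything the paper offers.
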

\begin{proof}
The proof is identical to Lemma~\ref{lemj2}.
\end{proof}

\begin{lemma}\label{lemj32}
$J_3\leq (1+\phi^2) \left(65J(K_0)\max\big\{1,\frac{\mu^2_1}{\mu_*^2}\big\}\tau_1+8\alpha_1 m \sigma^2 k^{14} \log^2 3T\right)$
\end{lemma}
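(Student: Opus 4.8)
The plan is to recognize that $J_3$ is nothing but the cost accrued during the (now extended) warm-up/exploration phase, i.e.\ over the random horizon $t=0,\dots,\tau_{n_s}-1$, throughout which the controller plays the fixed policy $u_t=K_0x_t+\eta_t$. Hence the closed loop is the \emph{time-invariant} strongly stable system $x_{t+1}=(A_*+B_*K_0)x_t+(w_t+B_*\eta_t)$ started at $x_1=0$, with effective noise covariance $\sigma^2(I+B_*B_*^\top)$ --- exactly the situation analyzed in Lemma~\ref{lemj3}. The single genuinely new feature is that the number of warm-up epochs $n_s$, and therefore the stopping horizon $\tau_{n_s}$, is random, being governed by the data-dependent rule $K_{\tau_i}K_{\tau_i}^\top\succeq\tfrac32\mu_i I$.

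First I would split according to the good event $\mathcal{E}=\mathcal{E}_x\cap\mathcal{E}_W\cap\mathcal{E}_w\cap\mathcal{E}_\eta\cap\mathcal{E}_\Delta$, writing $J_3=\mathbb{E}[\mathbf{1}\{\mathcal{E}\}\sum_{t<\tau_{n_s}}c_t]+\mathbb{E}[\mathbf{1}\{\mathcal{E}^c\}\sum_{t<\tau_{n_s}}c_t]$ with $c_t=x_t^\top Q x_t+u_t^\top R u_t\ge 0$. On $\mathcal{E}$, Lemma~\ref{lem:513} gives $n_s\le 2+\max\{1,\log_2(\mu_1/\mu_*)\}$; since $\tau_i=\tau_1 4^{i-1}$ and $4^{\log_2 x}=x^2$, this yields the deterministic horizon bound $\tau_{n_s}\le 16\tau_1\max\{1,\mu_1^2/\mu_*^2\}$. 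Using nonnegativity of $c_t$ to drop the indicator and extend the sum to this deterministic length, I would then repeat the reduction of Lemma~\ref{lemj3}: bound the per-epoch cost by the infinite-horizon cost of the stable closed loop via Lemma~\ref{lem40} with $x_1=0$, and use $J(K_0,\sigma^2(I+B_*B_*^\top))\le(1+\phi^2)J(K_0)$. This produces the first claimed term, $65(1+\phi^2)J(K_0)\max\{1,\mu_1^2/\mu_*^2\}\tau_1$, the constant $65$ absorbing the $16$ above together with the boundary terms $\tfrac{2\alpha_1 k^4}{\ell}\|x_{\tau_i}\|^2$ accumulated at the epoch boundaries.

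Next I would control the bad-event contribution. On $\mathcal{E}^c$ (which has probability at most $T^{-2}$) the stopping rule gives no useful bound beyond $\tau_{n_s}\le\tau_{n_T}\le T$, so I bound the cost in the worst case. Since the warm-up policy does not depend on $\mathcal{E}$, the deterministic state bound of Lemma~\ref{lem104}, $\|x_t\|\le 2k^3\max_s\|\tilde w_s\|$ with $\tilde w_s=w_s+B_*\eta_s$, still holds, whence $c_t\le\mathrm{poly}(k)(1+\phi^2)(\max_s\|w_s\|^2+\max_s\|\eta_s\|^2)$ and $\sum_{t<\tau_{n_s}}c_t\le T\,\mathrm{poly}(k)(1+\phi^2)(\max_s\|w_s\|^2+\max_s\|\eta_s\|^2)$. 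Applying Cauchy--Schwarz with $\mathbb{P}(\mathcal{E}^c)^{1/2}\le T^{-1}$ against the maximal fourth-moment estimates for the Gaussian sequences $\{w_s\}$ and $\{\eta_s\}$ cancels the factor $T$ and leaves the lower-order residual $(1+\phi^2)\,8\alpha_1 m\sigma^2 k^{14}\log^2 3T$, with the dimension $m$ coming from the perturbation $\eta_t\in\real^m$ and the logarithmic powers from the maximal inequalities; this is precisely the worst-case piece inherited from the warm-up analysis of \cite{AC-AC-TK:2020}.

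The hard part will be the interplay between the random stopping index $n_s$ and the accumulated cost: because $n_s$ is determined by the realized trajectory, $\tau_{n_s}$ is correlated with the costs $c_t$, so Lemma~\ref{lemj3} cannot be invoked verbatim and one must use the clean horizon bound $\tau_{n_s}\le 16\tau_1\max\{1,\mu_1^2/\mu_*^2\}$ only on $\mathcal{E}$, where Lemma~\ref{lem:513} is available. The most delicate step is ensuring that the rare event $\mathcal{E}^c$, on which the horizon can be as large as $T$ and the states unbounded, nonetheless contributes only a poly-logarithmic residual; balancing the factor $T$ from the horizon against the $T^{-2}$ from $\mathbb{P}(\mathcal{E}^c)$ through the right maximal moment bounds is where the estimate must be carried out with care.
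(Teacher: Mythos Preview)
Your overall strategy is sound and is essentially the natural route (which is presumably what Lemma~29 of \cite{AC-AC-TK:2020} does; the paper itself gives no proof here and simply defers to that reference): on $\mathcal{E}$ use Lemma~\ref{lem:513} to cap the random warm-up horizon $\tau_{n_s}$ deterministically by $\tau_1\cdot 4^{\,1+\max\{1,\log_2(\mu_1/\mu_*)\}}\le 64\tau_1\max\{1,\mu_1^2/\mu_*^2\}$, couple with the auxiliary process that plays $u_t=K_0x_t+\eta_t$ for all $t$, and invoke Lemma~\ref{lem40} with the effective noise covariance $\sigma^2(I+B_*B_*^\top)$ exactly as in Lemma~\ref{lemj3}; on $\mathcal{E}^c$ use $\mathbb{P}(\mathcal{E}^c)\le T^{-2}$ together with the deterministic warm-up state bound from Lemma~\ref{lem104} and Gaussian maximal moments to absorb the factor of~$T$.

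Two small remarks. First, your mention of ``boundary terms $\tfrac{2\alpha_1 k^4}{\ell}\|x_{\tau_i}\|^2$ accumulated at the epoch boundaries'' is a slight misstep: throughout $t<\tau_{n_s}$ the controller plays the \emph{single} fixed policy $u_t=K_0x_t+\eta_t$, so there are no controller switches and no boundary terms (indeed $x_1=0$ kills the only initial-state term in Lemma~\ref{lem40}); the constant $65$ comes purely from the horizon arithmetic, not from boundary accumulation. Second, to make the good-event step rigorous you should make the coupling explicit: since $\tau_{n_s}\le\bar\tau$ holds only on $\mathcal{E}$, one cannot simply ``extend the sum'' of the \emph{actual} costs $c_t$ beyond $\tau_{n_s}$ (the policy changes there); rather one uses $\sum_{t<\tau_{n_s}}c_t=\sum_{t<\tau_{n_s}}\tilde c_t\le\sum_{t<\bar\tau}\tilde c_t$ for the coupled process $\tilde x_t$ and then drops the indicator before applying Lemma~\ref{lem40} to $\{\tilde x_t\}$. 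You clearly have this in mind, but it is worth spelling out since the correlation between $n_s$ and the trajectory is precisely the subtlety you flagged.
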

\begin{proof}
It can be proved using the same proof as in Lemma~29 of \cite{AC-AC-TK:2020}.
\end{proof}

By applying Lemmas~\ref{lemj1}, \ref{lemj2}, and \ref{lemj3}, we conclude that
\begin{align}
\mathbb{E}[\mathcal{R}_T]=&n_T(3C_0 \epsilon_0^2 \tau_1+4 \alpha_1 k^6 x_b)\nonumber \\
	&+(J(K_0)+2\alpha_1 k^2 x_b)T^{-1}+8\alpha_1 k^8(1+8\phi^2)x_b T^{-2}\nonumber\\
	&+(1+\phi^2) \left(65J(K_0)\max\big\{1,\frac{\mu^2_1}{\mu_*^2}\big\}\tau_1+8\alpha_1 m \sigma^2 k^{14} \log^2 3T\right)\label{eq:911}
\end{align}
By substituting the values of $\tau_1$, $x_b$, and $n_T$ in~\eqref{eq:911}, we have that
\begingroup
\allowdisplaybreaks
\begin{align}
\mathbb{E}[\mathcal{R}_T]\leq\,& 135\log(T)\Big(3C_0 (240(1+k^2)(1+\phi^2)\max\{(2+k^2),\frac{1}{2\mu_*p}(\mu_*+2p+2)\}(n+m)+\sigma^2)\nonumber\\
\,&+4 \alpha_1 k^6\sigma^2\Big)nk^2\max\{(1+\phi)^2k^6,4k^6\}\log(4T)\nonumber\\
\,&+(J(K_0)+270\alpha_1 nk^4\sigma^2\max\{(1+\phi)^2k^6,4k^6\}\log(4T))T^{-1}\nonumber\\
\,&+1080\alpha_1 (1+8\phi^2)nk^{10}\sigma^2\max\{(1+\phi)^2k^6,4k^6\}\log(4T) T^{-2}\nonumber\\
\,&+135\frac{15600(1+k^2)(1+\phi^2)\max\{(2+k^2),\frac{1}{2\mu_*p}(\mu_*+2p+2)\}(n+m)+\sigma^2}{\epsilon_0^2}\nonumber\\
\,&nk^2\max\{(1+\phi)^2k^6,4k^6\}\log(4T)(1+\phi^2)\nu \max\big\{1,\frac{\mu^2_1}{\mu_*^2}\big\}+8(1+\phi^2)\alpha_1 m \sigma^2 k^{14} \log^2 3T.\nonumber
\end{align}
\endgroup
By rearranging the terms we get 
\begingroup
\allowdisplaybreaks
\begin{align}
\mathbb{E}[\mathcal{R}_T]\leq\,& 135\Big(3C_0 (240(1+k^2)(1+\phi^2)\max\{(2+k^2),\frac{1}{2\mu_*p}(\mu_*+2p+2)\}(n+m)+\sigma^2)\nonumber\\
\,&+4 \alpha_1 k^6\sigma^2+\alpha_1 m \sigma^2 k^{8}\Big)nk^2\max\{(1+\phi)^2k^6,4k^6\}\log^2(T)\nonumber\\
\,&+135\frac{240(1+k^2)(1+\phi^2)\max\{(2+k^2),\frac{1}{2\mu_*p}(\mu_*+2p+2)\}(n+m)+\sigma^2}{\epsilon_0^2}\nonumber\\
\,&~~nk^2\max\{(1+\phi)^2k^6,4k^6\}(1+\phi^2)\nu\log(4T) \nonumber\\
\,&+{135}\Big(3C_0 (240(1+k^2)(1+\phi^2)\max\{(2+k^2),\frac{1}{2\mu_*p}(\mu_*+2p+2)\}(n+m)+\sigma^2)\nonumber\\
\,&+4 \alpha_1 k^6\sigma^2\Big)nk^2\max\{(1+\phi)^2k^6,4k^6\}\log(4)\log(T)\nonumber\\
\,&+(\nu+270\alpha_1 nk^4\sigma^2\max\{(1+\phi)^2k^6,4k^6\}\log(4T))T^{-1}\nonumber\\
\,&+1080\alpha_1 (1+8\phi^2)nk^{10}\sigma^2\max\{(1+\phi)^2k^6,4k^6\}\log(4T) T^{-2}.\label{eq:reg1}
\end{align}
\endgroup
Overall, noting that the dominant term is $\log^2(T)$ in the first two lines of~\eqref{eq:reg1}, we conclude that \[\mathbb{E}[\mathcal{R}_T]\leq \mathrm{poly} (\alpha_0, \alpha_1, \phi, \nu, m, n , r)\log^2(T).\]
\end{proof}
%\section{Conclusion}

% Acknowledgements should go at the end, before appendices and references

%\acks{ }

% Manual newpage inserted to improve layout of sample file - not
% needed in general before appendices/bibliography.
\vskip 0.2in
\bibliographystyle{plain}

%%\bibliography{alias, JC, BG, Main, Group-bib, Main-add}
\newpage

\appendix
\section*{Appendix A.}\label{appdx}
\label{app:theorem}

\begin{lemma}\cite[Lemma~34]{AC-AC-TK:2020}\label{lem13}
Let $w_t\in \real^n$ for $t=1,\ldots, T$ be i.i.d. random variables with distribution $\mathcal{N}(0,\sigma^2 I_n)$. Suppose that $T>2$. Then with probability at least $1-\delta$ we have that
\begin{equation}
\max_{1\leq t \leq T}\|w_t\|\leq \sigma \sqrt{5n\log\frac{T}{\delta}}
\end{equation}
\end{lemma}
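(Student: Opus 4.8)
The plan is to reduce the statement to a single-sample Gaussian-norm tail bound and then apply a union bound. Since the $w_t$ are i.i.d., a union bound gives
\[
\mathbb{P}\Big(\max_{1\le t\le T}\|w_t\| > \sigma\sqrt{5n\log(T/\delta)}\Big) \le \sum_{t=1}^{T}\mathbb{P}\Big(\|w_t\| > \sigma\sqrt{5n\log(T/\delta)}\Big) = T\,\mathbb{P}\Big(\|w_1\| > \sigma\sqrt{5n\log(T/\delta)}\Big),
\]
so it suffices to show that the per-sample failure probability is at most $\delta/T$. First I would exploit the fact that $\|w_1\|^2/\sigma^2$ is a chi-squared random variable with $n$ degrees of freedom, which turns the problem into a tail bound for $\chi^2_n$.

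For the chi-squared tail I would use the moment generating function $\mathbb{E}[e^{\lambda \chi^2_n}] = (1-2\lambda)^{-n/2}$ valid for $\lambda < 1/2$, together with the Chernoff bound; this yields the standard sub-exponential estimate
\[
\mathbb{P}\big(\chi^2_n \ge n + 2\sqrt{nx} + 2x\big) \le e^{-x}, \qquad x\ge 0.
\]
Setting $x = \log(T/\delta)$ makes the right-hand side exactly $\delta/T$, which is precisely the per-sample budget dictated by the union bound. It then remains to verify that the prescribed threshold dominates this Laurent--Massart threshold, i.e. that $n + 2\sqrt{nx} + 2x \le 5nx$ on the relevant range of $x$, so that the event $\{\|w_1\|^2 > 5n\sigma^2 x\}$ is contained in the bounded-probability event.

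The one genuinely delicate point is this final constant check, and it is where the hypothesis $T>2$ enters. Writing the inequality as $5nx - 2x - 2\sqrt{nx} - n \ge 0$ and dividing by $n$, one sees it is easiest for large $n$ (the left-hand side grows like $5x-1$ per unit of $n$) and binding at $n=1$, where it reduces to $3x - 2\sqrt{x} - 1 = (3\sqrt{x}+1)(\sqrt{x}-1)\ge 0$, i.e. $x\ge 1$. Since $T$ is an integer with $T>2$ we have $T\ge 3$ and hence $x = \log(T/\delta) > \log 3 > 1$, so the inequality holds for all $n\ge 1$ (monotonicity in $n$ reduces the check to the case $n=1$). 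Combining the resulting per-sample bound $\mathbb{P}\big(\|w_1\|>\sigma\sqrt{5n\log(T/\delta)}\big) \le \delta/T$ with the union bound completes the argument. The main obstacle is therefore not conceptual but the bookkeeping required to certify that the constant $5$ survives all the way down to $T=3$; everything above it is a routine concentration-plus-union-bound computation.
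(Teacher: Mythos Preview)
Your argument is correct: the union bound plus the Laurent--Massart chi-squared tail is exactly the natural route, and your constant check at $n=1$ together with the monotonicity in $n$ (the function $n\mapsto n(5x-1)-2\sqrt{nx}-2x$ is increasing for $n\ge 1$ when $x\ge 1$) is clean. The use of $T>2$ to force $x=\log(T/\delta)>1$ is also correct, provided one reads $T$ as an integer time horizon (so $T\ge 3$) and $\delta\in(0,1)$; both are implicit in the context.

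As for comparison with the paper: this lemma is not proved in the paper at all---it is quoted verbatim from \cite[Lemma~34]{AC-AC-TK:2020} in the appendix without argument. So there is no ``paper's own proof'' to compare against here; your write-up simply fills in what the paper takes as a black box, and does so by what is the standard (and presumably the original) method.
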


\begin{lemma}\cite[Lemma~37]{AC-AC-TK:2020}\label{lem105}
Let $z_s\in\real^m$ for $s=1,\ldots, t-1$ be such that $\|z_s\|^2\leq \lambda$ and define $W_t=\lambda I + \sum_{s=1}^{t-1}z_s z_s^\top$. Then we have that
\begin{equation}
\log\frac{\det(W_t)}{\det(\lambda I)}\leq m \log t.
\end{equation}
\end{lemma}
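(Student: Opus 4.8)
The plan is to prove this purely linear-algebraically via the arithmetic--geometric mean (AM--GM) inequality applied to the eigenvalues of $W_t$, avoiding any sequential or martingale argument. First I would note that $W_t=\lambda I+\sum_{s=1}^{t-1}z_s z_s^\top$ is positive definite, and let $\mu_1,\ldots,\mu_m>0$ be its eigenvalues, so that $\det(W_t)=\prod_{i=1}^m \mu_i$. Applying AM--GM to these eigenvalues gives $\prod_{i=1}^m \mu_i\le\big(\frac{1}{m}\sum_{i=1}^m \mu_i\big)^m=\big(\Tr(W_t)/m\big)^m$, converting the determinant into a trace quantity that is easy to control.

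Next I would bound the trace. Since $\Tr(z_s z_s^\top)=\|z_s\|^2\le\lambda$ and $\Tr(\lambda I)=m\lambda$, linearity of the trace yields $\Tr(W_t)=m\lambda+\sum_{s=1}^{t-1}\|z_s\|^2\le m\lambda+(t-1)\lambda=\lambda(m+t-1)$. Combining this with the previous step and using $\det(\lambda I)=\lambda^m$,
\[
\frac{\det(W_t)}{\det(\lambda I)}\le\frac{1}{\lambda^m}\Big(\frac{\lambda(m+t-1)}{m}\Big)^m=\Big(1+\frac{t-1}{m}\Big)^m,
\]
so that, taking logarithms, $\log\frac{\det(W_t)}{\det(\lambda I)}\le m\log\big(1+\frac{t-1}{m}\big)$.

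Finally, the only point requiring a little care is converting this into the clean bound $m\log t$. I would observe that for $m\ge 1$ we have $\frac{t-1}{m}\le t-1$, hence $1+\frac{t-1}{m}\le t$, and monotonicity of $\log$ closes the argument. I expect no genuine obstacle, as this is the standard log-determinant (elliptical potential) bound; the step to watch is precisely this last inequality, which quietly uses $m\ge 1$. An alternative route is the telescoping identity $\det(W_{s+1})=\det(W_s)\big(1+z_s^\top W_s^{-1} z_s\big)$ together with $\log(1+x)\le x$, but that yields a bound in terms of $\sum_s z_s^\top W_s^{-1} z_s$ and is less direct for producing the specific $m\log t$ form needed here, so I would favor the AM--GM proof.
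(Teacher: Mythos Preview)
Your argument is correct and is precisely the standard AM--GM proof of the elliptical-potential determinant bound. Note that the paper does not actually prove this lemma: it is stated in Appendix~A without proof, cited as \cite[Lemma~37]{AC-AC-TK:2020}, so there is no in-paper proof to compare against; your approach matches the standard derivation in the cited source.
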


\begin{lemma}\cite[Lemma~38]{AC-AC-TK:2020}\label{lem104}
Suppose $K$ is $(k,\ell)$-strongly stable controller and $s_0$, $s_1$ are integers such that $1\leq s_0 < s_1\leq T$. Let $x_s$ for $s=s_0, \ldots, s_1$ be the sequence of states generated under the controller $K$ starting from $x_{s_0}$. Then we have 
\begin{equation}
\|x_t\|\leq k(1-l)^{t-s_0}\|x_{s_0}\|+\frac{k}{l}\max_{1\leq t\leq T}\|w_t\|, \qquad \mathrm{for} \ \mathrm{all} \ s_0\leq t \leq s_1.
\end{equation}
\end{lemma}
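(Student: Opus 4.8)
The plan is to reduce to the autonomous closed-loop recursion and then exploit the similarity factorization built into the strong-stability definition. Substituting the control law $u_s=Kx_s$ into the system equation~\eqref{eq102} turns the dynamics into $x_{s+1}=Mx_s+w_s$, where $M:=A_*+B_*K$ is the closed-loop transition matrix. Unrolling this recursion from the starting index $s_0$ gives the explicit representation
\[
x_t=M^{\,t-s_0}x_{s_0}+\sum_{j=s_0}^{t-1}M^{\,t-1-j}w_j,
\]
valid for all $s_0\le t\le s_1$.

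The crucial estimate is a uniform bound on the operator norm of the powers $M^p$. By the definition of $(k,\ell)$-strong stability there exist $H\succ 0$ and $L$ with $M=HLH^{-1}$, $\|L\|\le 1-\ell$, and $\|H\|\|H^{-1}\|\le k$. Since $M^p=HL^pH^{-1}$, submultiplicativity of the norm yields
\[
\|M^p\|\le \|H\|\,\|H^{-1}\|\,\|L\|^p\le k(1-\ell)^p
\]
for every integer $p\ge 0$.

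Applying the triangle inequality to the unrolled expression and inserting this power bound gives
\[
\|x_t\|\le k(1-\ell)^{\,t-s_0}\|x_{s_0}\|+k\sum_{j=s_0}^{t-1}(1-\ell)^{\,t-1-j}\|w_j\|.
\]
Bounding each $\|w_j\|$ by $\max_{1\le s\le T}\|w_s\|$ and summing the geometric series, $\sum_{p\ge 0}(1-\ell)^p=1/\ell$, produces the claimed noise term $\frac{k}{\ell}\max_{1\le s\le T}\|w_s\|$, which (with $l=\ell$) is exactly the asserted inequality.

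The only substantive step is the power bound $\|M^p\|\le k(1-\ell)^p$. Because $M$ is in general nonsymmetric, one cannot read the geometric decay directly off its eigenvalues; it is precisely the conjugating matrix $H$ in the strong-stability definition that converts the spectral bound $\|L\|\le 1-\ell$ into a uniform control on the norms of all powers, at the cost of the conditioning factor $\|H\|\|H^{-1}\|\le k$. Everything else is the triangle inequality together with a convergent geometric sum, so no further difficulty is expected.
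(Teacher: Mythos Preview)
Your argument is correct and is exactly the standard one: unroll the closed-loop recursion, use the similarity factorization from strong stability to get $\|M^p\|\le k(1-\ell)^p$, then apply the triangle inequality and sum the geometric series. The paper does not give its own proof of this lemma---it is simply quoted from \cite[Lemma~38]{AC-AC-TK:2020}---and the cited reference proves it in precisely this way, so there is nothing further to compare.
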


\begin{lemma}\cite[Lemma~39]{AC-AC-TK:2020}\label{lem39}
Suppose $K_1, \ldots , K_r$ are $(k,\ell)$-strongly stable feedback gains and $\{t_i\}_{i=1}^{r+1}$ are integers such that $1\leq t_1 < \ldots <t_{r+1}\leq T $. For each $t_i$ Let $\{x_t\}$ be the sequence of states generated by starting from $x_{t_i}$ and playing controller $K_i$ at times $t_i\leq t < t_{i+1}$, i.e., $x_{t+1}=(A_*+B_*K_i)x_t+w_t$ for all $t_i\leq t< t_{i+1}$. Denote $\tau=\min_i\{t_{i+1}-t_i\}$ and suppose that $\tau\geq l^{-1}\log(2k)$. Then we have 
\begin{align*}
\|x_t\|\leq 3k\max\Big\{\frac{1}{2}\|x_{t_1}\|, \frac{k}{l}\max_{1\leq s\leq T}\|w_s\|\Big\}, \quad \mathrm{for} \ \mathrm{all} \ t_1\leq t\leq t_{r+1}
\end{align*}
\end{lemma}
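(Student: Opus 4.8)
The plan is to reduce the switched, multi-controller setting to repeated application of the single-controller bound in Lemma~\ref{lem104} (Lemma~38), and then to analyze a simple affine recursion for the state norms at the switching times $t_i$. Throughout I write $M=\max_{1\le s\le T}\|w_s\|$ and use that $k\ge 1$ (since $\|H\|\,\|H^{-1}\|\ge\|HH^{-1}\|=1$ forces the constant in the strong-stability definition to be at least $1$).

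First I would work window by window. On $t_i\le t< t_{i+1}$ the dynamics $x_{t+1}=(A_*+B_*K_i)x_t+w_t$ is time-invariant and $K_i$ is $(k,\ell)$-strongly stable, so Lemma~\ref{lem104} applies with $s_0=t_i$, $s_1=t_{i+1}$ to give $\|x_t\|\le k(1-\ell)^{t-t_i}\|x_{t_i}\|+\frac{k}{\ell}M$ for all $t_i\le t\le t_{i+1}$. The crux is to convert the window-length hypothesis $\tau\ge\ell^{-1}\log(2k)$ into a contraction: using $1-\ell\le e^{-\ell}$ and $t_{i+1}-t_i\ge\tau$, one finds $k(1-\ell)^{t_{i+1}-t_i}\le k\,e^{-\ell\tau}\le k\,e^{-\log(2k)}=\tfrac12$. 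Evaluating the single-window estimate at the endpoint $t=t_{i+1}$ then produces the recursion $\|x_{t_{i+1}}\|\le\tfrac12\|x_{t_i}\|+\frac{k}{\ell}M$ for the sequence $a_i:=\|x_{t_i}\|$.

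Next I would solve this affine contraction. Its fixed point is $\frac{2k}{\ell}M$, and a one-line observation (if $a_i\le\frac{2k}{\ell}M$ then $a_{i+1}\le\frac{2k}{\ell}M$; otherwise $a_{i+1}<a_i$) gives $a_{i+1}\le\max\{a_i,\frac{2k}{\ell}M\}$, so by induction $\|x_{t_i}\|\le\max\{\|x_{t_1}\|,\frac{2k}{\ell}M\}$ for every $i$. Finally, for an arbitrary $t$ lying in window $j$, I would apply the single-window bound once more with $(1-\ell)^{t-t_j}\le1$, giving $\|x_t\|\le k\max\{\|x_{t_1}\|,\frac{2k}{\ell}M\}+\frac{k}{\ell}M$. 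The constants then collapse cleanly via the identity $\max\{\|x_{t_1}\|,\frac{2k}{\ell}M\}=2\max\{\tfrac12\|x_{t_1}\|,\frac{k}{\ell}M\}$ together with $\frac{k}{\ell}M\le\max\{\tfrac12\|x_{t_1}\|,\frac{k}{\ell}M\}\le k\max\{\tfrac12\|x_{t_1}\|,\frac{k}{\ell}M\}$ (using $k\ge1$): writing $C=\max\{\tfrac12\|x_{t_1}\|,\frac{k}{\ell}M\}$ yields $\|x_t\|\le 2kC+kC=3kC$, which is exactly the claimed bound.

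The argument is essentially bookkeeping once Lemma~\ref{lem104} is granted. The single genuinely load-bearing step is the contraction observation that turns $\tau\ge\ell^{-1}\log(2k)$ into a per-window factor of exactly $\tfrac12$; everything downstream is forced. I therefore expect the only delicate point to be keeping the outer-maximum factorization $\max\{\|x_{t_1}\|,\frac{2k}{\ell}M\}=2\max\{\tfrac12\|x_{t_1}\|,\frac{k}{\ell}M\}$ aligned so that the sharp constant $3k$ emerges rather than a looser one.
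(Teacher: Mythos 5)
Your proposal is correct and takes essentially the same route as the source: the paper states Lemma~\ref{lem39} without proof, importing it from \cite{AC-AC-TK:2020}, whose argument likewise applies the single-window bound of Lemma~\ref{lem104} on each interval $[t_i,t_{i+1}]$, converts the hypothesis $\tau\geq \ell^{-1}\log(2k)$ into the per-window contraction $k(1-\ell)^{\tau}\leq k e^{-\ell\tau}\leq \tfrac12$, and inducts over the switching times via the affine recursion $\|x_{t_{i+1}}\|\leq \tfrac12\|x_{t_i}\|+\tfrac{k}{\ell}\max_{1\leq s\leq T}\|w_s\|$. Your final bookkeeping with $C=\max\bigl\{\tfrac12\|x_{t_1}\|,\tfrac{k}{\ell}\max_{1\leq s\leq T}\|w_s\|\bigr\}$, together with the observation $k\geq 1$, correctly recovers the stated constant $3k$.
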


\begin{lemma}\cite[Lemma~40]{AC-AC-TK:2020}\label{lem40}
Suppose $K$ is a $(k,\ell)$-strongly stable controller and let $x_s$ for $s=1,\ldots, t$ be the sequence of states generated under the control $K$ starting from $x_1$, i.e., $x_{s+1}=(A_*+B_*K)x_s+ w_s$ for all $1\leq s\le t$. Then we have that
$\mathbb{E}\Big[\sum_{s=1}^t x_s^\top(Q+K^\top R K)x_s\Big|x_1\Big]\leq t J(K)+\frac{2\alpha_1 k^4}{l}\|x_1\|^2$.
\end{lemma}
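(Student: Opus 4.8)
The plan is to exploit the fact that, under the feedback $u_s = K x_s$, the closed loop $x_{s+1} = M x_s + w_s$ with $M = A_* + B_* K$ is driven by mean-zero i.i.d.\ Gaussian noise, so the summand $x_s^\top (Q + K^\top R K) x_s$ is exactly the per-step LQR cost of the policy $K$, and its conditional expectation splits cleanly into a transient term governed by $x_1$ and a stationary term governed by the noise. Writing $S = Q + K^\top R K$, the first step is to record that $J(K) = \Tr(S \Sigma_\infty)$, where $\Sigma_\infty = \sigma^2 \sum_{j\geq 0} M^j (M^\top)^j$ is the stationary state covariance solving the Lyapunov equation $\Sigma_\infty = M \Sigma_\infty M^\top + \sigma^2 I_n$; this is finite because strong stability gives $\rho(M) \leq \|L\| \leq 1-\ell < 1$.

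Next I would unroll the recursion as $x_s = M^{s-1} x_1 + \sum_{j=1}^{s-1} M^{s-1-j} w_j$. Since the noise is mean-zero and independent of $x_1$, the cross term vanishes in expectation, giving
\begin{equation*}
\mathbb{E}[x_s^\top S x_s \mid x_1] = x_1^\top (M^\top)^{s-1} S M^{s-1} x_1 + \Tr(S \Sigma_s), \qquad \Sigma_s = \sigma^2 \sum_{j=0}^{s-2} M^j (M^\top)^j.
\end{equation*}
For the stationary part, $\Sigma_\infty - \Sigma_s = \sigma^2 \sum_{j \geq s-1} M^j (M^\top)^j \succeq 0$ and $S \succeq 0$, so $\Tr(S \Sigma_s) \leq \Tr(S \Sigma_\infty) = J(K)$; summing over $s = 1, \dots, t$ contributes at most $t J(K)$, which is precisely the first term of the claimed bound.

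The transient part is where the strong-stability hypothesis enters. Writing $M = H L H^{-1}$ with $\|L\| \leq 1 - \ell$ and $\|H\|\|H^{-1}\| \leq k$ gives $\|M^{s-1}\| \leq \|H\|\|H^{-1}\| \|L\|^{s-1} \leq k (1-\ell)^{s-1}$, hence $x_1^\top (M^\top)^{s-1} S M^{s-1} x_1 \leq \|S\| k^2 (1-\ell)^{2(s-1)} \|x_1\|^2$. Bounding $\|S\| \leq \|Q\| + \|K\|^2 \|R\| \leq \alpha_1(1 + k^2) \leq 2\alpha_1 k^2$ (using $k \geq 1$, which always holds since $\|H\|\|H^{-1}\| \geq \|H H^{-1}\| = 1$) and summing the geometric series via $\sum_{s \geq 1}(1-\ell)^{2(s-1)} = \frac{1}{1 - (1-\ell)^2} \leq \frac{1}{\ell}$ yields the transient bound $\frac{2\alpha_1 k^4}{\ell}\|x_1\|^2$. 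Adding the two contributions completes the proof.

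The main obstacle is not conceptual but rather keeping the constants sharp enough to match the statement: one must verify that $\|S\| \leq 2\alpha_1 k^2$ and, crucially, that $1 - (1-\ell)^2 = \ell(2-\ell) \geq \ell$, so that the geometric sum is controlled by $1/\ell$ rather than $1/\ell^2$; any looser estimate would spoil the $k^4/\ell$ dependence demanded by the bound. A secondary point to handle carefully is the identification $J(K) = \Tr(S \Sigma_\infty)$, which holds because the transient contribution of $M^{s-1} x_1$ to $\frac{1}{T}\mathbb{E}[\sum_{s \leq T} x_s^\top S x_s]$ vanishes as $T \to \infty$, leaving only the stationary term independent of the initial condition.
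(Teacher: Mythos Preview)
Your proof is correct. Note, however, that the paper does not prove this lemma itself---it is quoted verbatim from \cite{AC-AC-TK:2020} and stated in the appendix without argument---so there is no in-paper proof to compare against. Your route (unroll the recursion, split into the transient term $x_1^\top (M^\top)^{s-1} S M^{s-1} x_1$ and the stationary term $\Tr(S\Sigma_s)$, bound the latter by $J(K)$ via $\Sigma_s \preceq \Sigma_\infty$, and control the former by the geometric series $\sum_{s\ge 1} k^2(1-\ell)^{2(s-1)} \le k^2/\ell$ together with $\|S\| \le \alpha_1(1+k^2) \le 2\alpha_1 k^2$) is the standard argument and matches what the cited reference does. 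The constant bookkeeping you flag---$k \ge 1$ from $\|H\|\|H^{-1}\|\ge 1$, and $\ell(2-\ell)\ge \ell$---is exactly what is needed to land on $2\alpha_1 k^4/\ell$.
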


\begin{lemma}\cite[Lemma~41]{AC-AC-TK:2020}\label{Lem41}
Suppose $J(K)<J$, then $K$ is $(k,\ell)$-strongly stable with $k=\frac{J}{\alpha_0\sigma^2}$ and $\ell=\frac{\alpha_0 \sigma^2}{2J}$
\end{lemma}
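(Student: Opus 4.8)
The plan is to analyze $K$ through the value (cost-to-go) matrix of the closed-loop system. Writing $L_K = A_* + B_* K$ for the closed-loop transition matrix, the hypothesis $J(K) < J < \infty$ forces $L_K$ to be stable, so there is a unique $P_K \succeq 0$ solving the discrete Lyapunov equation
\begin{equation*}
P_K = L_K^\top P_K L_K + Q + K^\top R K,
\end{equation*}
and the cost is $J(K) = \sigma^2 \Tr(P_K)$. Two elementary consequences drive everything. First, since $Q + K^\top R K \succeq Q \succeq \alpha_0 I_n$, the Lyapunov equation gives $P_K \succeq \alpha_0 I_n$, hence $\lambda_{\min}(P_K) \geq \alpha_0$. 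Second, $\lambda_{\max}(P_K) \leq \Tr(P_K) = J(K)/\sigma^2 < J/\sigma^2$. In particular $J(K) \geq \sigma^2\lambda_{\min}(P_K) \geq \alpha_0\sigma^2$, so the hypothesis yields $J/(\alpha_0\sigma^2) > 1$; I will use this repeatedly to pass from square-root bounds to the stated $k$.

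For the norm bound, $P_K \succeq K^\top R K \succeq \alpha_0 K^\top K$ gives $\alpha_0 \|K\|^2 \leq \Tr(P_K) < J/\sigma^2$, so $\|K\| < \sqrt{J/(\alpha_0\sigma^2)} \leq J/(\alpha_0\sigma^2) = k$, where the last inequality uses $J/(\alpha_0\sigma^2)\geq 1$.

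The substantive step is exhibiting the similarity $(H,L)$. I would take $H = P_K^{-1/2}$ (the inverse positive-definite square root) and $L = P_K^{1/2} L_K P_K^{-1/2}$, so that $A_* + B_* K = L_K = H L H^{-1}$ and $H \succ 0$ by construction. To control $\|L\|$, substitute $L_K^\top P_K L_K = P_K - (Q + K^\top R K)$ from the Lyapunov equation:
\begin{equation*}
L^\top L = P_K^{-1/2} L_K^\top P_K L_K P_K^{-1/2} = I_n - P_K^{-1/2}(Q + K^\top R K) P_K^{-1/2}.
\end{equation*}
Since $Q + K^\top R K \succeq \alpha_0 I_n$, the subtracted term is $\succeq \alpha_0 P_K^{-1} \succeq (\alpha_0/\lambda_{\max}(P_K)) I_n$, whence $\|L\|^2 = \lambda_{\max}(L^\top L) \leq 1 - \alpha_0/\lambda_{\max}(P_K) \leq 1 - \alpha_0\sigma^2/J = 1 - 2\ell$. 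The inequality $\sqrt{1-2\ell} \leq 1 - \ell$, valid because $(1-\ell)^2 = 1 - 2\ell + \ell^2 \geq 1 - 2\ell$, then gives $\|L\| \leq 1 - \ell$. Finally $\|H\|\|H^{-1}\| = \bigl(\lambda_{\max}(P_K)/\lambda_{\min}(P_K)\bigr)^{1/2} \leq (J/(\alpha_0\sigma^2))^{1/2} \leq J/(\alpha_0\sigma^2) = k$, again by $J/(\alpha_0\sigma^2) \geq 1$, completing all three requirements of strong stability.

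The main obstacle is the bound on $\|L\|$: one must choose the correct weighting ($H = P_K^{-1/2}$) so that the Lyapunov equation collapses into the clean identity above, and then recognize that the strong-stability margin $\ell$ is essentially half the relative per-step cost decrement $\alpha_0/\lambda_{\max}(P_K)$, the factor of two being absorbed by $\sqrt{1-2\ell}\le 1-\ell$. Establishing existence of $P_K$ from finiteness of $J(K)$ and the trace/eigenvalue comparisons are routine.
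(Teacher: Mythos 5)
Your proof is correct and follows essentially the same route as the proof of Lemma~41 in the cited reference~\cite{AC-AC-TK:2020} (this paper imports the lemma by citation rather than proving it): solve the closed-loop Lyapunov equation for $P_K$, take $H = P_K^{-1/2}$, and extract the contraction bound from $L^\top L = I - P_K^{-1/2}(Q + K^\top R K)P_K^{-1/2}$ together with $\alpha_0 I \preceq P_K$ and $\lambda_{\max}(P_K) \leq \Tr(P_K) < J/\sigma^2$, finishing with $\sqrt{1-2\ell}\leq 1-\ell$. The only cosmetic difference is that the reference states the sharper constant $k=\sqrt{J/(\alpha_0\sigma^2)}$, while you prove the weaker version quoted here by absorbing the square roots via $J/(\alpha_0\sigma^2)\geq 1$, which is exactly what is needed.
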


\begin{lemma}\cite[Lemma~42]{AC-AC-TK:2020}\label{lem42}
Let $A\in \real^{n\times n}$, $B\in \real^{n \times m}$ and denote $\Delta=\|(A-A_\star \quad B-B_\star)\|$. Taking $K=\mathrm{dare}(A,B,Q,R)$ and denoting $k=\sqrt{\frac{\nu+C_0\epsilon_0^2}{\alpha_0\sigma^2}}$, and $l=\frac{1}{2k^2}$ where $J_\star\leq\nu$, and $Q,R\succeq \alpha_0 I$, we have that
\begin{enumerate}
\item If $\Delta\leq \epsilon_0$ then $K$ is $(k,\ell)$-strongly stable;
\item If $\Delta\leq \min\big\{\epsilon_0,\frac{\mu}{4kC_0}\big\}$ then $KK^\top\succeq K_*K_*^\top-\frac{\mu}{2}I$ and $K_*K_*^\top \succeq KK^\top-\frac{\mu}{2}I$;
\item If $\Delta\leq \min\big\{\epsilon_0,\frac{\mu_*}{4kC_0}\big\}$ then $KK^\top \succeq \frac{\mu_*}{2}I$.
\end{enumerate} 
\end{lemma}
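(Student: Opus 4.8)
The plan is to derive all three parts from two ingredients already in hand: the certainty-equivalence perturbation bound of Lemma~\ref{lemm}, which controls both the suboptimality $J(K)-J_*$ and the gain error $\|K-K_*\|$ in terms of $\Delta$, and the cost-to-strong-stability conversion of Lemma~\ref{Lem41}. Part~1 is then essentially immediate: since $\Delta\le\epsilon_0$, Lemma~\ref{lemm} gives $J(K)\le J_*+C_0\epsilon_0^2\le\nu+C_0\epsilon_0^2$, where I use $J_*\le\nu$; feeding $J=\nu+C_0\epsilon_0^2$ into Lemma~\ref{Lem41} yields a $(k,\ell)$-strongly stable $K$ with precisely the constants $k=\sqrt{(\nu+C_0\epsilon_0^2)/(\alpha_0\sigma^2)}$ and $\ell=1/(2k^2)$ appearing in the statement.

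For Parts~2 and~3 I first record the operator-norm bound $\|K\|,\|K_*\|\le k$. The bound on $\|K\|$ is part of the definition of $(k,\ell)$-strong stability, hence follows from Part~1, whose hypothesis $\Delta\le\epsilon_0$ holds because the thresholds in Parts~2 and~3 both include $\epsilon_0$; the bound $\|K_*\|\le k$ follows by applying Part~1 to the exact parameters $(A_*,B_*)$, for which $\Delta=0$. Next I linearize the quadratic map $K\mapsto KK^\top$ through the identity
\begin{equation*}
KK^\top-K_*K_*^\top=(K-K_*)K^\top+K_*(K-K_*)^\top,
\end{equation*}
which yields $\|KK^\top-K_*K_*^\top\|\le(\|K\|+\|K_*\|)\,\|K-K_*\|\le 2k\,\|K-K_*\|$. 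Applying Lemma~\ref{lemm} once more gives $\|K-K_*\|\le C_0\Delta$, and the hypothesis $\Delta\le\mu/(4kC_0)$ then forces $\|KK^\top-K_*K_*^\top\|\le\mu/2$. Since an operator-norm bound $\|M\|\le\mu/2$ on the symmetric matrix $M=KK^\top-K_*K_*^\top$ is equivalent to $-\tfrac{\mu}{2}I\preceq M\preceq\tfrac{\mu}{2}I$, both inequalities of Part~2 follow simultaneously. Part~3 is then the special case $\mu=\mu_*$ of Part~2: with $\mu_*$ a positive constant satisfying $K_*K_*^\top\succeq\mu_* I$ (available under the standing assumption $K_*K_*^\top\succ 0$ of Section~\ref{sec:Ahint}), Part~2 gives $KK^\top\succeq K_*K_*^\top-\tfrac{\mu_*}{2}I\succeq\tfrac{\mu_*}{2}I$.

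The genuinely substantive content is packaged inside Lemma~\ref{lemm}: the Lipschitz dependence $\|K-K_*\|\le C_0\Delta$ of the Riccati solution on the model parameters is what drives the whole argument, and reproving it would require the perturbation analysis of the discrete algebraic Riccati equation from~\cite{HM-ST-BR:19}. Granting that bound, the only point requiring care is the \emph{uniformity} of $\|K\|\le k$ over all admissible perturbations, so that the linearization constant $2k$ does not depend on the particular $(A,B)$; this is exactly what Part~1 supplies, after which the remaining steps are elementary. I therefore expect the passage in Part~2 from the scalar gain error to the two-sided semidefinite bound on $KK^\top$ to be the only place needing attention, and it becomes routine once the factoring identity above is in place.
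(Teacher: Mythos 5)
The paper itself gives no proof of this lemma: it is recalled verbatim from \cite[Lemma~42]{AC-AC-TK:2020}, so there is no in-paper argument to compare against. Your reconstruction is correct and is essentially the argument in the cited source: Part~1 via $J(K)\le J_*+C_0\Delta^2\le \nu+C_0\epsilon_0^2$ (Lemma~\ref{lemm}) fed into Lemma~\ref{Lem41}; Parts~2 and~3 via the uniform bound $\|K\|,\|K_*\|\le k$, the identity $KK^\top-K_*K_*^\top=(K-K_*)K^\top+K_*(K-K_*)^\top$, and the equivalence of $\|M\|\le\mu/2$ with $-\tfrac{\mu}{2}I\preceq M\preceq\tfrac{\mu}{2}I$ for symmetric $M$, with Part~3 as the instance $\mu=\mu_*=\lambda_{\min}(K_*K_*^\top)$. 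One small point worth recording: the appendix statement of Lemma~\ref{Lem41} in this paper reads $k=\frac{J}{\alpha_0\sigma^2}$, which is a typo for $k=\sqrt{J/(\alpha_0\sigma^2)}$ --- the square-root version you used is the correct one, as is confirmed internally by the relation $\ell=\frac{\alpha_0\sigma^2}{2J}=\frac{1}{2k^2}$, which holds only with $k^2=J/(\alpha_0\sigma^2)$.
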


\begin{lemma}\label{lem43}
Let $X_1$ and $X_2$ be two positive definite block matrices as follows.
\begin{align*}
X_1=\begin{pmatrix}
A_1 \,& B_1 \\
B_1^\top \,& C_1
\end{pmatrix} \qquad \qquad X_2=\begin{pmatrix}
A_2 \,& B_2 \\
B_2^\top \,& C_2
\end{pmatrix},
\end{align*}
If $X_1-X_2$ is positive semi-definite, then we have that 
\[
\big(C_1-B_1^\top A_1^{-1}B_1\big)-\big(C_2-B_2^\top A_2^{-1}B_2\big)\succeq 0\]
\end{lemma}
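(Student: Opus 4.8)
The plan is to use the variational characterization of the Schur complement as a partial minimization of the associated quadratic form; this converts the Loewner inequality $X_1\succeq X_2$ into a pointwise comparison of quadratic forms, from which monotonicity of the Schur complement is immediate. Write $S_i = C_i - B_i^\top A_i^{-1} B_i$ for the Schur complement of $A_i$ in $X_i$. Since $X_i\succ 0$, its leading principal block $A_i$ is positive definite, so $A_i^{-1}$ exists and each $S_i$ is well-defined.

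First I would establish, for each $i\in\{1,2\}$ and every vector $y$, the identity
\[
y^\top S_i\, y = \min_{x}\, \begin{pmatrix} x \\ y \end{pmatrix}^\top X_i \begin{pmatrix} x \\ y \end{pmatrix}.
\]
This follows by expanding the right-hand side as $x^\top A_i x + 2 x^\top B_i y + y^\top C_i y$ and minimizing over $x$: because $A_i\succ 0$, the quadratic in $x$ has the unique minimizer $x_\star = -A_i^{-1} B_i y$, and substituting it back collapses the expression to exactly $y^\top\big(C_i - B_i^\top A_i^{-1} B_i\big) y = y^\top S_i\, y$.

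Next, the hypothesis $X_1 - X_2 \succeq 0$ means that for every choice of $(x,y)$ of compatible dimension,
\[
\begin{pmatrix} x \\ y \end{pmatrix}^\top X_1 \begin{pmatrix} x \\ y \end{pmatrix} \;\geq\; \begin{pmatrix} x \\ y \end{pmatrix}^\top X_2 \begin{pmatrix} x \\ y \end{pmatrix}.
\]
Fixing $y$ and taking the minimum over $x$ on both sides—using that the minimum of a pointwise-larger function is at least the minimum of the smaller one, together with the identity above—yields $y^\top S_1\, y \geq y^\top S_2\, y$. Since this holds for all $y$, we conclude $S_1\succeq S_2$, which is precisely the claimed inequality $\big(C_1 - B_1^\top A_1^{-1} B_1\big) - \big(C_2 - B_2^\top A_2^{-1} B_2\big)\succeq 0$.

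There is no serious obstacle here; the only point requiring care is the well-definedness and attainment of the minimization, which is guaranteed by $A_i\succ 0$ (a consequence of $X_i\succ 0$). The remainder is the elementary observation that partial minimization of a quadratic form preserves the Loewner order.
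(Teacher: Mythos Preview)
Your proof is correct and is essentially the same argument as the paper's: the paper completes the square to write $(x,y)^\top X_i (x,y) = (x+A_i^{-1}B_i y)^\top A_i (x+A_i^{-1}B_i y) + y^\top S_i y$, then plugs in the specific choice $x=-A_1^{-1}B_1 y$ (your minimizer for the $X_1$ form) and drops the nonnegative $A_2$-term on the right. Your variational phrasing packages the same computation as ``partial minimization preserves the Loewner order,'' which is slightly cleaner but identical in content.
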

\begin{proof}
Note that for a positive definite block matrix we have that
\begin{align*}
\begin{pmatrix}
x^\top \,& y^\top
\end{pmatrix} \begin{pmatrix}
A \,& B\\
B^\top \,& C
\end{pmatrix}\begin{pmatrix}
x \\ y
\end{pmatrix}=(x+A^{-1}By)^\top A (x+A^{-1}By)+y^\top(C-B^\top A^{-1} B)y,
\end{align*}
for all real vectors $(x \ y)$. Given this and by assumption $X_1-X_2\succeq 0$, we have that
\begin{align*}
(x+A_1^{-1}B_1y)^\top A_1 (x+A_1^{-1}B_1y)+y^\top(C_1-B_1^\top A_1^{-1} B_1)y\geq \\
(x+A_2^{-1}B_2y)^\top A_2 (x+A_2^{-1}B_2y)+y^\top(C_2-B_2^\top A_2^{-1} B_2)y,
\end{align*}
Now for any $y$ and choosing $x=-A_1^{-1}B_1 y$, we have that
\begin{align*}
y^\top(C_1-B_1^\top A_1^{-1} B_1)y\geq \,&(-A_1^{-1}B_1 y+A_2^{-1}B_2y)^\top A_2 (-A_1^{-1}B_1 y+A_2^{-1}B_2y)\\\,&+y^\top(C_2-B_2^\top A_2^{-1} B_2)y\\
\geq \,& y^\top(C_2-B_2^\top A_2^{-1} B_2)y
\end{align*}
where we have used the fact that $A_2$ is positive definite. This implies the result.

\end{proof}

\section*{Appendix B.}\label{appdxC}
Here, we consider an alternative algorithm. In this algorithm, $\widehat{B}_{\tau_i}$ is not updated using Line 7 of Algorithm~\ref{alg}. Instead, at time $t=\tau_i$, an estimate $\widehat{B}_{\tau_i}$ is given to the controller which is in an $\epsilon$ neighbourhood of $B_*$, where $0\leq \epsilon\leq \frac{\epsilon_0}{4k}$. The controller receives the hint on the direction towards $B_*$ (Line 8 of Algorithm~\ref{alg}), updates $B_{\tau_i}$ by
\[
B_{\tau_i}=\widehat{B}_{\tau_i}-\gamma_i(\widehat{B}_{\tau_i}-B_*)+E_i,
\]
and the estimate $A_{\tau_i}$ using Line 9 of Algorithm~\ref{alg}. We assume that the system and control action are scalar, i.e., $n=m=1$. 
Hence, by assuming $1-\gamma_i\leq r^{-2i}$, $|E_i|^2\leq \frac{r^{-2i}}{4k\tau_i}$, and $|\widehat{B}_{\tau_i}-B_*|\leq \epsilon$, we have that
\begin{align}
|B_{\tau_i}-B_*|\leq &(1-\gamma_i)|\big(\widehat{B}_{\tau_i}-B_*\big)|+|E_i|\nonumber\\
\leq & \epsilon r^{-2i} + \frac{r^{-2i}}{4k\tau_i}\leq \frac{\epsilon_0}{2k} r^{-2i}\label{eqq92},
\end{align} 
and using Line 9 of Algorithm~\ref{alg}, we have
\begin{align*}
A_{\tau_i}=&\Big(\sum_{s=1}^{\tau_i-1}(x_{s+1}-B_{\tau_i}u_s)x_s\Big)\Big(\sum_{s=1}^{\tau_i-1}x_s^2+\lambda I_n\Big)^{-1}\\
=&\Big(\sum_{s=1}^{\tau_i-1}(A_*x_s+B_*u_s+w_s-B_{\tau_i}u_s)x_s\Big)V_{\tau_i}^{-1},
\end{align*}
where we have used~\eqref{eq102} and $V_{\tau_i}=\sum_{s=1}^{\tau_i-1}x_s^2+\lambda I_n$. Then we have
\begin{align*}
A_{\tau_i}=&(\sum_{s=1}^{\tau_i-1}A_*x_s^2)V_{\tau_i}^{-1}+(\sum_{s=1}^{\tau_i-1}(B_*-B_{\tau_i})u_s x_s)V_{\tau_i}^{-1} +(\sum_{s=1}^{\tau_i-1}w_s x_s)V_{\tau_i}^{-1}\\
=& A_*(\sum_{s=1}^{\tau_i-1}x_s^2)V_{\tau_i}^{-1}+(B_*-B_{\tau_i})(\sum_{s=1}^{\tau_i-1}u_s x_s)V_{\tau_i}^{-1}+(\sum_{s=1}^{\tau_i-1}w_s x_s)V_{\tau_i}^{-1}\\
=& A_*-\lambda A_* V_{\tau_i}^{-1}+(B_*-B_{\tau_i})(\sum_{s=1}^{\tau_i-1}u_s x_s)V_{\tau_i}^{-1}+(\sum_{s=1}^{\tau_i-1}w_s x_s)V_{\tau_i}^{-1}.
\end{align*}
Now using the controller policy $u_{s}=K_{0}x_{s}+\eta_{s}$ for $1\leq s\leq \tau_{1}-1$, and $u_{s}=K_{\tau_i}x_{s}$ for $\tau_i\leq s\leq \tau_{i+1}-1$, we have
\begin{align}
A_{\tau_i}=& A_*-\lambda A_* V_{\tau_i}^{-1}+(B_*-B_{\tau_i})(\sum_{r=1}^i\sum_{s=\tau_{r-1}}^{\tau_r-1}(K_{\tau_{r-1}}x_{s}+\eta_{s}) x_s)V_{\tau_i}^{-1}+(\sum_{s=1}^{\tau_i-1}w_s x_s)V_{\tau_i}^{-1}\nonumber\\
=& A_*-\lambda A_* V_{\tau_i}^{-1}+(B_*-B_{\tau_i})(\sum_{r=1}^i\sum_{s=\tau_{r-1}}^{\tau_r-1}K_{\tau_{r-1}}x_{s}x_s)V_{\tau_i}^{-1} +(B_*-B_{\tau_i})(\sum_{s=1}^{\tau_1-1}\eta_{s}x_s)V_{\tau_i}^{-1}\nonumber\\
&+(\sum_{s=1}^{\tau_i-1}w_s x_s)V_{\tau_i}^{-1}\nonumber\\
=& A_*-\lambda A_* V_{\tau_i}^{-1}+(B_*-B_{\tau_i})(\sum_{r=1}^i\sum_{s=\tau_{r-1}}^{\tau_r-1}K_{\tau_{r-1}}x_{s}x_s)V_{\tau_i}^{-1}\nonumber\\
&+\left(\sum_{s=1}^{\tau_1-1}(B_*-B_{\tau_i})\eta_{s}x_s+\sum_{s=1}^{\tau_i-1}w_s x_s\right)V_{\tau_i}^{-1}\label{eqq91}.
\end{align}

Defining $\Delta_{A_{\tau_i}}=A_{\tau_i}- A_*$ and $\Delta_{B_{\tau_i}}=B_{\tau_i}- B_*$, we have
\begin{align*}
\Delta_{A_{\tau_i}}=&-\lambda A_* V_{\tau_i}^{-1}+\Delta_{B_{\tau_i}}(\sum_{r=1}^i\sum_{s=\tau_{r-1}}^{\tau_r-1}K_{\tau_{r-1}}x_{s}x_s)V_{\tau_i}^{-1}+\left(\sum_{s=1}^{\tau_1-1}\Delta_{B_{\tau_i}}\eta_{s}x_s+\sum_{s=1}^{\tau_i-1}w_s x_s\right)V_{\tau_i}^{-1}.
\end{align*}

Using $K_{\tau_i}\leq k$, we have
\begin{align*}
|\Delta_{A_{\tau_i}}|&\leq |\lambda A_* V_{\tau_i}^{-1}|+k|\Delta_{B_{\tau_i}}|(\sum_{s=1}^{\tau_i-1}x_{s}x_s)V_{\tau_i}^{-1}+\left|\left(\sum_{s=1}^{\tau_1-1}\Delta_{B_{\tau_i}}\eta_{s}x_s+\sum_{s=1}^{\tau_i-1}w_s x_s\right)V_{\tau_i}^{-1}\right|\\
&\leq |\lambda A_* V_{\tau_i}^{-1}|+k|\Delta_{B_{\tau_i}}| + \lambda k |\Delta_{B_{\tau_i}}| V_{\tau_i}^{-1}+\left|\left(\sum_{s=1}^{\tau_1-1}\Delta_{B_{\tau_1}}\eta_{s}x_s+\sum_{s=1}^{\tau_i-1}w_s x_s\right)V_{\tau_i}^{-1}\right|\\
&\leq |\lambda A_* V_{\tau_i}^{-1}|+\big(\sum_{s=1}^{\tau_i-1}w_s x_s \big)V_{\tau_i}^{-1}+ |\Delta_{B_{\tau_i}}|(k+k\lambda V_{\tau_i}^{-1}) +|\Delta_{B_{\tau_1}}|(\sum_{s=1}^{\tau_1-1}\eta_{s}x_s)V_{\tau_i}^{-1}.
\end{align*}

Similar to Lemma~19 of \cite{AC-AC-TK:2020}, we can bound the first two term of the right-hand side and by using \eqref{eqq92}, we have that

\begin{align*}
|\Delta_{A_{\tau_i}}|^2&\leq \frac{2}{\tau_i}\left(640 n^2\log(3T)+80\frac{\lambda n \phi^2}{\sigma^2} \right)+ \frac{\epsilon_0^2}{2} r^{-2i}(1+\frac{80\lambda}{\sigma^2 \tau_i})\leq \epsilon_0^2 r^{-2i}.
\end{align*}
Using this, Lemma~\ref{lemm}, and following the same proof of \cite[Theorem~1]{AC-AC-TK:2020}, a logarithmic regret will be achieved.

% Note: in this sample, the section number is hard-coded in. Following
% proper LaTeX conventions, it should properly be coded as a reference:

%In this appendix we prove the following theorem from
%Section~\ref{sec:textree-generalization}:

%In this appendix we prove the following theorem from
%Section~6.2:
%
%\noindent
%{\bf Theorem} {\it Let $u,v,w$ be discrete variables such that $v, w$ do
%not co-occur with $u$ (i.e., $u\neq0\;\Rightarrow \;v=w=0$ in a given
%dataset $\dataset$). Let $N_{v0},N_{w0}$ be the number of data points for
%which $v=0, w=0$ respectively, and let $I_{uv},I_{uw}$ be the
%respective empirical mutual information values based on the sample
%$\dataset$. Then
%\[
%	N_{v0} \;>\; N_{w0}\;\;\Rightarrow\;\;I_{uv} \;\leq\;I_{uw}
%\]
%with equality only if $u$ is identically 0.} \hfill\BlackBox
%
%\noindent
%{\bf Proof}. We use the notation:
%\[
%P_v(i) \;=\;\frac{N_v^i}{N},\;\;\;i \neq 0;\;\;\;
%P_{v0}\;\equiv\;P_v(0)\; = \;1 - \sum_{i\neq 0}P_v(i).
%\]
%These values represent the (empirical) probabilities of $v$
%taking value $i\neq 0$ and 0 respectively.  Entropies will be denoted
%by $H$. We aim to show that $\fracpartial{I_{uv}}{P_{v0}} < 0$....\\
%
%{\noindent \em Remainder omitted in this sample. See http://www.jmlr.org/papers/ for full paper.}

\end{document}